\title{Edge stabilization in the homology of graph braid groups}
\author{Byung Hee An}
\address{Center for Geometry and Physics, Institute for Basic Science (IBS), Pohang 37673, Republic of Korea}
\email{anbyhee@ibs.re.kr}
\author{Gabriel C. Drummond-Cole}
\address{Center for Geometry and Physics, Institute for Basic Science (IBS), Pohang 37673, Republic of Korea}
\email{gabriel.c.drummond.cole@gmail.com}
\urladdr{drummondcole.com/gabriel/academic/}
\author{Ben Knudsen}
\address{Department of Mathematics, Harvard University, Cambridge 02138, USA}
\email{knudsen@math.harvard.edu}
\newtheorem*{formality}{Formality Theorem}
\newtheorem*{degreetheorem}{Degree Theorem}
\theoremstyle{definition}
\newtheorem{definition}{Definition}[section]
\newtheorem{notation}[definition]{Notation}
\newtheorem{example}[definition]{Example}
\newtheorem{construction}[definition]{Construction}
\newtheorem{observation}[definition]{Observation}
\theoremstyle{plain}
\newtheorem{proposition}[definition]{Proposition}
\newtheorem{lemma}[definition]{Lemma}
\newtheorem{corollary}[definition]{Corollary}
\newtheorem{theorem}[definition]{Theorem}
\theoremstyle{remark}
\newtheorem{remark}[definition]{Remark}
\newcommand{\gaps}{\mathcal{G}}
\newcommand{\localstates}[1]{S({#1})}
\newcommand{\reducedlocalstates}[1]{\widetilde{S}({#1})}
\newcommand{\intrinsic}[1]{S(#1)}
\newcommand{\fullyreduced}[1]{\widetilde{S}\left(#1\right)}
\newcommand{\starreduced}[1]{S^{\Yup}\left(#1\right)}
\newcommand{\sing}{\mathrm{sing}}
\newcommand{\Gph}{\mathcal{G}\mathrm{ph}}
\newcommand{\Conf}{\mathrm{Conf}}
\newcommand{\subd}{\mathcal{P}}
\DeclareMathOperator*{\hocolim}{\mathrm{hocolim}}
\DeclareMathOperator*{\Tor}{\mathrm{Tor}}
\DeclareMathOperator*{\colim}{\mathrm{colim}}
\newcommand{\longends}{\mathcal{L}}
\newcommand{\Mod}{\mathcal{M}\mathrm{od}}
\newcommand{\Wvring}{R_{W,v}}
\newcommand{\Wvcomplex}{C_{W,v}}
\newcommand{\fullyreducedsub}[2]{\widetilde{S}_{#1}(#2)}
\newcommand{\fullyreducedsubnoarg}[1]{\widetilde{S}_{#1}}
\newcommand{\growthdeg}[3]{D_{#1}^{#2}(#3)}
\newcommand{\Ramos}[2]{\Delta_{#1}^{#2}}
\newcommand{\essential}{V^{\mathrm{ess}}}
\definecolor{coloryellow}{RGB}{240,228,66}
\definecolor{colorskyblue}{RGB}{86,180,233}
\definecolor{colorvermillion}{RGB}{213,94,0}
\newcommand{\Agraph}{\graphfont{A}}
\newcommand{\graphfont}{\mathsf}
\newcommand{\handcuffgraph}{\graphfont{H}}
\newcommand{\thetagraph}[1]{\graphfont{\Theta}_{#1}}
\newcommand{\completegraph}[1]{\graphfont{K}_{#1}}
\newcommand{\intervalgraph}{\graphfont{I}}
\newcommand{\stargraph}[1]{\graphfont{S}_{#1}}
\newcommand{\graf}{\graphfont{\Gamma}}
\newcommand{\Xigraph}{\graphfont{\Xi}}
\newcommand{\lollipopgraph}[1]{\graphfont{L}_{#1}}
\newcommand{\figureeightgraph}{\graphfont{8}}
\newcommand{\cyclegraph}[1]{\graphfont{C}_{#1}}
\newcommand{\st}[1]{\mathrm{st}_{#1}}
\newcommand{\field}{\mathbb{F}}
\newsavebox{\foobox}
\newcommand{\slantbox}[3]
  {%
    \mbox
      {%
        \sbox{\foobox}{#3}%
        \hskip\wd\foobox
        \pdfsave
        \pdfsetmatrix{#1 #2 0 1}%
        \llap{\usebox{\foobox}}%
        \pdfrestore
      }%
  }
\begin{document}
\begin{abstract}
We introduce a novel type of stabilization map on the configuration spaces of a graph, which increases the number of particles occupying an edge.
There is an induced action on homology by the polynomial ring generated by the set of edges, and we show that this homology module is finitely generated.
An analogue of classical homological and representation stability for manifolds, this result implies eventual polynomial growth of Betti numbers. We calculate the exact degree of this polynomial, in particular verifying an upper bound conjectured by Ramos.
Because the action arises from a family of continuous maps, it lifts to an action at the level of singular chains, which contains strictly more information than the homology level action. We show that the resulting differential graded module is almost never formal over the ring of edges.
\end{abstract}
\maketitle
\section{Introduction}

Configuration spaces of manifolds have numerous applications in algebraic topology and homotopy theory---see Arnold~\cite{Arnold:CRGDB}, McDuff~\cite{McDuff:CSPNP}, and Cohen--Lada--May~\cite{CohenLadaMay:HILS} for a few notable examples. 
More recently, there has been a growing swell of interest in configuration spaces of \emph{graphs}. 
For a graph $\graf$---which is to say a finite, $1$-dimensional cell complex---we write \[B_k(\graf)=\left\{(x_1,\ldots, x_k)\in \graf^k: x_i\neq x_j\text{ if }i\neq j\right\}_{/\Sigma_k}\] for the $k$th unordered \emph{configuration space} of $\graf$. 
First considered by Ghrist~\cite{Ghrist:CSBGGR} from the point of view of robotics and motion planning, these spaces were shown by Abrams~\cite[Corollary 3.11]{Abrams:CSBGG} to be aspherical for $\graf$ connected and so classify their respective fundamental groups, the graph braid groups of $\graf$. 
Much effort has been dedicated to understanding the homological, geometric, and combinatorial properties of these groups by a number of different researchers and groups. A non-exhaustive list includes~%
\cite{AnDrummond-ColeKnudsen:SSGBG,ChettihLuetgehetmann:HCSTL,FarleySabalka:DMTGBG,FarleySabalka:OCRTBG,HarrisonKeatingRobbinsSawicki:NPQSG,KimKoPark:GBGRAAG,KoPark:CGBG,Luetgehetmann:RSCSG,LutgehetmannRecioMitter:TCCSFAGBG,MaciazekSawicki:HGP1CG,MaciazekSawicki:NAQSG,Ramos:SPHTBG,Sabalka:ORIPTBG,Swiatkowski:EHDCSG}.

\subsection{Stability phenomena} 
Configuration spaces of different cardinalities relate to one another in a variety of ways, and it is often simpler to study the graded space $B(-)=\bigsqcup_{k\geq0}B_k(-)$. 
For example, if $M$ is a manifold with non-empty boundary, there is a \emph{stabilization map} \[\sigma:B(M)\to B(M)\] which inserts a new particle near the boundary. 
This map increases the cardinality $k$ by $1$, and, for sufficiently large $k$, a theorem of McDuff asserts that the induced map $H_i(B_k(M))\to H_i(B_{k+1}(M))$ is an isomorphism \cite[Theorem 1.2]{McDuff:CSPNP}.
One says that the configuration spaces of such manifolds exhibit \emph{homological stability}.

One can define such a map on the configuration spaces of a graph, inserting a new particle at the end of a dangling edge, but stability almost never occurs.
In order to remedy this defect, we draw inspiration from a related situation in which homological stability fails, namely that of the \emph{ordered} configuration spaces $\Conf_k(M)$.
Although the Betti numbers of these spaces do not stabilize, the symmetric group representations occurring in homology do.
This phenomenon of \emph{representation stability}, as formalized by Church--Ellenberg--Farb, may be summarized in the statement that the collection $\{H_*(\Conf_k(M))\}_{k\geq0}$ is a \emph{finitely generated module} over a certain combinatorial category \cite[Theorem 6.2.1]{ChurchEllenbergFarb:FIMSRSG}.

In light of these results, a philosophy has emerged that a good notion of homological stability in a given context should be the property of finite generation with respect to some naturally occurring action.
For example, in hindsight, one interprets the theorem of McDuff mentioned above as a statement about the action of the polynomial ring $\mathbb{Z}[\sigma_*]$.

\subsection{Edge stabilization} In this paper, we investigate a new stability phenomenon in the homology $H_*(B(\graf))$, which takes the form of an action by the polynomial ring generated by the edges of $\graf$.
We prove the following analogue of the homological and representation stability enjoyed by configuration spaces of manifolds. 

\begin{theorem}\label{thm:finite generation}
Let $\graf$ be a graph with set of edges $E$.
For any $i\geq0$, the $\mathbb{Z}[E]$-module $H_i(B(\graf);\mathbb{Z})$ is finitely generated.
\end{theorem}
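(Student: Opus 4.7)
The plan is to produce a chain-level refinement of the statement: a chain complex $(C_*, d)$ of free $\mathbb{Z}[E]$-modules that is quasi-isomorphic (equivariantly for the edge action) to the singular chains on $B(\graf)$, and whose underlying $\mathbb{Z}[E]$-module is finitely generated in each homological degree. Given such a model, finite generation of $H_i(B(\graf);\mathbb{Z})$ is immediate: $\mathbb{Z}[E]$ is a polynomial ring in the finite set of variables $E$ and hence Noetherian, so submodules and subquotients of finitely generated $\mathbb{Z}[E]$-modules are again finitely generated.

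For the chain model, I would use a \'Swi\k{a}tkowski-type complex for $B(\graf)$. Its generators in each homological degree are indexed by a finite collection of combinatorial ``local states'' at the (essential) vertices of $\graf$---data specifying, at each such vertex, how incident particles are distributed among the half-edges---with the $\mathbb{Z}[E]$-action built in as the polynomial factor on the edges. Multiplication by an edge $e \in E$ should correspond topologically to inserting a particle into the interior of $e$, so that this algebraic action computes the homological edge stabilization $\sigma_{e,*}$. Because the vertex set of $\graf$ is finite, and in any fixed homological degree so is the number of admissible local states at each vertex, $C_i$ is in each degree a free $\mathbb{Z}[E]$-module of finite rank. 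The theorem then follows from the Noetherian argument above.

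The main technical obstacle is establishing the compatibility of the two ``edge actions''---the geometric one defined by the family of stabilization maps $\sigma_e$ on configuration spaces, and the algebraic one defined by polynomial multiplication on the small model. I would hope to obtain this by constructing a quasi-isomorphism to $C_*(B(\graf);\mathbb{Z})$ that is natural not merely in $\graf$ but also with respect to the edge-subdivision or edge-extension operations implementing $\sigma_e$, so that both edge actions arise from a single structural natural transformation. A secondary subtlety is the precise choice of small model: one expects to need a complex that is free (not just projective) over $\mathbb{Z}[E]$ and in which the action manifestly extends over the cardinality grading, which may require some care when $\graf$ has vertices of valence $\leq 2$ or no boundary.
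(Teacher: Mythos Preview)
Your proposal is correct and matches the paper's approach essentially verbatim: the paper uses the \'{S}wi\k{a}tkowski complex $S(\graf)$, which is by construction a finitely generated free $\mathbb{Z}[E]$-module, and deduces Theorem~\ref{thm:finite generation} from the Noetherian property of $\mathbb{Z}[E]$ together with Theorem~\ref{thm:module iso}, the isomorphism $H_*(B(\graf))\cong H_*(S(\graf))$ of $\mathbb{Z}[E]$-modules. The compatibility of the geometric and algebraic edge actions that you flag as the main technical obstacle is exactly the content of Theorem~\ref{thm:module iso}, and its proof---carried out in Sections~\ref{section:two isomorphisms} and~\ref{section:long ends and proof} via a comparison of the Abrams and \'{S}wi\k{a}tkowski cubical models---is indeed where the real work lies.
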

In fact, since finite generation and presentation are equivalent over Noetherian rings, the module is finitely presented. 
By a theorem of Hilbert, Theorem \ref{thm:finite generation} implies eventual polynomial growth of Betti numbers. 
Our next result gives the exact degree of this polynomial in terms of a certain connectivity invariant $\Ramos{\graf}{i}$, which is roughly the largest number of connected components of edges obtainable by removing $i$ essential vertices from $\graf$---see Definition \ref{def:equivalence relation} for a precise definition. In stating the result, we assume that $\graf$ is not a discrete graph, for which the question is trivial.

\begin{theorem}
\label{thm:polynomial growth}
Let $\field$ be a field and $\graf$ a graph with at least one edge. For $k$ sufficiently large, $\dim_\field H_i(B_k(\graf);\field)$ coincides with a polynomial in $k$ of degree $\Ramos{\graf}{i}-1$.
\end{theorem}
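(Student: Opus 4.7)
The plan is to identify the Hilbert polynomial of the finitely generated graded $\field[E]$-module $M := H_i(B(\graf);\field)$ via its Krull dimension, and then to compute that Krull dimension combinatorially in terms of $\Ramos{\graf}{i}$.

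\textbf{Step 1 (reduction to a Krull dimension computation).} By Theorem~\ref{thm:finite generation} and the universal coefficient theorem, $M$ is a finitely generated graded module over the polynomial ring $\field[E]$, where each edge variable carries grading weight one (corresponding to inserting one new particle). The classical Hilbert--Serre theorem then guarantees that $k \mapsto \dim_\field M_k$ agrees with a polynomial for $k \gg 0$, whose degree equals $\dim M - 1$, where $\dim M$ is the Krull dimension of $\mathrm{Supp}(M) \subseteq \mathrm{Spec}(\field[E])$. It therefore suffices to establish the equality $\dim M = \Ramos{\graf}{i}$.

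\textbf{Step 2 (upper bound).} Each associated prime of $M$ is homogeneous with respect to the standard grading and hence generated by a subset of the edges, so every irreducible component of $\mathrm{Supp}(M)$ is a coordinate subspace whose dimension is the number of edges not in the corresponding prime. I would argue from a chain-level Abrams/\'Swi\k atkowski-style model for $H_*(B(\graf))$---of the sort developed in the authors' earlier work---that any such coordinate subspace of dimension $d$ in the support records a collection of at most $i$ essential vertices whose deletion separates the surviving edges into at least $d$ edge-components, which by the definition recalled in Definition~\ref{def:equivalence relation} forces $d \leq \Ramos{\graf}{i}$.

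\textbf{Step 3 (lower bound).} For an optimal choice $W$ of (at most) $i$ essential vertices realizing $\Ramos{\graf}{i}$ edge-components, construct an explicit degree-$i$ cycle $\alpha_W$ as a product of local ``branching'' or ``star'' classes at the vertices of $W$. The task is then to verify that $\alpha_W$ is a nonzero homology class and that its multiples by monomials supported on the edges of the $\Ramos{\graf}{i}$ surviving components are linearly independent in $M$. This exhibits a coordinate subspace of dimension $\Ramos{\graf}{i}$ in $\mathrm{Supp}(M)$, matching the upper bound.

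\textbf{Main obstacle.} The principal difficulty will be the lower bound: showing that the candidate classes $\alpha_W$ are genuine cycles that are not boundaries, and that after restriction to the appropriate coordinate subspace (equivalently, localization at the prime of the remaining edges) the polynomial family $\{\alpha_W \cdot m\}_m$ is $\field$-linearly independent. The delicate point is to rule out collapsing against contributions from competing choices of essential vertices and from lower-depth combinatorics; I expect this to require either a careful filtration of the chain complex by the edge-component structure of $\graf \setminus W$, or a direct identification of the relevant localization with a simpler configuration-space homology computable in closed form by induction on the number of essential vertices.
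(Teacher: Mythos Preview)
Your Step~1 reformulation via Krull dimension is sound. The serious gap is in Step~2: the inference ``homogeneous, hence generated by a subset of the edges'' is simply false. Over $\field[E]$ with the standard total-weight grading there are many homogeneous primes that are not monomial---$(e_i-e_j)$, for instance---and these do occur here. Concretely, for $\completegraph{4}$ the paper records that $\dim_\field H_1(B_k(\completegraph{4});\field)=4$ for $k>1$, so $H_1(B(\completegraph{4});\field)$ has Krull dimension~$1$; but by the edge-transitive symmetry of $\completegraph{4}$ no single edge variable can annihilate this module, so the support is not a coordinate line (it is, up to radical, the diagonal $e_1=\cdots=e_6$). There is no $\mathbb{Z}^E$-multigrading on the \'Swi\k atkowski complex that would force associated primes to be monomial: already in the reduced complex $\partial h_{ij}=e(h_i)-e(h_j)$ mixes distinct edge variables. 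Thus your bridge from ``irreducible component of dimension $d$'' to ``$d$ edge-components after deleting $\le i$ essential vertices'' has no foundation, and the upper bound collapses.

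The paper obtains the upper bound (Proposition~\ref{proposition: degree upper bound}) by a completely different mechanism: it enlarges the problem to a family of complexes $\fullyreducedsub{W}{\graf;\field}$ indexed by $W\subseteq\essential$, establishes a two-row vertex-explosion spectral sequence (Lemma~\ref{lem:ss of bicomplex}) relating $W$ to $W\sqcup\{v\}$, and runs a downward induction on $|\essential\setminus W|$ together with a growth-control lemma (Lemma~\ref{lem:technical growth}). Your lower-bound sketch is close in spirit to the paper's $W$-tori, but note that the paper needs the auxiliary notion of a \emph{well-separating} $W$ and a rigidity criterion (Lemma~\ref{lemma:torus module}) to carry out the linear-independence step; a naive external product of star classes can fail, as in Example~\ref{example:non-rigid star class}.
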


First introduced and exploited as a purely algebraic phenomenon by Ramos in \cite[\textsection 3.2]{Ramos:SPHTBG} for trees and by the authors of the present paper~\cite[\textsection 4.2]{AnDrummond-ColeKnudsen:SSGBG} in general, this module structure has a simple topological origin.
The \emph{edge stabilization map} \[\sigma_e:B(\graf)\to B(\graf)\] acts by replacing the subconfiguration of particles on $e$ with the collection of averages of consecutive particles or endpoints---see Figure \ref{fig:edge stabilization} and Section \ref{section:topological edge action} below.

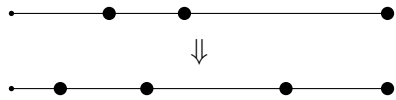
\begin{figure}[ht]
\begin{center}
\begin{tikzpicture}
\fill[black] (0,0) circle (1pt);
\fill[black] (5,0) circle (2.5pt);
\fill[black] (2.3,0) circle (2.5pt);
\fill[black] (1.3,0) circle (2.5pt);
\draw(0,0) -- (5,0);
\draw(2.5,-.2) node[below]{$\Downarrow$};
\begin{scope}[yshift=-1cm]
\fill[black] (0,0) circle (1pt);
\fill[black] (5,0) circle (2.5pt);
\fill[black] (3.65,0) circle (2.5pt);
\fill[black] (1.8,0) circle (2.5pt);
\fill[black] (.65,0) circle (2.5pt);
\draw(0,0) -- (5,0);
\end{scope}
\end{tikzpicture}
\end{center}
\caption{Edge stabilization}\label{fig:edge stabilization}
\end{figure}

\subsection{Edge formality} Because the various edge stabilization maps commute on the nose---not merely up to homotopy---the action of $\mathbb{Z}[E]$ on homology arises from a $\mathbb{Z}[E]$-action on singular chains.
This chain level structure is surprisingly rich; indeed, we show that it almost always carries strictly more information than the action on homology.

\begin{theorem}\label{thm:formality}
Let $\graf$ be a graph with set of edges $E$ and $R$ a commutative ring.
The singular chain complex of $B(\graf)$ with coefficients in $R$ is formal as a differential bigraded $R[E]$-module if and only if $\graf$ is a small graph.
\end{theorem}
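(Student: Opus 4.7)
The plan is to prove the two directions of the equivalence separately, with the forward (small $\Rightarrow$ formal) direction handled by a direct verification on a restricted class of graphs, and the reverse direction handled by constructing an explicit obstruction to formality, most naturally realized as a non-trivial Massey product in the appropriate sense for modules over a discrete ring.

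For the easy direction, I would use one of the known combinatorial chain models for $B(\graf)$---the \'Swi\k{a}tkowski complex or the Farley--Sabalka discrete Morse complex, both of which carry a natural $R[E]$-action induced by edge stabilization---and exhibit a quasi-isomorphism of differential bigraded $R[E]$-modules from $C_*(B(\graf); R)$ to $H_*(B(\graf); R)$. Since $R[E]$ is concentrated in homological degree zero, formality amounts to splitting the chain complex as a direct sum of acyclic summands and summands concentrated in a single homological degree, compatibly with the edge action and the cardinality bigrading. Under the smallness hypothesis, the combinatorics of the model are constrained enough that such a splitting can be constructed directly, and the conclusion then transfers to singular chains via the naturality of the comparison maps connecting the two models.

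For the hard direction, I would reduce to a finite list of minimal non-small graphs $\graf_0$ and, on each, construct an explicit Massey product obstruction of the form $\langle e_1, \alpha, e_2 \rangle$, where $e_1,e_2 \in E$ are edges and $\alpha \in H_*(B(\graf_0); R)$ satisfies $e_1 \cdot \alpha = 0 = e_2 \cdot \alpha$. If this bracket represents a nonzero class in the quotient $H_*(B(\graf_0); R)/(e_1 H_* + e_2 H_*)$, then formality of the chain-level $R[E]$-module fails, because in a formal module every such bracket must vanish. One then argues that this obstruction persists under any inclusion $\graf_0 \hookrightarrow \graf$ by invoking the functoriality of singular chains and the compatibility of edge stabilization with subgraph inclusion, together with the non-vanishing of the relevant homology classes on the larger graph---the latter following from the explicit growth estimates underpinning Theorem \ref{thm:polynomial growth}.

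The principal obstacle is the explicit computation of a non-trivial Massey product on each minimal non-small graph. Concretely, one must produce chain-level null-homotopies $n_1,n_2$ with $\partial n_1 = e_1 \cdot \alpha$ and $\partial n_2 = e_2 \cdot \alpha$, and then verify that the cycle $e_2 \cdot n_1 - e_1 \cdot n_2$ represents a nonzero class modulo the ideal $(e_1, e_2)$. The key computational tool is the concrete geometric description of edge stabilization as an averaging operation: the continuous one-parameter family realizing $\sigma_e$ (inserted into a prism construction) provides a canonical singular chain null-homotopy from $e \cdot \alpha$ to its terminal configuration, from which the Massey bracket can be assembled explicitly. The delicate point is showing that the resulting singular cycle is genuinely a non-boundary modulo $(e_1, e_2)$, which ultimately reduces to a finite-dimensional linear-algebra computation on the minimal example, and where the bulk of the labor is expected to lie.
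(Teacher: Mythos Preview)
Your ``if'' direction is essentially right in spirit, though vague; the paper makes it precise by reducing via smoothings and self-loop removal (Lemmas~\ref{lem:smoothing formality} and~\ref{lemma: reduce loops}) to disjoint unions of isolated vertices and intervals, on which the reduced \'{S}wi\k{a}tkowski complex has zero differential.

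Your ``only if'' direction has two genuine gaps. First, the obstruction you propose does not exist on the most basic large graph. Take $\graf_0=\stargraph{3}$: the reduced \'{S}wi\k{a}tkowski complex vanishes in degree $\ge 2$, so $H_1(B(\stargraph{3});R)$ is the module of degree-one cycles, which is free of rank one over $R[e_1,e_2,e_3]$, generated by the star class in weight $2$. There is therefore no nonzero class $\alpha$ with $e_i\alpha=0$, and your triple product $\langle e_1,\alpha,e_2\rangle$ is never defined. The paper's obstruction is of a different and more elementary shape: a \emph{paradoxically decomposable cycle}, namely a cycle $b\in IM$ (where $I=(E)$) whose homology class $[b]\notin IH_*(M)$. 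For $\stargraph{3}$ the standard star representative $a=e_1(h_2-h_3)+e_2(h_3-h_1)+e_3(h_1-h_2)$ is visibly in $IM$, while $[\alpha]$ is indecomposable for weight reasons (there is nothing in $H_1$ of weight $1$). This immediately contradicts the existence of an $R[E]$-linear map $M\to H_*(M)$ inducing the identity, hence rules out formality via Lemma~\ref{lemma:formality}.

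Second, your transfer step runs in the wrong direction. Pushing an obstruction forward along a graph embedding $\graf_0\hookrightarrow\graf$ is fragile: even if $f_*\alpha\ne 0$, the indeterminacy ideal $e_1H_*(B(\graf))+e_2H_*(B(\graf))$ is typically larger than the pushforward of the indeterminacy on $\graf_0$, and the growth estimates of Theorem~\ref{thm:polynomial growth} say nothing about this. The paper instead transfers \emph{backward}: it constructs an algebraic retraction (Lemma~\ref{lemma: surgery}) from any smooth large $\graf$ onto one of a short list of models---$\thetagraph{3}$, the $A$-graph, or a graph with a simple essential vertex---and then uses the trivial fact that formality passes to retracts (Lemma~\ref{lemma: retracts}). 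Non-formality of the models is handled directly by paradoxically decomposable cycles in top degree (Lemmas~\ref{lemma: trivalent is non-formal} and~\ref{lemma: simple vertex is non-formal}).
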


Here we declare a graph to be \emph{small} if, after smoothing as many bivalent vertices as possible, no vertex has three distinct edges---see Definition \ref{def:small graph}.
Small graphs are very primitive objects; indeed, each connected component of a small graph is an isolated vertex, an interval $\intervalgraph$, a cycle $\cyclegraph{}$, a lollipop $\lollipopgraph{}$, a figure-eight $\figureeightgraph$, or a handcuff $\handcuffgraph$. See Figure~\ref{figure: small graphs}.
\begin{figure}[ht]
\begin{center}
\begin{tikzpicture}
\begin{scope}[xshift=-2.5cm]
\fill[black] (0,0) circle (2.5pt);
\end{scope}
\begin{scope}[xshift=-1.5cm]
\fill[black] (0,.25) circle (2.5pt);
\fill[black] (0,-.25) circle (2.5pt);
\draw(0,-.25) -- (0,.25);
\draw(0,-1.2) node[below]{$\intervalgraph$};
\end{scope}
\begin{scope}[xshift=-.25cm]
\fill[black] (0,0) circle (2.5pt);
\draw(0,.25) circle (.25 cm);
\draw(0,-1.2) node[below]{$\cyclegraph{}$};
\end{scope}
\begin{scope}[xshift=1.25cm]
\fill[black] (0,0) circle (2.5pt);
\fill[black] (0,-.5) circle (2.5pt);
\draw(0,.25) circle (.25 cm);
\draw(0,-.5) -- (0,0);
\draw(0,-1.2) node[below]{$\lollipopgraph{}$};
\end{scope}
\begin{scope}[xshift=2.75cm]
\fill[black] (0,0) circle (2.5pt);
\draw(0,.25) circle (.25 cm);
\draw(0,-.25) circle (.25 cm);
\draw(0,-1.2) node[below]{$\figureeightgraph$};
\end{scope}
\begin{scope}[xshift=4.75cm]
\fill[black] (-.25,0) circle (2.5pt);
\fill[black] (.25,0) circle (2.5pt);
\draw(.5,0) circle (.25 cm);
\draw(-.5,0) circle (.25 cm);
\draw(-.25,0) -- (.25,0);
\draw(0,-1.2) node[below]{$\handcuffgraph$};
\end{scope}
\end{tikzpicture}
\end{center}
\caption{Homeomorphism types of connected small graphs}
\label{figure: small graphs}
\end{figure}
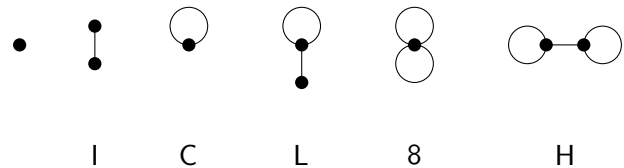

This result is a measure of the inherent complexity of the family of graph braid groups of $\graf$. It also provides an explanation for this complexity, as all non-formal behavior arises from variations on the following simple example. The configuration space of two points in a star graph with three edges is homotopy equivalent to a circle. The fundamental class of this circle is represented by a cycle where the two points orbit one another by taking turns passing through the central vertex. This cycle is decomposable over the ring of edges, but the class itself is indecomposable, and non-formality follows.

\subsection{Techniques and previous work}

We prove our theorems by appealing to a small and explicit chain complex $S(\graf)$ with an action of $\mathbb{Z}[E]$---see Theorem \ref{thm:module iso}---which functorially computes the homology and non-functorially models the singular chains of $B(\graf)$, both as $\mathbb{Z}[E]$-modules. In the proof of Theorem \ref{thm:polynomial growth}, we exploit a family of spectral sequences computing the homology of this complex---see Remark \ref{remark:ss} and Section \ref{section:growth}. These spectral sequences are an expansion of the ``vertex explosion'' technique introduced by the authors in \cite[Definition 5.14]{AnDrummond-ColeKnudsen:SSGBG}, and we expect them to prove useful in future computations.

The \'{S}wi\k{a}tkowski complex $S(\graf)$ first arose as the cellular chain complex of a cubical deformation retract of $B(\graf)$ \cite[\textsection 1]{Swiatkowski:EHDCSG} and was later derived independently by the authors of this paper from ideas involved in factorization homology and discrete Morse theory \cite[\textsection 4.2]{AnDrummond-ColeKnudsen:SSGBG}.
We use features of both approaches to $S(\graf)$, so we must compare them.
This comparison, which appears as Proposition \ref{prop:swiatkowski comparison}, is both a key technical step and a unification of perspectives.

For the case of configuration spaces of trees, Theorem~\ref{thm:polynomial growth} was proven by direct computation of all Betti numbers by Maci\k{a}\.{z}ek and Sawicki~\cite[Equation~(23)]{MaciazekSawicki:HGP1CG} and by Ramos~\cite[Theorem A]{Ramos:SPHTBG}, who also stated the upper bound half of the general case as a conjecture.

Ramos' methods involved a phenomenon similar to edge stabilization. In his work, the action of $\mathbb{Z}[E]$ is purely algebraic, occurring at the level of discrete Morse complexes, and limited to trees. We expect the two actions on homology to coincide.

\subsection{Questions} Our work invites the following questions.
\begin{enumerate}
\item Is there an analogue of edge stabilization for \emph{ordered} configuration spaces of graphs?
\end{enumerate}
L\"utgehetmann and Recio-Mitter recently constructed a stabilization map on ordered configuration spaces of graphs on an edge near an essential vertex~\cite[Proposition 5.6]{LutgehetmannRecioMitter:TCCSFAGBG} using other information at the vertex. 
The nature of the optimal algebraic structure organizing these maps remains unclear.
\begin{enumerate}[resume]
\item Are there analogues of edge stabilization for higher dimensional cells?
\end{enumerate}
One conceptual explanation for edge stabilization is that the unordered configuration spaces of an edge are all contractible, which guarantees that a certain extension problem is unobstructed. 
In the context of both of these questions, the corresponding extension problems are obstructed. 
This fact explains why L\"utgehetmann and Recio-Mitter's stabilization involves additional information beyond the choice of an edge.
It also indicates that stabilization for higher dimensional cells would likely have to involve a similar choice of further local information or, more radically, some kind of non-local invariants.
\begin{enumerate}[resume]
\item What is the significance of the invariant $\Ramos{\graf}{i}$? For example, is its role here connected to its appearance in the ``cut polynomials'' of right-angled Artin groups studied by Papadima and Suciu~\cite[\textsection 4.2]{PapadimaSuciu:AIRAAG}?
\item Is there a simple characterization, extending Theorem \ref{thm:formality} and Proposition \ref{prop:tails formality} below, of the subsets $E_0\subseteq E$ for which formality holds over $\mathbb{Z}[E_0]$?
\item Our proof of Theorem~\ref{theorem: degree theorem} shows that the $W$-tori of Definition~\ref{defi: W-torus} account for a positive fraction of the homology of configuration spaces of graphs---see Remark~\ref{remark: Universal Generators} for further discussion of this point. Can this estimate be improved and/or extended to other families of generators?
\end{enumerate}

\subsection{Linear outline} The paper following the introduction is divided into five sections. 
In Section \ref{section:edge stabilization}, we introduce edge stabilization and the \'{S}wi\k{a}tkowski complex, and we connect the two via the statement of Theorem \ref{thm:module iso}, which implies Theorem \ref{thm:finite generation}. 
Section \ref{section:growth} is concerned with the lower and upper bounds on growth necessary to establish Theorem \ref{thm:polynomial growth}, and Section \ref{section:formality} assembles the proof of Theorem \ref{thm:formality}. Finally, in Sections \ref{section:two isomorphisms} and \ref{section:long ends and proof}, we return to complete the proof of Theorem \ref{thm:module iso}, in the process forging a connection between the cellular models of Abrams and \'{S}wi\k{a}tkowski.

\subsection{Conventions}

Bigradings of modules are by \emph{degree} and \emph{weight}, and all are non-negative.
The braiding isomorphism for a tensor product of modules has a sign which depends on degree and not on weight: if $x$ and $y$ have degree $i$ and $j$, the braiding isomorphism takes $x\otimes y$ to $(-1)^{ij}y\otimes x$. 
We write $[m]$ for the degree shift functor by $m$ and $\{n\}$ for the weight shift functor by $n$ so that the degree $i$ and weight $j$ homogeneous component of $M[m]\{n\}$ is the degree $i-m$ and weight $j-n$ homogeneous component of $M$. 
In a differential graded context, differentials preserve weight.

The category $\Mod$ has objects pairs $(R,M)$ where $R$ is a weight-graded commutative unital ring and $M$ a differential bigraded $R$-module. 
A morphism in $\Mod$ between $(R_1,M_1)$ and $(R_2,M_2)$ is a pair $(f,g)$, where $f$ is a weight-graded ring morphism from $R_1$ to $R_2$ and $g$ is a differential bigraded $R_1$-module morphism from $M_1$ to $f^*M_2$ (that is, $M_2$ with the action $rm\coloneqq f(r)m$).

\subsection*{Acknowledgements}
The first two authors were supported by IBS\nobreakdash-R003\nobreakdash-D1. 
The third author was supported by NSF award 1606422.
The authors would like to thank Daniel L\"utgehetmann, Hyo Won Park, and Eric Ramos for illuminating conversations.

\section{Edge stabilization}\label{section:edge stabilization}

After establishing terminology and notation regarding graphs, we define the edge stabilization map $\sigma_e:B(\graf)\to B(\graf)$, where $e$ is an edge of the graph $\graf$.
At the level of homology, these maps give rise to an action by the polynomial ring generated by the edges of $\graf$.
In Theorem \ref{thm:module iso}, we present a small chain complex computing this homology, together with its module structure.

\subsection{A category of graphs}\label{section:graph conventions} A \emph{graph} $\graf$ is a finite $1$-dimensional CW complex.
Its $0$-cells and open $1$-cells are its \emph{vertices} and \emph{edges}, and the set of such is denoted $V(\graf)$ and $E(\graf)$, or simply $V$ and $E$, respectively.
A \emph{half-edge} is an end of an edge, and the set of such is denoted $H(\graf)$ or simply $H$.
The vertices of an edge $V(e)$ are the vertices contained in the closure of that edge in $\graf$.
The edges of a vertex $E(v)$ are the edges incident to the vertex $v$.
The half-edges of a vertex $H(v)$ (or of an edge $H(e)$) are the half-edges incident to $v$ (contained in $e$).
For $h$ in $H$, we write $v(h)$ and $e(h)$ for the corresponding vertex and edge.

The \emph{valence} of a vertex $v$ is the cardinality of $H(v)$, denoted $d(v)$.
The vertex $v$ is \emph{isolated} if $d(v)=0$ and \emph{essential} if $d(v)\ge 3$.
We shall sometimes write $V^{\geq2}$ and $\essential$ for the set of vertices of valence at least 2 and the set of essential vertices, respectively.
An edge with a $1$-valent vertex is a \emph{tail}.
A \emph{self-loop} at a vertex is an edge whose entire boundary is attached at that vertex.

\begin{example}
The cone on $\{1,\ldots, n\}$ is a graph $\stargraph{n}$ with $n+1$ vertices, with one of valence $n$ and $n$ of valence 1. 
These graphs are called \emph{star graphs} and the cone point the \emph{star vertex}. 
\end{example}

\begin{figure}[ht]
\begin{center}
\begin{tikzpicture}
\begin{scope}[xshift=-2cm]
\fill[black] (0,-.5) circle (2.5pt);
\fill[black] (0,-1) circle (2.5pt);
\draw(0,-1) -- (0,-.5);
\draw(0,-1.2) node[below]{$\stargraph{1}$};
\end{scope}
\fill[black] (0,-.5) circle (2.5pt);
\fill[black] (0,-1) circle (2.5pt);
\fill[black] (0,0) circle (2.5pt);
\draw(0,-1) -- (0,0);
\draw(0,-1.2) node[below]{$\stargraph{2}$};
\begin{scope}[xshift=2.5cm]
\begin{scope}[yshift=-.5cm]
\fill[black] (0,0) circle (2.5pt);
\fill[black] (0,-.5) circle (2.5pt);
\fill[black] (.433,.25) circle (2.5pt);
\fill[black] (-.433,.25) circle (2.5pt);
\draw(0,0) -- (0,-.5);
\draw(0,0) --(.433,.25);
\draw(0,0) --(-.433,.25);
\end{scope}
\draw(0,-1.2) node[below]{$\stargraph{3}$};
\end{scope}
\begin{scope}[xshift=5.5cm]
\begin{scope}[yshift=-.5cm]
\fill[black] (0,0) circle (2.5pt);
\fill[black] (.358,.358) circle (2.5pt);
\fill[black] (.358,-.358) circle (2.5pt);
\fill[black] (-.358,.358) circle (2.5pt);
\fill[black] (-.358,-.358) circle (2.5pt);
\draw(-.358,-.358) -- (.358,.358);
\draw(-.358,.358) -- (.358,-.358);
\end{scope}
\draw(0,-1.2) node[below]{$\stargraph{4}$};
\end{scope}
\end{tikzpicture}
\end{center}
\caption{Star graphs}
\end{figure}
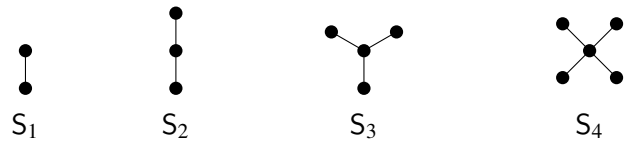

A \emph{parametrization} of a graph $\graf$ is a set of homeomorphisms $D_h:e(h)\to (0,5)$ for $h\in H$ such that
\begin{enumerate}
\item $D_h$ maps the $h$ end of $e(h)$ to the $0$ end of $(0,5)$, and
\item if $h_1\neq h_2\in H(e)$, then $D_{h_2}=5-D_{h_1}$.
\end{enumerate} 
Making a choice of parametrization does not affect the homeomorphism type of configuration spaces.
We will sometimes implicitly identify an edge of a parametrized graph with the interval $(0,5)$.
Up to homotopy, all constructions on parametrized graphs will be independent of the choice of parametrization.

\begin{definition}
Let $f:\graf_1\to \graf_2$ be a continuous map between graphs.
We say that $f$ is a \emph{graph morphism} if 
\begin{enumerate}
\item the inverse image $f^{-1}(V(\graf_2))$ is contained in $V(\graf_1)$ and 
\item the map $f$ is injective.
\end{enumerate}
We call a graph morphism a \emph{smoothing} if it is a homeomorphism and a \emph{graph embedding} if it preserves vertices.
A graph morphism can be factored into a graph embedding followed by a smoothing. At times, we may refer to the inverse of a smoothing as a \emph{subdivision}, and we caution the reader that these are typically not graph morphisms.

The composite of graph morphisms is a graph morphism, and we obtain in this way a category $\Gph$.
Although the objects of $\Gph$ are simply finite 1-dimensional CW complexes, not all morphisms are cellular.
A \emph{subgraph} is the image of a graph embedding.
\end{definition}
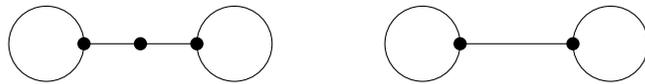
\begin{figure}[ht]
\begin{center}
\begin{tikzpicture}
\fill[black] (0,0) circle (2.5pt);
\fill[black] (1.5,0) circle (2.5pt);
\fill[black] (.75,0) circle (2.5pt);
\draw(0,0) -- (1.5,0);
\draw(-.5,0) circle (.5cm);
\draw(2,0) circle (.5cm);
\begin{scope}[xshift=5cm]
\fill[black] (0,0) circle (2.5pt);
\fill[black] (1.5,0) circle (2.5pt);
\draw(0,0) -- (1.5,0);
\draw(-.5,0) circle (.5cm);
\draw(2,0) circle (.5cm);
\end{scope}
\end{tikzpicture}
\end{center}
\caption{There is a graph morphism (in fact a smoothing) from left to right but not from right to left}
\end{figure}

Since graph morphisms are injective, they induce maps at the level of configuration spaces.
Thus, it is natural to view $H_*(B(-))$ as a functor from the category $\Gph$ to bigraded Abelian groups, where the weight grading records the cardinality of a configuration.
As we will see in the following section, there is more structure to be found.

We close this section with the following pair of definitions premised on our notion of a smoothing.

\begin{definition}\label{def:small graph}
Let $\graf$ be a graph. 
\begin{enumerate}
\item We say that $\graf$ is \emph{smooth} if every smoothing with domain $\graf$ is an isomorphism.
\item We say that $\graf$ is \emph{small} if, in any maximal smoothing of $\graf$, there is no vertex with three distinct edges. Otherwise, $\graf$ is \emph{large}.
\end{enumerate}
\end{definition}

Smoothness is almost, but not quite, equivalent to having no bivalent vertices; indeed, the cycle $\cyclegraph{}$ (see Figure~\ref{figure: small graphs}) is smooth.

\subsection{Topological edge action}\label{section:topological edge action}

We now introduce the promised stabilization.

\begin{definition}\label{def:edge stabilization}
Let $\graf$ be a parametrized graph and $e$ an edge. \emph{Edge stabilization} at $e$ is the map $\sigma_e:B(\graf)\to B(\graf)$ that preserves partial configurations in the complement of $(2,3)\subseteq e$ and replaces the partial configuration $\{x_1\leq \cdots\leq x_j\}\subseteq {(2,3)}$ with
\[
\left\{\frac{2+x_1}{2},\frac{x_1+x_2}{2},\ldots, \frac{x_{j-1}+x_j}{2},\frac{x_j+3}{2}\right\}.
 \] 
\end{definition}

It is a direct verification that the map $\sigma_e$ is continuous and independent of parametrization up to homotopy.
See Figure \ref{fig:edge stabilization redux} for a depiction of this map.

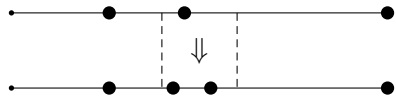
\begin{figure}[ht]
\begin{center}
\begin{tikzpicture}
\fill[black] (0,0) circle (1pt);
\fill[black] (5,0) circle (2.5pt);
\fill[black] (2.3,0) circle (2.5pt);
\fill[black] (1.3,0) circle (2.5pt);
\draw(0,0) -- (5,0);
\draw(2.5,-.2) node[below]{$\Downarrow$};
\draw[densely dashed] (2,0)--(2,-1);
\draw[densely dashed] (3,0)--(3,-1);

\begin{scope}[yshift=-1cm]
\fill[black] (0,0) circle (1pt);
\fill[black] (5,0) circle (2.5pt);
\fill[black] (2.15,0) circle (2.5pt);
\fill[black] (2.65,0) circle (2.5pt);
\fill[black] (1.3,0) circle (2.5pt);
\draw(0,0) -- (5,0);
\end{scope}
\end{tikzpicture}
\end{center}
\caption{Edge stabilization redux}\label{fig:edge stabilization redux}
\end{figure}

\begin{remark}
Parametrizing edges by $(0,5)$ and stabilizing in the subinterval $(2,3)$ are choices designed to interface well with the arguments of Section \ref{section:two isomorphisms}. 
The more obvious choices of parametrizing edges by $(0,1)$ and stabilizing in this entire interval (see Figure~\ref{fig:edge stabilization}) lead to maps that differ only up to homotopy.
\end{remark}

\begin{remark}
Stabilization by adding points to the tails of a graph has been considered~\cite[\textsection 5]{FarleySabalka:OCRTBG}, and addition of points near the boundary of a manifold is a well-studied phenomenon---see~\cite[\textsection 4]{McDuff:CSPNP}, for example.
The existence of stabilization maps at internal edges is new, but see \cite[\textsection 3.2]{Ramos:SPHTBG} for a related algebraic stabilization mechanism in the context of trees.
\end{remark}

Passing to homology, we obtain an action of the weight graded ring $\mathbb{Z}[E]$ on $H_*(B(\graf))$.
This action is natural in the sense that a graph morphism from $\graf_1$ to $\graf_2$ induces a commutative diagram 
\[
\begin{tikzcd}
\mathbb{Z}[E(\graf_1)]\otimes H_*(B(\graf_1))\arrow[d]\arrow[r]&\mathbb{Z}[E(\graf_2)]\otimes H_*(B(\graf_2))\ar[d]\\
H_*(B(\graf_2))\ar[r]&H_*(B(\graf_2)).
\end{tikzcd}
\]
In this way, the homology of configuration spaces of graphs lifts to a functor $H_*(B(-)):\Gph\to \Mod$.

\subsection{The \'{S}wi\k{a}tkowski complex}\label{section:swiatkowski}

We now present a convenient chain model for $H_*(B(\graf))$, thought of as a functor valued in $\Mod$ via edge stabilization.

\begin{definition}\label{def:swiatkowski complex}
Let $\graf$ be a graph and let $R$ be a commutative ring.
For $v\in V$, set $\localstates{v}=\mathbb{Z}\langle \varnothing,v, h\in H(v)\rangle.$
The \emph{\'{S}wi\k{a}tkowski complex} of $\graf$ (with coefficients in $R$) is the $R[E]$-module 
\[\intrinsic{\graf;R}=R[E]\otimes_{\mathbb{Z}}\bigotimes_{v\in V}\localstates{v},\] 
endowed with the bigrading $|\varnothing|=(0,0)$, $|v|=|e|=(0,1)$, and $|h|=(1,1)$, together with the differential $\partial$ determined by setting $\partial(h)=e(h)-v(h).$ 
\end{definition}

Typically, the ring $R$ will be $\mathbb{Z}$ or a field. When the coefficient ring is $\mathbb{Z}$, we omit it from the notation. 
\begin{remark}\label{remark:heuristic}
The following geometric heuristic is often helpful in dealing with the \'{S}wi\k{a}tkowski complex. 
\begin{enumerate}
\item We think of a generator of $\localstates{v}$ as prescribing the local ``state'' of a configuration near $v$. Specifically, the generator $\varnothing$ represents the absence of a particle at $v$, the generator $v$ represents the presence of a stationary particle at $v$, and the generator $h\in H(v)$ represents an infinitesimal path of a particle exiting $v$ in the direction of $h$. 
See Figure~\ref{figure: local states}.
\item We think of a monomial in $\mathbb{Z}[E]$ as prescribing the state of a configuration on the edges of $\graf$; for example, the generator $e_1e_2^2$ represents the presence of a stationary particle on $e_1$ and two stationary particles on $e_2$. 
See Figure~\ref{figure: local states}.
\item A basis element of the \'{S}wi\k{a}tkowski complex is a prescription of a global state, which is to say that we choose a local state from each $\localstates{v}$ and populate the edges with a monomial in $E$. 
\item The first grading is the natural homological grading, since $h\in H$ is a path and all other generators involve stationary particles, and the second grading is given by counting the number of particles. 
\item The differential takes a path to the difference of its endpoints.
\end{enumerate}
\end{remark}
\begin{figure}[ht]
\begin{center}
\begin{tikzpicture}
\begin{scope}
\fill[black] (0,0) circle (1pt);
\draw(-.707,-.707) -- (.707,.707);
\draw(-.707,.707) -- (.707,-.707);
\draw(0,-1) node[below]{$\varnothing$};
\end{scope}
\begin{scope}[xshift=3cm]
\fill[black] (0,0) circle (2.5pt);
\draw(-.707,-.707) -- (.707,.707);
\draw(-.707,.707) -- (.707,-.707);
\draw(0,-1) node[below]{$v$};
\end{scope}
\begin{scope}[xshift=6cm]
\fill[black] (0,0) circle (2.5pt);
\draw(-.707,-.707) -- (.707,.707);
\draw[line width=2] (0,0) --(.4,-.4);
\fill[black] (.4,-.4) circle (2.5pt);
\draw[->] (.175,.025)--(.425,-.225);
\draw(-.707,.707) -- (.707,-.707);
\draw(0,-1) node[below]{$h$};
\end{scope}
\begin{scope}[xshift=9cm]
\draw(-.707,.707) -- (.707,.707);
\fill[black] (0,.707) circle (2.5pt);
\draw(-.707,-.707) -- (.707,-.707);
\fill[black] (.472,-.707) circle (2.5pt);
\fill[black] (-.472,-.707) circle (2.5pt);
\draw(0,.707) node[below]{$e_1$};
\draw(0,-.707) node[below]{$e_2^2$};
\end{scope}
\end{tikzpicture}
\end{center}
\caption{Local states near a vertex $v$ and the state $e_1e_2^2$}
\label{figure: local states}
\end{figure}
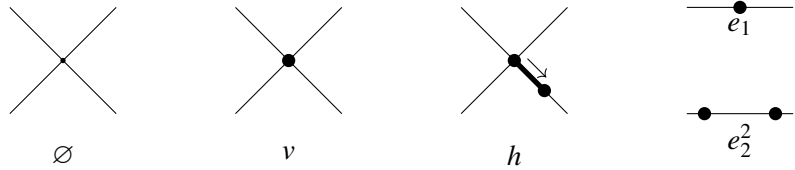

\begin{notation}
We systematically omit all factors of $\varnothing$ and all tensor symbols from the notation, and we regard half-edge generators at different vertices as permutable up to sign. Thus, if $\graf$ is the interval with vertices $0$ and $1$, half-edges $h_0$ and $h_1$, and edge $e$, then we write \begin{align*}
e^3\otimes \varnothing\otimes \varnothing &=e^3\\
e^3\otimes \varnothing\otimes h_2 &=e^3h_2\\
e^3\otimes h_1\otimes h_2 &=e^3h_1h_2=-e^3h_2h_1.
\end{align*}
See Figure~\ref{figure: interval example}.
\begin{figure}[ht]
\begin{center}
\begin{tikzpicture}
\begin{scope}
\fill[black] (-2.5,0) circle (1pt);
\fill[black] (2.5,0) circle (1pt);
\fill[black] (-.5,0) circle (2.5pt);
\fill[black] (0,0) circle (2.5pt);
\fill[black] (.5,0) circle (2.5pt);
\draw(-2.5,0) -- (2.5,0);
\draw(0,0) node[below]{$e^3$};
\end{scope}
\begin{scope}[yshift=-1cm]
\fill[black] (-2.5,0) circle (1pt);
\fill[black] (2.5,0) circle (1pt);
\fill[black] (-.5,0) circle (2.5pt);
\fill[black] (0,0) circle (2.5pt);
\fill[black] (.5,0) circle (2.5pt);
\fill[black] (1.5,0) circle (2.5pt);
\fill[black] (2.5,0) circle (2.5pt);
\draw(-2.5,0) -- (2.5,0);
\draw[line width=2](1.5,0) -- (2.5,0);
\draw(0,0) node[below]{$e^3h_2$};
\end{scope}
\begin{scope}[yshift=-2cm]
\fill[black] (-2.5,0) circle (1pt);
\fill[black] (2.5,0) circle (1pt);
\fill[black] (-.5,0) circle (2.5pt);
\fill[black] (0,0) circle (2.5pt);
\fill[black] (.5,0) circle (2.5pt);
\fill[black] (1.5,0) circle (2.5pt);
\fill[black] (2.5,0) circle (2.5pt);
\fill[black] (-1.5,0) circle (2.5pt);
\fill[black] (-2.5,0) circle (2.5pt);
\draw(-2.5,0) -- (2.5,0);
\draw[line width=2](1.5,0) -- (2.5,0);
\draw[line width=2](-1.5,0) -- (-2.5,0);
\draw(0,0) node[below]{$e^3h_1h_2$};
\end{scope}
\end{tikzpicture}
\end{center}
\caption{Three global states on the interval}
\label{figure: interval example}
\end{figure}
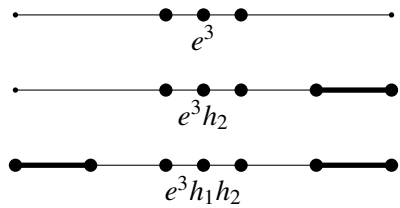
\end{notation}

A graph morphism $f:\graf_1\to\graf_2$ determines a map $\intrinsic{f;R}:\intrinsic{\graf_1;R}\to \intrinsic{\graf_2;R}$ as follows~\cite[\textsection 4.2]{AnDrummond-ColeKnudsen:SSGBG}: an edge is sent to its image under $f$; for a vertex $v$ such that $f(v)$ is also a vertex, there is an evident map $\localstates{v}\to \localstates{f(v)}$; and, for a vertex such that $f(v)$ is an edge, we instead use the map $\localstates{v}\to\mathbb{Z}[f(v)]$ sending $\varnothing$ to $1$, $v$ to $e$, and half-edges to $0$. 
This map respects the bigrading, differential, and module structures, so we obtain a functor $\intrinsic{-;R}:\Gph\to\Mod$.
Since $\partial$ is $R[E]$-linear by definition, this module structure descends to homology. 

\begin{theorem}\label{thm:module iso}
There is a natural isomorphism \[H_*(B(\graf);R)\cong H_*(\intrinsic{\graf;R})\] of functors from $\Gph$ to $\Mod$.
\end{theorem}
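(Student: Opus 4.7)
The overall plan is to construct a natural, chain-level isomorphism of $R[E]$-modules relating the singular chains of $B(\graf)$ (with action induced by the $\sigma_e$) to the \'{S}wi\k{a}tkowski complex $\intrinsic{\graf; R}$ (with its algebraic $R[E]$-action from Definition \ref{def:swiatkowski complex}), and then pass to homology. The key leverage comes from the two available descriptions of $\intrinsic{\graf; R}$---as the cellular chain complex of \'{S}wi\k{a}tkowski's cubical deformation retract of $B(\graf)$, and as the algebraic model derived in the authors' prior work from factorization homology and discrete Morse theory---each of which makes a different piece of structure manifest.

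I would begin by recalling that, after sufficient subdivision of $\graf$, the configuration space $B_k(\graf)$ admits a CW deformation retract $D_k(\graf)$ whose cells are indexed by combinatorial data: a set of ``moving'' edges equipped with directions, a set of occupied vertices disjoint from the moving edges, and a distribution of static particles on the remaining edges. This indexing set is in natural bijection with the tensor basis of $\intrinsic{\graf; R}$ from Definition \ref{def:swiatkowski complex}: a half-edge generator $h$ at $v$ encodes a moving edge oriented away from $v$, the vertex generator $v$ encodes an occupied vertex, and a monomial in $R[E]$ records the distribution of static particles on non-moving edges. A direct verification of faces then recovers the differential $\partial h = e(h)-v(h)$.

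Next I would promote edge stabilization to a cellular self-map of this model and verify that, under the bijection above, it coincides on the nose with multiplication by $e \in R[E]$. The specific parametrization chosen in Definition \ref{def:edge stabilization}, in which the new particle is inserted inside the subinterval $(2,3)\subseteq e$, is engineered for exactly this purpose: the map preserves the CW structure and sends a cell with $n$ static particles on $e$ to the cell with $n+1$ static particles, without affecting the state near any vertex or the orientations of any moving edge. Naturality in $\Gph$ then follows because a graph morphism $f$ induces a cellular map between suitable deformation retracts which, under the bijection, matches $\intrinsic{f; R}$ cell by cell.

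The main obstacle is one of reconciliation. The Abrams--\'{S}wi\k{a}tkowski cubical model depends on a subdivision of $\graf$ whose coarseness is controlled by $k$, while edge stabilization raises $k$; one must therefore organize the cell structures coherently across all weights so that each $\sigma_e$ admits a single cellular representative. At the same time, \'{S}wi\k{a}tkowski's original cellular description and the factorization-homological construction of $\intrinsic{\graf; R}$ must be shown to produce not merely isomorphic chain complexes but isomorphic differential bigraded $R[E]$-modules, with matching signs and naturality. Carrying out this reconciliation---the substance of the promised Proposition \ref{prop:swiatkowski comparison} and of the ``connection between the cellular models of Abrams and \'{S}wi\k{a}tkowski'' flagged in the outline---is where the bulk of the work lies.
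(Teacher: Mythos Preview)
Your broad strategy---identify the cellular chains of the cubical deformation retract with $S(\graf)$, check that edge stabilization acts cellularly as multiplication by $e$, and then invoke the comparison of Proposition \ref{prop:swiatkowski comparison}---matches the paper's. But your account of where \emph{naturality} comes from is the gap. You write that ``a graph morphism $f$ induces a cellular map between suitable deformation retracts which, under the bijection, matches $\intrinsic{f; R}$ cell by cell.'' This is false for the \'{S}wi\k{a}tkowski model $UK(\graf)$: the retraction $\rho:B(\graf)\to UK(\graf)$ depends on a parametrization, and the paper explicitly warns (immediately after Corollary \ref{cor:chain level comparison}) that the resulting quasi-isomorphism is \emph{not} natural. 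There is no functorial cellular map realizing $S(f)$ on the cubical side.

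The paper's logic is therefore reversed from yours. Naturality is already in hand from Theorem \ref{thm:ADCK}, established in the authors' prior work via the colimit-over-subdivisions and gap-localization machinery; what remains is to show that this \emph{pre-existing} natural isomorphism is $\mathbb{Z}[E]$-linear. The cubical model supplies a \emph{second}, non-natural isomorphism $H_*(B(\graf))\cong H_*(S(\graf))$ on which $\mathbb{Z}[E]$-linearity is easy (your second paragraph), and Proposition \ref{prop:swiatkowski comparison} asserts that these two isomorphisms coincide---so the natural one inherits linearity. Your final paragraph also misdiagnoses the obstacle: there is no subdivision-coherence issue for $\sigma_e$, since $UK(\graf)$ is a single complex valid for all weights; the genuine work in Proposition \ref{prop:swiatkowski comparison} is comparing two different zig-zags from $C^\sing(B(\graf))$ to $S(\graf)$, and this is carried out via subdivisions with ``long ends'' on which the Abrams-to-\'{S}wi\k{a}tkowski map is cellular.
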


At the level of bigraded Abelian groups, this natural isomorphism was established in \cite[Theorem 4.5]{AnDrummond-ColeKnudsen:SSGBG}---see Theorem \ref{thm:ADCK} below.
To conclude the full statement over $\mathbb{Z}$, we will check that this isomorphism is compatible with the respective $\mathbb{Z}[E]$-actions, a task which we take up in Section \ref{section:two isomorphisms} below. The general case follows by the universal coefficients theorem.

Since $\intrinsic{\graf}$ is finitely generated over $\mathbb{Z}[E]$ by definition, and since $\mathbb{Z}[E]$ is Noetherian, Theorem \ref{thm:finite generation} follows immediately from Theorem~\ref{thm:module iso}. 

We close this section with an introduction of a smaller variant of the \'{S}wi\k{a}tkowski complex, which is often more convenient.

\begin{definition}\label{definition: reduced intrinsic definition}
Let $\graf$ be a graph.
For each $v\in V$, let $\reducedlocalstates{v}\subseteq \localstates{v}$ be the subspace spanned by $\varnothing$ and the differences $h_{ij}\coloneqq{}h_i-h_j$ of half-edges.
The \emph{reduced \'{S}wi\k{a}tkowski complex} with coefficients in $R$ is \[\fullyreduced{\graf;R}\coloneqq R[E]\otimes_{\mathbb{Z}} \bigotimes_{v\in V}\reducedlocalstates{v},\] considered as a subcomplex and submodule of $\intrinsic{\graf;R}$.
To be explicit, the differential is determined by $\partial(h_{ij})=e(h_i)-e(h_j)$.
\end{definition}

The inclusion $\fullyreduced{\graf;R}\subseteq S(\graf;R)$ is an $R[E]$-linear quasi-isomorphism. The $R=\mathbb{Z}$ case is~\cite[Proposition 4.9]{AnDrummond-ColeKnudsen:SSGBG}, and the general case follows by the universal coefficients theorem.
Note that $\fullyreduced{-;R}$ is functorial for graph morphisms. 

\subsection{Vertex explosion and star classes}\label{section:tools}
In this section, we review some tools from \cite[\textsection 5]{AnDrummond-ColeKnudsen:SSGBG} afforded by the \'{S}wi\k{a}tkowski complex, which will play an important role in the proofs of Theorems \ref{thm:polynomial growth} and \ref{thm:formality}. The first of these is an exact sequence which is useful in reducing computations of $H_*(B(\graf))$ to computations for simpler graphs. 

\begin{definition}
For $v\in V$, we write $\graf_v$ for the \emph{vertex explosion} of $\graf$ at $v$, which is the graph obtained by 
\begin{enumerate}
\item replacing the vertex $v$ with $\{v\}\times H(v)$ and 
\item modifying the attaching maps for half-edges at $v$ by attaching $h$ to $(v,h)$.
\end{enumerate}
\end{definition}
There is a graph morphism from $\graf_v$ to $\graf$ which takes each edge to itself, takes the vertex $(v, h)$ to $e(h)$, and takes each other vertex to itself.
Defining this morphism requires choices of precisely where in $e(h)$ to send $(v,h)$, but the isotopy class of this graph morphism is unique.
See Figure~\ref{figure: vertex explosion}.
\begin{figure}[ht]
\begin{center}
\begin{tikzpicture}
\begin{scope}
\fill[black] (0,0) circle (2.5pt);
\draw(-.707,-.707) -- (.707,.707);
\draw(-.707,.707) -- (.707,-.707);
\draw(0,-1.2) node[below]{$\graf$};
\draw(2,0) node{$\xleftarrow{\text{smoothing}}$};
\draw(6,0) node{$\xleftarrow{\text{graph embedding}}$};
\end{scope}
\begin{scope}[xshift=4cm]
\fill[black] (0,0) circle (2.5pt);
\fill[black] (-.3,-.3) circle (2.5pt);
\fill[black] (-.3,.3) circle (2.5pt);
\fill[black] (.3,-.3) circle (2.5pt);
\fill[black] (.3,.3) circle (2.5pt);
\draw(-.707,-.707) -- (.707,.707);
\draw(-.707,.707) -- (.707,-.707);
\end{scope}
\begin{scope}[xshift=8cm]
\fill[black] (-.3,-.3) circle (2.5pt);
\fill[black] (-.3,.3) circle (2.5pt);
\fill[black] (.3,-.3) circle (2.5pt);
\fill[black] (.3,.3) circle (2.5pt);
\draw(-.707,-.707) -- (-.3,-.3);
\draw(.707,.707) -- (.3,.3);
\draw(-.707,.707) -- (-.3,.3);
\draw(.707,-.707) -- (.3,-.3);
\draw(0,-1.2) node[below]{$\graf_v$};
\end{scope}
\end{tikzpicture}
\end{center}
\caption{A local picture of vertex explosion along with an intermediate graph which admits a graph morphism from $\graf_v$ and a smoothing to $\graf$}
\label{figure: vertex explosion}
\end{figure}
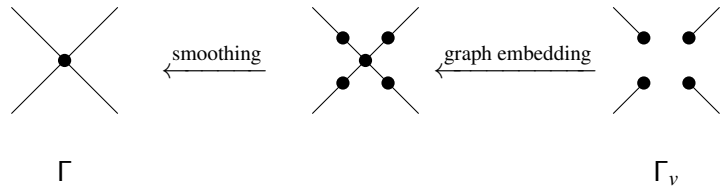

\begin{proposition}[{\cite[Corollary 5.16]{AnDrummond-ColeKnudsen:SSGBG}}]\label{proposition: les vertex reduced}
Fix a half-edge $h_0\in H(v)$.
There is a short exact sequence of differential bigraded $R[E]$-modules
\[
0\to \fullyreduced{\graf_v;R}\to \fullyreduced{\graf;R}\xrightarrow{\psi} \bigoplus_{h\in H(v)\setminus\{h_0\}}\fullyreduced{\graf_v;R}[1]\{1\}\to 0,
\] where $\psi$ sends an element of $\fullyreduced{\graf;R}$, written uniquely as $\beta+\sum_{h\in H(v)\setminus \{h_0\}}(h-h_0)\alpha_h$ with $\beta$ involving no half-edges incident on $v$, to $(\alpha_h)_{h\in H(v)\setminus \{h_0\}}$. 
\end{proposition}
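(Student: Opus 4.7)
The plan is to exploit the explicit free basis of $\reducedlocalstates{v}$. Fixing $h_0$, this module is free with basis $\{\varnothing\}\cup\{h - h_0:h\in H(v)\setminus\{h_0\}\}$, so distributing tensor products over direct sums gives an internal decomposition of bigraded $R$-modules
\[
\fullyreduced{\graf;R} \;=\; M \;\oplus\; \bigoplus_{h\in H(v)\setminus\{h_0\}} (h-h_0)\cdot M,
\]
where $M \coloneqq R[E]\otimes_{\mathbb{Z}} \bigotimes_{w\neq v}\reducedlocalstates{w}$. This is exactly the unique presentation $\beta + \sum_h (h-h_0)\alpha_h$ appearing in the statement, so it realizes $\psi$ as the projection onto the second summand and identifies $M$ as its kernel.

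First, I would identify the summand $M$ with $\fullyreduced{\graf_v;R}$. In $\graf_v$ each new vertex $(v,h)$ is univalent, hence $\reducedlocalstates{(v,h)}=R\varnothing$ collapses to a trivial tensor factor and $\fullyreduced{\graf_v;R}\cong M$ canonically. Vertex explosion preserves the edge set and leaves all half-edges away from $v$ untouched, so this is an isomorphism of differential bigraded $R[E]$-modules, and the resulting inclusion $\fullyreduced{\graf_v;R}\hookrightarrow\fullyreduced{\graf;R}$ agrees with the functorial map $\fullyreduced{g;R}$ induced by the graph morphism $g\colon\graf_v\to\graf$. It realizes the $\varnothing$-summand as a sub-$R[E]$-module (the $R[E]$-action touches only the edge factor) and as a subcomplex, because elements with no $v$-half-edges differentiate to elements with no $v$-half-edges.

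With these identifications in hand, the module-level assertion is essentially formal: $\psi$ is well defined by the uniqueness of the decomposition, is $R[E]$-linear and surjective, and has kernel precisely the image of $\fullyreduced{\graf_v;R}$. It then remains to check that $\psi$ is a chain map. Applying the Leibniz rule with $|h-h_0|=1$ and $\partial(h-h_0)=e(h)-e(h_0)\in R[E]$ yields
\[
\partial\!\left((h-h_0)\alpha_h\right) = \left(e(h)-e(h_0)\right)\alpha_h \;-\; (h-h_0)\,\partial\alpha_h.
\]
The first term lies in $M$ and hence in the kernel of $\psi$, while the second projects to $-\partial\alpha_h$ in the $h$-coordinate. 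This matches $\partial\circ\psi$ under the standard sign convention $d_{N[1]}=-d_N$ on the shifted target $\fullyreduced{\graf_v;R}[1]\{1\}$.

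The main obstacle is notational rather than conceptual: keeping the Koszul sign in the Leibniz rule consistent with the sign picked up by the shift functor $[1]$ on the right-hand summand. Once the basis-level decomposition is set up and the $\varnothing$-summand is identified with $\fullyreduced{\graf_v;R}$, all remaining checks (well-definedness, surjectivity, $R[E]$-linearity, and the chain map property) reduce to direct verifications from the definition of the reduced \'{S}wi\k{a}tkowski complex.
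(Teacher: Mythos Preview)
Your argument is correct. The paper itself does not supply a proof of this statement: it is quoted as \cite[Corollary~5.16]{AnDrummond-ColeKnudsen:SSGBG} over $\mathbb{Z}$, and the only additional sentence is the observation that, since $\fullyreduced{\graf_v}$ is degreewise flat, tensoring with $R$ preserves exactness. What you have written is essentially the direct verification underlying that cited corollary, carried out over $R$ from the start rather than over $\mathbb{Z}$ followed by a base-change argument; the two routes are equivalent and your presentation is self-contained.

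One minor remark: the claim that your inclusion $M\hookrightarrow\fullyreduced{\graf;R}$ coincides with the functorial map induced by the graph morphism $\graf_v\to\graf$ is true but not needed for exactness---all that is required is the identification of $M$ with $\fullyreduced{\graf_v;R}$ as a differential bigraded $R[E]$-module, which you establish directly from the observation that the new vertices $(v,h)$ are univalent and hence contribute trivial reduced local-state factors.
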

The result in~\cite{AnDrummond-ColeKnudsen:SSGBG} is only taken with $\mathbb{Z}$ coefficients. 
Since $\fullyreduced{\graf_v}$ is degreewise flat, tensoring with $R$ preserves exactness.

In the homology long exact sequence corresponding to this short exact sequence of chain complexes, the connecting homomorphism $\delta$ from $\bigoplus H_*(\fullyreduced{\graf_v;R})\{1\}\to H_*(\fullyreduced{\graf_v;R})$ is given by the formula
\[
\delta \beta_h = (e(h)-e(h_0))\beta_h.\]

\begin{remark}\label{remark:ss}
This exact sequence is a degenerate example of a spectral sequence interpolating between the homology groups of the configuration spaces of $\graf$ and those of the graph obtained by exploding a specified collection of vertices. This type of spectral sequence will play an important role in Section \ref{section:growth} below.
\end{remark}

The second tool is a type of atomic homology class. Recall that the configuration space of two points in the star $\stargraph{3}$ is homotopy equivalent to a circle.

\begin{definition}
\label{defi: star classes}
\cite[\textsection 5.1]{AnDrummond-ColeKnudsen:SSGBG}
Let $\stargraph{3}\to\graf$ be a graph morphism. A {\em star class} is a class in $H_1(B_2(\graf);R)$ which is the image under the induced morphism of the generator $\alpha$ in $H_1(B_2(\graf);R)\simeq R$ represented by the chain $a\in S(\stargraph{3};R)$
\[
a\coloneqq e_1(h_2-h_3)+e_2(h_3-h_1)+e_3(h_1-h_2),
\]
where $h_i$ is the half-edge of $e_i$ adjacent to the star vertex of $\stargraph{3}$.
\end{definition}
We refer to the chain $a$, and to its image, as the \emph{standard representative} of $\alpha$. See Figure~\ref{figure: star class}.
Given $m$ graph morphisms $\stargraph{3}\to \graf$ with pairwise disjoint images, we obtain a degree $m$ homology class, called the \emph{external product} of the star classes \cite[Definition 5.10]{AnDrummond-ColeKnudsen:SSGBG}, whose standard representative is the tensor product of the standard representatives of the factors.

\begin{figure}[ht]
\begin{center}
\begin{tikzpicture}
\draw (1,.125) node{$-$}; 
\draw (3,.125) node{$+$};
\draw (5,.125) node{$-$};
\draw (7,.125) node{$+$};
\draw (9,.125) node{$-$};
\draw [
    thick,
    decoration={
        brace,
        mirror,
        raise=0.05cm
    },
    decorate
] (-.6,-.5) -- (2.6,-.5);
\draw [
    thick,
    decoration={
        brace,
        mirror,
        raise=0.05cm
    },
    decorate
] (3.4,-.5) -- (6.6,-.5);
\draw [
    thick,
    decoration={
        brace,
        mirror,
        raise=0.05cm
    },
    decorate
] (7.4,-.5) -- (10.6,-.5);
\begin{scope}[xshift=4cm]
\draw(0,.6) node [above] {$e_2h_3$};
\draw[black, fill=black] (0,0) circle (1.5pt);
 \draw(0,0) -- (0,.5);
 \draw(0,0) --(.433,-.25);
 \draw(0,0) --(-.433,-.25);
\draw [fill] (0,0) circle [radius = 2pt];
\draw [fill] (.433,-.25) circle [radius = 2pt];
\draw [black, very thick] (0,0)-- (.433,-.25);
\draw [fill] (0,.5) circle [radius = 2pt];
\end{scope}
\begin{scope}[xshift=0cm]
\draw(0,.6) node [above] {$e_3h_1$};
\draw[black, fill=black] (0,0) circle (1.5pt);
 \draw(0,0) -- (0,.5);
 \draw(0,0) --(.433,-.25);
 \draw(0,0) --(-.433,-.25);
\draw [fill] (0,0) circle [radius = 2pt];
\draw [fill] (.433,-.25) circle [radius = 2pt];
\draw [black, very thick] (0,0)-- (-.433,-.25);
\draw [fill] (-.433,-.25) circle [radius = 2pt];
\end{scope}
\begin{scope}[xshift=8cm]
\draw(0,.6) node [above] {$e_1h_2$};
 \draw[black, fill=black] (0,0) circle (1.5pt);
 \draw(0,0) -- (0,.5);
 \draw(0,0) --(.433,-.25);
 \draw(0,0) --(-.433,-.25);
\draw [fill] (0,0) circle [radius = 2pt];
\draw [fill] (-.433,-.25) circle [radius = 2pt];
\draw [black, very thick] (0,0)-- (0,.5);
\draw [fill] (0,.5) circle [radius = 2pt];
\end{scope}
\begin{scope}[xshift=10cm]
\draw(0,.6) node [above] {$e_1h_3$};
\draw[black, fill=black] (0,0) circle (1.5pt);
 \draw(0,0) -- (0,.5);
 \draw(0,0) --(.433,-.25);
 \draw(0,0) --(-.433,-.25);
\draw [fill] (0,0) circle [radius = 2pt];
\draw [fill] (.433,-.25) circle [radius = 2pt];
\draw [black, very thick] (0,0)-- (.433,-.25);
\draw [fill] (-.433,-.25) circle [radius = 2pt];
\end{scope}
\begin{scope}[xshift=2cm]
\draw(0,.6) node [above] {$e_3h_2$};
 \draw[black, fill=black] (0,0) circle (1.5pt);
 \draw(0,0) -- (0,.5);
 \draw(0,0) --(.433,-.25);
 \draw(0,0) --(-.433,-.25);
\draw [fill] (0,0) circle [radius = 2pt];
\draw [fill] (.433,-.25) circle [radius = 2pt];
\draw [black, very thick] (0,0)-- (0,.5);
\draw [fill] (0,.5) circle [radius = 2pt];
\end{scope}
\begin{scope}[xshift=6cm]
\draw(0,.6) node [above] {$e_2h_1$};
\draw[black, fill=black] (0,0) circle (1.5pt);
 \draw(0,0) -- (0,.5);
 \draw(0,0) --(.433,-.25);
 \draw(0,0) --(-.433,-.25);
\draw [fill] (0,0) circle [radius = 2pt];
\draw [fill] (0,.5) circle [radius = 2pt];
\draw [black, very thick] (0,0)-- (-.433,-.25);
\draw [fill] (-.433,-.25) circle [radius = 2pt];
\end{scope}
\begin{scope}[yshift=-1.5cm, xshift=9cm]
\draw(0,0) -- (0,.5);
\draw(0,0) --(.433,-.25);
\draw(0,0) --(-.433,-.25);
\draw[->] (.533,-.0768) .. controls (.1732,.1) .. (.2,.5);
\draw [fill] (0,.5) circle [radius = 2pt];
\draw [fill] (-.433,-.25) circle [radius = 2pt];
\draw [fill] (.433,-.25) circle [radius = 2pt];
\draw [black, very thick] (0,0)-- (.433,-.25);
\draw [black, very thick] (0,0)-- (0,.5);
\end{scope}
\begin{scope}[yshift=-1.5cm, xshift=5cm]
\draw(0,0) -- (0,.5);
\draw(0,0) --(.433,-.25);
\draw(0,0) --(-.433,-.25);
\draw[->](-.333,-.4232) .. controls (0,-.2) .. (.333,-.4232);
\draw [fill] (0,.5) circle [radius = 2pt];
\draw [fill] (-.433,-.25) circle [radius = 2pt];
\draw [fill] (.433,-.25) circle [radius = 2pt];
\draw [black, very thick] (0,0)-- (-.433,-.25);
\draw [black, very thick] (0,0)-- (.433,-.25);
\end{scope}
\begin{scope}[yshift=-1.5cm, xshift=1cm]
\draw(0,0) -- (0,.5);
\draw(0,0) --(.433,-.25);
\draw(0,0) --(-.433,-.25);
\draw[->](-.2,.5) .. controls (-.1732,.1) .. (-.533,-.0768);
\draw [fill] (0,.5) circle [radius = 2pt];
\draw [fill] (-.433,-.25) circle [radius = 2pt];
\draw [fill] (.433,-.25) circle [radius = 2pt];
\draw [black, very thick] (0,0)-- (-.433,-.25);
\draw [black, very thick] (0,0)-- (0,.5);
\end{scope}
\end{tikzpicture}
\end{center}
\caption{The standard representative of a star class}
\label{figure: star class}
\end{figure}
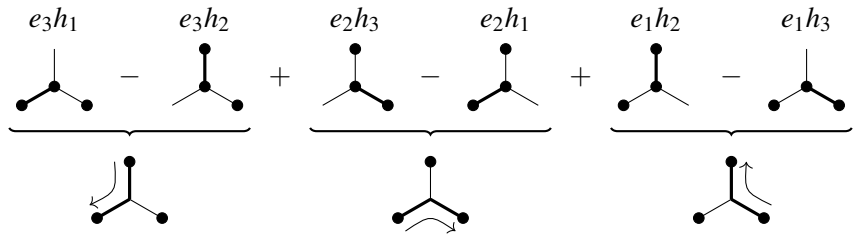

\section{Growth of Betti numbers}\label{section:growth}

Theorem \ref{thm:finite generation} implies that the $\field$-Betti numbers of $B_k(\graf)$ are eventually given by a polynomial in $k$ for any field $\field$ (see, eg,~\cite[Theorem 1.11]{Eisenbud:CAWVTAG}). 
In this section, we determine the exact degree of this polynomial.

\subsection{Connectivity and growth}
We write $\growthdeg{\graf}{i}{\field}$ for the degree of eventual polynomial growth of the $\field$-dimension of $H_i(B_k(\graf);\field)$ in the weight $k$. 
By convention, $\growthdeg{\graf}{i}{\field}=-\infty$ if $H_i(B_k(\graf);\field)=0$ for $k\gg 0$. 

This degree of growth is controlled by a certain elementary connectivity invariant. In order to define this invariant, we introduce the following equivalence relation.

\begin{definition}\label{def:equivalence relation}
Let $\graf$ be a smooth graph. A subset $W\subseteq V^{\geq2}$ determines an equivalence relation ${\sim}_W$ on the edges of $\graf$, where $e{\sim}_W e'$
if and only if $[e]=[e']$ in $\pi_0(\graf\setminus W)$. Writing $E_W=E/{\sim}_W$ for the set of equivalence classes, we define
\begin{align*}
\Ramos{\graf}{W}&=\left| E_W\right|,&
\Ramos{\graf}{i}&=\max \Ramos{\graf}{W}.
\end{align*}
where the maximum is taken over subsets $W$ of cardinality $i$ of $V^{\geq2}$.
We use the convention that $\Ramos{\graf}{i}=-\infty$ if $i>|V^{\geq2}|$. If $\graf$ is not smooth, define $\Ramos{\graf}{i}$ as $\Ramos{\graf'}{i}$ where $\graf'$ is a smooth graph homeomorphic to $\graf$.
\end{definition}
See Figure~\ref{figure: W equivalence}.

\begin{figure}[ht]
\begin{center}
\begin{tikzpicture}
\begin{scope}[xshift=4cm]
\draw[black, fill=black] (-.5,-.5) circle (1.5pt);
\draw[black, fill=black] (.5,-.5) circle (1.5pt);
\draw[black, fill=black] (.5,-1.5) circle (1.5pt);
\draw[black, fill=black] (.5,0) circle (1.5pt);
\draw[black, fill=black] (.5,.5) circle (1.5pt);
\draw[black, fill=black] (1,0) circle (1.5pt);
\draw[black, fill=black] (1,-2) circle (1.5pt);
\draw[black, fill=black] (-.5,-2) circle (1.5pt);
\draw[black, fill=black] (.5,-2.5) circle (1.5pt);
\draw(0,.25) circle (.25cm);
\draw(-.5,-2)--(1,-2);
\draw(1,0)--(0,0)--(-.5,-.5)--(.5,-1.5);
\draw(0,-1)--(.5,-.5);
\draw(.5,0)--(.5,.5);
\draw(0,-2.5)--(.5,-2.5);
\draw(0,-1)--(0,-2);
\draw (.5,-1.5) to[bend right=-30] (.5,-.5);
\draw (.5,-1.5) to[bend right=30] (.5,-.5);
\draw (1,-2) to[bend right=20] (1,0);
\draw (1,-2) to[bend right=-20] (1,0);
\draw[red, fill=red] (0,0) circle (1.5pt);
\draw[red, fill=red] (0,-1) circle (1.5pt);
\draw[red, fill=red] (0,-2) circle (1.5pt);
\draw[red, fill=red] (0,-2.5) circle (1.5pt);
\end{scope}
\begin{scope}[xshift=7cm]
\draw[black, fill=black] (-.5,-.5) circle (1.5pt);
\draw[black, fill=black] (.5,-.5) circle (1.5pt);
\draw[black, fill=black] (.5,-1.5) circle (1.5pt);
\draw[black, fill=black] (.5,0) circle (1.5pt);
\draw[black, fill=black] (.5,.5) circle (1.5pt);
\draw[black, fill=black] (1,0) circle (1.5pt);
\draw[black, fill=black] (1,-2) circle (1.5pt);
\draw[black, fill=black] (-.5,-2) circle (1.5pt);
\draw[black, fill=black] (.5,-2.5) circle (1.5pt);
\draw(0,.25) circle (.25cm);
\draw(-.5,-2)--(1,-2);
\draw(1,0)--(0,0)--(-.5,-.5)--(.5,-1.5);
\draw(0,-1)--(.5,-.5);
\draw(.5,0)--(.5,.5);
\draw(0,-2.5)--(.5,-2.5);
\draw(0,-1)--(0,-2);
\draw (.5,-1.5) to[bend right=-30] (.5,-.5);
\draw (.5,-1.5) to[bend right=30] (.5,-.5);
\draw (1,-2) to[bend right=20] (1,0);
\draw (1,-2) to[bend right=-20] (1,0);
\draw[white, fill=white] (0,0) circle (6pt);
\draw[white, fill=white] (0,-1) circle (6pt);
\draw[white, fill=white] (0,-2) circle (6pt);
\draw[white, fill=white] (0,-2.5) circle (6pt);
\end{scope}
\end{tikzpicture}
\end{center}
\caption{A graph with a choice of $W$ in red, and the seven equivalence classes of edges under $\sim_W$}
\label{figure: W equivalence}
\end{figure}
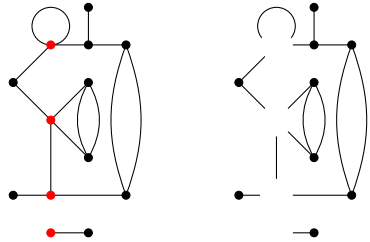

\begin{remark}
Although we work with a different set of conventions, this invariant was essentially defined by Ramos~\cite[page~2306]{Ramos:SPHTBG}, who conjectured Proposition \ref{proposition: degree upper bound} below.
\end{remark}

\begin{example}
\label{example:Ramos calculations}
Here are some basic examples of the behavior of the invariant $\Ramos{\graf}{i}$.
\begin{enumerate}
\item If $\graf$ is an isolated vertex, then $\Ramos{\graf}{0}=0$.
\item If $\graf$ is an isolated edge, then $\Ramos{\graf}{0}=1$.
\item If $\graf$ is a cycle, then $\Ramos{\graf}{0}=\Ramos{\graf}{1}=1$.
\end{enumerate}
In these examples, $\Ramos{\graf}{i}=-\infty$ for all other values of $i$.
\begin{enumerate}[resume]
\item For $\graf=\graf_1\sqcup \graf_2$ a disjoint union of (not necessarily connected) graphs, 
 \[
\Ramos{\graf}{i}=\max_{a+b=i}(\Ramos{\graf_1}{a} + \Ramos{\graf_2}{b}).
 \]
\end{enumerate}
\end{example}

We recall the statement of Theorem \ref{thm:polynomial growth}.

\begin{degreetheorem}
\label{theorem: degree theorem}
If $\graf$ has at least one edge, then $\growthdeg{\graf}{i}{\field}=\Ramos{\graf}{i}-1$.
\end{degreetheorem}

\begin{remark}
A statement valid for an arbitrary graph is available for the sum 
\[\sum_{\ell=0}^k\dim_\field H_i(B_\ell(\graf);\field),\] 
which exhibits eventual polynomial growth of degree $\Ramos{\graf}{i}$.
\end{remark}

\begin{example}
For $n\ge 3$, let $\thetagraph{n}$ denote the Theta-$n$ graph, which is the double cone on $n$ points, as shown:
\[
\thetagraph{n}\coloneqq
\begin{tikzpicture}[baseline=-0.5ex]
\fill[black] (-.5,0) circle (2.5pt); 
\fill[black] (.5,0) circle (2.5pt); 
\draw(0,0) circle (.5cm);
\draw(.5,0) arc (0:180:0.5 and 0.25);
\draw(0,-0.15) node {$\vdots$};
\draw(.5,0)--(-.5,0);
\end{tikzpicture}
\]
It is known that 
\begin{align*}
\dim_\field H_i(B_k(\thetagraph{n});\field)=\begin{cases}
\displaystyle1 & i=0;\\
\displaystyle\binom{n}{2} & i=1;\\
\displaystyle
\binom{n}{2} -1 + \sum_{j=0}^2(1-n)^j\binom{2}{j}\binom{k-j+n-1}{n-1} & i=2.
\end{cases}
\end{align*}
Calculating the zeroth Betti number is trivial, the first is given in \cite[Lemma~3.14]{KoPark:CGBG}, and the second follows from these and knowledge of the Euler characteristic for $B_k(\thetagraph{n})$, which can be extracted from~\cite[Theorem~2]{Gal:ECCSC} and is given explicitly in~\cite[Corollary~5.11]{AnDrummond-ColeKnudsen:SSGBG}. 
These Betti numbers exhibit polynomial growth of degree $0$ for $i\le 1$ and $n-1$ for $i=2$, and these numbers
are one less than respective numbers of components obtained by removing $i$ vertices. Thus, the theorem holds in this example.
\end{example}

\begin{example}
For the complete graph $\completegraph{4}$, the explicit formula for the $i$th Betti number $\dim_\field H_i(B_k(\completegraph{4});\field)$ in terms of $k$ can be computed directly from \cite[\textsection 5.6]{AnDrummond-ColeKnudsen:SSGBG}. Table~\ref{table:complete four-graph} shows the growth $\growthdeg{\completegraph{4}}{i}{\field}$ of the Betti number and the invariant $\Ramos{\completegraph{4}}{i}$, verifying Theorem~\ref{thm:polynomial growth} for $\completegraph{4}$.
\end{example}

\begin{table}[ht]
\[
\renewcommand{\arraystretch}{3.365}
\setlength{\arraycolsep}{10pt}
\begin{array}{rcccc}
i & \dim_\field H_i (B_k(\completegraph{4});\field) & \growthdeg{\completegraph{4}}{i}{\field} & \Ramos{\completegraph{4}}{i} & (\completegraph{4})_W
\\\hline
0 & 1 & 0 & 1 &
\begin{tikzpicture}[baseline=-.5ex,scale=0.7]
\draw(0,0) -- (0,1) -- (.866,-.5) -- (-.866,-.5) -- (0,1);
\draw(0,0) --(.866,-.5);
\draw(0,0) --(-.866,-.5);
\draw[black, fill=black] (0,0) circle (2.5pt);
\draw[black, fill=black] (0,1) circle (2.5pt);
\draw[black, fill=black] (.866,-.5) circle (2.5pt);
\draw[black, fill=black] (-.866,-.5) circle (2.5pt);
\end{tikzpicture}
\\
1 & 4 \text{ (for }k>1\text{)} & 0 & 1 &
\begin{tikzpicture}[baseline=-.5ex,scale=0.7]
\draw(0,0) -- (0,1) -- (.866,-.5) -- (-.866,-.5) -- (0,1);
\draw(0,0) --(.866,-.5);
\draw(0,0) --(-.866,-.5);
\draw[white, fill=white] (0,0) circle (2.5pt);
\draw[black, fill=black] (0,1) circle (2.5pt);
\draw[black, fill=black] (.866,-.5) circle (2.5pt);
\draw[black, fill=black] (-.866,-.5) circle (2.5pt);
\end{tikzpicture}
\\
2 & 6k-15\text{ (for }k>2\text{)} & 1 & 2 &
\begin{tikzpicture}[baseline=-.5ex,scale=0.7]
\draw(0,0) -- (0,1) -- (.866,-.5) -- (-.866,-.5) -- (0,1);
\draw(0,0) --(.866,-.5);
\draw(0,0) --(-.866,-.5);
\draw[white, fill=white] (0,0) circle (2.5pt);
\draw[white, fill=white] (0,1) circle (2.5pt);
\draw[black, fill=black] (.866,-.5) circle (2.5pt);
\draw[black, fill=black] (-.866,-.5) circle (2.5pt);
\end{tikzpicture}
\\
3 & 4\binom{k-3}{3} & 3 & 4 &
\begin{tikzpicture}[baseline=-.5ex,scale=0.7]
\draw(0,0) -- (0,1) -- (.866,-.5) -- (-.866,-.5) -- (0,1);
\draw(0,0) --(.866,-.5);
\draw(0,0) --(-.866,-.5);
\draw[white, fill=white] (0,0) circle (2.5pt);
\draw[white, fill=white] (0,1) circle (2.5pt);
\draw[white, fill=white] (.866,-.5) circle (2.5pt);
\draw[black, fill=black] (-.866,-.5) circle (2.5pt);
\end{tikzpicture}
\\
4 & \binom{k-3}{5} & 5 & 6 &
\begin{tikzpicture}[baseline=-.5ex,scale=0.7]
\draw(0,0) -- (0,1) -- (.866,-.5) -- (-.866,-.5) -- (0,1);
\draw(0,0) --(.866,-.5);
\draw(0,0) --(-.866,-.5);
\draw[white, fill=white] (0,0) circle (2.5pt);
\draw[white, fill=white] (0,1) circle (2.5pt);
\draw[white, fill=white] (.866,-.5) circle (2.5pt);
\draw[white, fill=white] (-.866,-.5) circle (2.5pt);
\end{tikzpicture}
\\
\ge 5 & 0 & -\infty & -\infty
\end{array}
\]
\caption{For the complete graph $\completegraph{4}$, a table containing the Betti number $\dim_\field H_i (B_k(\completegraph{4});\field)$, the growth rate $\growthdeg{\completegraph{4}}{i}{\field}$, the invariant $\Ramos{\completegraph{4}}{i}$, and the corresponding complement $(\completegraph{4})_W$}
\label{table:complete four-graph}
\end{table}

\begin{definition}
We say that a graph $\graf$ is \emph{normal} if it is connected, smooth, and has an essential vertex.
\end{definition}

The bulk of the theorem is contained in the following two results, whose proofs will occupy the rest of the section. The second is essentially Conjecture 4.3 of \cite{Ramos:SPHTBG}.

\begin{proposition}
\label{proposition: degree lower bound}
If $\graf$ is normal,
then $\growthdeg{\graf}{i}{\field}\ge \Ramos{\graf}{i}-1$.
\end{proposition}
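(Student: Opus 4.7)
My plan is to exhibit, for a carefully chosen $W\subseteq V^{\geq 2}$ of cardinality $i$ realizing $\Ramos{\graf}{W}=\Ramos{\graf}{i}$, a cyclic $\field[E]$-submodule of $H_i(B(\graf);\field)$ whose Hilbert polynomial has degree $|E_W|-1$. Since $\graf$ is normal, any bivalent vertex would have to be the vertex of a self-loop that is its own connected component, which is ruled out by connectivity and the presence of an essential vertex; hence $V^{\geq 2}=\essential$, and every $w\in W$ has valence at least $3$. For each such $w$ I pick three half-edges at $w$ to define a graph morphism $\stargraph{3}\to\graf$ into a small star neighborhood, yielding a star class $[\alpha_w]\in H_1(B_2(\graf);\field)$. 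Choosing these neighborhoods pairwise disjoint, I form the external product $[\alpha_W]\coloneqq\prod_{w\in W}[\alpha_w]\in H_i(B_{2i}(\graf);\field)$, whose standard representative is $c_W=\bigotimes_{w\in W}a_w\in\fullyreduced{\graf;\field}$.

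\textbf{Descent to $\field[E_W]$.} The next step is to check that the $\field[E]$-action on $[\alpha_W]$ factors through the natural surjection $\field[E]\twoheadrightarrow\field[E_W]$. For edges $e,e'$ sharing a vertex $u\notin W$, the element $(h_{u,e}-h_{u,e'})\cdot c_W$ is a well-defined chain in $\fullyreduced{\graf;\field}$, since $u\notin W$ ensures that no half-edge at $u$ already appears in $c_W$. Its boundary is $\pm(e-e')\cdot c_W$, so $(e-e')\cdot[\alpha_W]=0$ for any such pair. Concatenating such elementary moves along an arbitrary path in $\graf\setminus W$ gives $(e-e')\cdot[\alpha_W]=0$ whenever $e\sim_W e'$, so the action indeed descends to $\field[E_W]$.

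\textbf{Faithfulness via iterated vertex explosion.} The crux is to prove that the induced map $\varphi\colon\field[E_W]\to H_i(B(\graf);\field)$, $m\mapsto m\cdot[\alpha_W]$, is injective; once this is in hand, polynomial growth of degree $|E_W|-1=\Ramos{\graf}{i}-1$ follows from Hilbert polynomial considerations applied to the cyclic $\field[E_W]$-module $\field[E_W]/\ker\varphi$ (which then has full Krull dimension $|E_W|$). My plan is to iterate Proposition~\ref{proposition: les vertex reduced} across the $|W|=i$ vertices of $W$, producing a spectral sequence (or iterated connecting morphism) relating $\fullyreduced{\graf;\field}$ to $\fullyreduced{\graf_W;\field}$, with the iterated differential built from the residues $e(h)-e(h_0)$ at each $w\in W$. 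After total explosion, $\graf_W$ decomposes as $\bigsqcup_{\alpha\in E_W}\graf^\alpha$; combining the K\"unneth formula with the identification of $H_0$ of the reduced \'{S}wi\k{a}tkowski complex with the edge polynomial ring modulo the valence-$\geq 2$ edge identifications gives $H_0(\fullyreduced{\graf_W;\field})\cong\field[E_W]$ as $\field[E]$-modules. Tracing $c_W$ through the iterated connecting homomorphisms should identify $[\alpha_W]$ with a multi-syzygy generator whose underlying $H_0$-class is the unit $1\in\field[E_W]$, from which injectivity of $\varphi$ follows.

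\textbf{Main obstacle.} The delicate point is the iterated Koszul and regularity analysis: organizing the iterated LES into a clean spectral sequence over $\field[E_W]$, checking that no intermediate differential kills the chosen star-class representative, and verifying that the residues contributed at each $w\in W$ assemble into a regular-enough sequence in $\field[E_W]$. A useful model is the star-graph case $\graf=\stargraph{n}$ with $W=\{v\}$, where $\graf_v$ is a disjoint union of intervals, $H_0(\fullyreduced{\graf_v;\field})\cong\field[e_1,\ldots,e_n]$, and the single LES exhibits $H_1$ as the first Koszul homology of the regular sequence $\{e_j-e_0\}_{j\neq 0}$, directly realizing the star-class submodule as free of rank one. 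The general argument reduces to iterating this picture at each $w$ in turn, with the maximality hypothesis $\Ramos{\graf}{W}=\Ramos{\graf}{i}$ used to guarantee that the half-edges at each $w$ contribute residues surviving nontrivially modulo the relations imposed at the earlier steps.
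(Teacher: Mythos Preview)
Your overall architecture matches the paper's: pick $W$ realizing $\Ramos{\graf}{i}$, form a $W$-torus as an external product of star classes, and show that the cyclic $\field[E]$-submodule it generates is a free $\field[E_W]$-module. The descent step is fine. The gap is in the step you flag as the ``main obstacle,'' and it is not merely a matter of bookkeeping: an \emph{arbitrary} choice of three half-edges at each $w\in W$ can produce a $W$-torus whose cyclic submodule is strictly smaller than $\field[E_W]$. Concretely, take $\graf=\thetagraph{3}^+$ (the theta graph with a tail at one vertex $v'$); the unique maximal choice for $i=1$ is $W=\{v'\}$ with $\Ramos{\graf}{1}=2$. If you pick the three theta half-edges at $v'$, the $\Theta$-relation identifies your star class with one at the other vertex $w'\notin W$; now half-edges at $v'$ are available to bound $(e_{\mathrm{tail}}-e_{\mathrm{theta}})\cdot c_W$, so the action collapses to $\field[E_\varnothing]$ and the submodule has growth degree $0$, not $1$. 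Thus maximality of $W$ does \emph{not} make every choice of star factors work, and your plan to ``use maximality to guarantee that the residues survive'' needs a mechanism.

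The paper supplies that mechanism in two steps. First, maximality of $W$ forces $W$ to be \emph{well-separating} (every $w\in W$ touches at least two components of $\graf_W$), except in the degenerate case $i=\Ramos{\graf}{i}=1$, which is handled separately by citing that $H_1(B_k(\graf);\field)\neq 0$ for large $k$. Second, well-separation lets you choose, at each $w$, three half-edges meeting at least two distinct components of $\graf_W$; the paper shows (via the same filtration/spectral sequence you sketch, but with this specific choice in hand) that the resulting $W$-torus is \emph{rigid}, i.e.\ survives to the top filtration level, which forces $\field[E]\cdot[\alpha_W]\cong\field[E_W]$. So your proof can be completed, but you must (a) insert the well-separating lemma, (b) make the explicit ``two-component'' choice of half-edges at each $w$, and (c) treat the exceptional case $i=\Ramos{\graf}{i}=1$ by a direct argument.
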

\begin{proposition}
\label{proposition: degree upper bound}
If $\graf$ is normal, then $\growthdeg{\graf}{i}{\field}\le \Ramos{\graf}{i}-1$.
\end{proposition}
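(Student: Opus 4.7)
My plan is to induct on $\lvert V^{\mathrm{ess}}(\graf) \rvert$, proving a stronger support-theoretic statement that propagates cleanly. For $W \subseteq V^{\geq 2}(\graf)$, let $V_W \subseteq \Spec \field[E]$ denote the linear subscheme defined by the ideal generated by the differences $e - e'$ for $e \sim_W e'$; then $V_W \cong \mathbb{A}^{E_W}_\field$ has dimension $\Ramos{\graf}{W}$. I will establish
\[
(\star) \quad \mathrm{Supp}_{\field[E]}\bigl(H_i(\fullyreduced{\graf;\field})\bigr) \subseteq \bigcup_{\substack{W \subseteq V^{\geq 2}(\graf) \\ \lvert W \rvert = i}} V_W.
\]
Since Theorem~\ref{thm:finite generation} makes $H_i$ finitely generated over the polynomial ring $\field[E]$, classical Hilbert--Samuel theory then yields $\growthdeg{\graf}{i}{\field} = \dim \mathrm{Supp}\, H_i - 1 \leq \max_W \Ramos{\graf}{W} - 1 = \Ramos{\graf}{i} - 1$. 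In the base case $\lvert V^{\mathrm{ess}} \rvert = 0$, every connected component of $\graf$ is a point, an interval, or a cycle, and $(\star)$ is verified by hand; for disconnected graphs the claim passes through K\"unneth, since $V_{W_1 \sqcup W_2}^{\graf_1 \sqcup \graf_2} = V_{W_1}^{\graf_1} \times V_{W_2}^{\graf_2}$.

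For the inductive step I pick $v \in V^{\mathrm{ess}}(\graf)$, set $J_v = (e(h) - e(h_0) \mid h \in H(v) \setminus \{h_0\})$, and apply Proposition~\ref{proposition: les vertex reduced} to obtain the short exact sequence
\[
0 \to \coker(\delta_{i+1}) \to H_i(\fullyreduced{\graf;\field}) \to \ker(\delta_i) \to 0,
\]
where $\coker(\delta_{i+1}) = H_i(\fullyreduced{\graf_v;\field})/J_v H_i(\fullyreduced{\graf_v;\field})$ and $\ker(\delta_i)$ is a submodule of $\bigoplus_h H_{i-1}(\fullyreduced{\graf_v;\field})\{1\}$. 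The heart of the argument is the pair of identities
\[
V_{W'}^{\graf_v} = V_{W' \cup \{v\}}^{\graf} \qquad \text{and} \qquad V_{W'}^{\graf_v} \cap V(J_v) = V_{W'}^{\graf},
\]
valid for $W' \subseteq V^{\geq 2}(\graf) \setminus \{v\}$. The first is immediate from the fact that $\graf_v \setminus W'$ and $\graf \setminus (W' \cup \{v\})$ have the same edge-partition into components; the second expresses the principle that intersecting with $V(J_v)$ un-explodes $v$, joining the equivalence $\sim_{W'}^{\graf_v}$ with the ``at $v$'' relation to produce $\sim_{W'}^{\graf}$. Combining these identities with the inductive hypothesis applied component-wise to $\graf_v$ (each component having strictly fewer essential vertices than $\graf$) yields
\[
\mathrm{Supp}(\ker(\delta_i)) \subseteq \bigcup_{\substack{\lvert W \rvert = i \\ v \in W}} V_W^{\graf} \qquad \text{and} \qquad \mathrm{Supp}(\coker(\delta_{i+1})) \subseteq \bigcup_{\substack{\lvert W \rvert = i \\ v \notin W}} V_W^{\graf},
\]
whose union completes the inductive step.

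I expect the main obstacle to be the careful verification of the second geometric identity $V_{W'}^{\graf_v} \cap V(J_v) = V_{W'}^{\graf}$: while the underlying join-of-equivalence-relations argument is conceptually clean, it requires an honest chain-of-edges analysis, and the bookkeeping must remain correct when $v$ carries self-loops (where some of the generators of $J_v$ become redundant). The remaining ingredients---the dimension-to-growth translation via Hilbert--Samuel, the base case on cycle components where the bivalent vertex sits in $V^{\geq 2}$ but not $V^{\mathrm{ess}}$, and the K\"unneth step that lets the inductive hypothesis cover disconnected $\graf_v$---are all standard.
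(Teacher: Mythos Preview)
Your argument is correct and takes a genuinely different route from the paper. The paper proves a generalisation (Proposition~\ref{proposition: degree count with sinks}) about the auxiliary ``collision-allowed'' complexes $\fullyreducedsub{W}{\graf;\field}=\fullyreduced{\graf_W;\field}\otimes_{\field[E]}\field[E_W]$, inducting on $|\essential\setminus W|$ via a two-row spectral sequence (Lemma~\ref{lem:ss of bicomplex}) and a separate technical growth estimate (Lemma~\ref{lem:technical growth}) that bounds Hilbert functions of tensor products through a $\Tor$ computation. You instead establish the sharper statement that $\mathrm{Supp}_{\field[E]}H_i(\fullyreduced{\graf;\field})$ is contained in the finite union of linear subspaces $\bigcup_{|W|=i}V_W$, inducting directly on $|\essential|$ through the vertex-explosion long exact sequence of Proposition~\ref{proposition: les vertex reduced} and the pair of scheme-theoretic identities $V_{W'}^{\graf_v}=V_{W'\cup\{v\}}^{\graf}$ and $V_{W'}^{\graf_v}\cap V(J_v)=V_{W'}^{\graf}$; both identities reduce to the observation that $\sim_{W'}^{\graf}$ is the join of $\sim_{W'}^{\graf_v}$ with the ``incident at $v$'' relation, so the corresponding linear ideals satisfy $I_{W'}^{\graf}=I_{W'}^{\graf_v}+J_v$, and your self-loop worry is harmless since it only introduces redundant generators into $J_v$. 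Your approach is cleaner and proves more---a structural constraint on the support rather than merely a numerical bound on growth---while avoiding the auxiliary complexes, the spectral sequence, and the Quillen--Suslin/global-dimension input entirely. The paper's route, for its part, introduces the complexes $\fullyreducedsub{W}{\graf;\field}$ (which model configurations with collisions permitted at $W$) and the associated family of spectral sequences, tools the authors explicitly flag as useful for future computations.
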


Assuming these results for the moment, we complete the proof.

\begin{proof}[Proof of Theorem \ref{thm:polynomial growth}]
We may assume that $\graf$ is smooth. Since the presence or absence of finitely many isolated vertices in $\graf$ does not change the eventual growth rate, we may further assume that each connected component of $\graf$ contains an edge. 
Thus, each connected component of $\graf$ is either normal, an interval, or a cycle.
The theorem is known for each of these components; indeed, Propositions \ref{proposition: degree lower bound} and \ref{proposition: degree upper bound} supply the normal case, and the other cases are classical.
This observation forms the base case for an induction on $|\pi_0(\graf)|$.

For the inductive step, we make use of ``big theta'' notation \cite[page 19]{Knuth:BOBOBT} defined as follows:
\[
f(k)=\Theta(g(k))\Longleftrightarrow
\exists c, C>0 : c\cdot g(k)\le f(k)\le C\cdot g(k) \quad\forall k\gg0.
\] Consider a decomposition of a smooth graph into two non-empty graphs $\graf=\graf_1\sqcup \graf_2$, and assume that the degree theorem holds for each of $\graf_1$ and $\graf_2$. 
By the K\"unneth theorem, we have
\begin{align*}
\dim_\field(H_i(B_k(\graf);\field))&=
\sum_{\ell=0}^k\sum_{a+b=i}
\dim_\field(H_a(B_\ell(\graf_1);\field))\dim_\field(H_b(B_{k-\ell}(\graf_2);\field))\\
&=\sum_{a+b=i}\Theta\left(\sum_{\ell=0}^k\ell^{\Ramos{\graf_1}{a}-1}(k-\ell)^{\Ramos{\graf_2}{b}-1}\right)\\
&=\sum_{a+b=i}\Theta\left(k^{\Ramos{\graf_1}{a}+\Ramos{\graf_2}{b}-1} \sum_{\ell=0}^k\frac 1k\left(\frac{\ell}k\right)^{\Ramos{\graf_1}{a}-1}\left(1-\frac\ell{k}\right)^{\Ramos{\graf_2}{b}-1}\right)\\
&=\sum_{a+b=i}\Theta\left(k^{\Ramos{\graf_1}{a}+\Ramos{\graf_2}{b}-1}\right) \int_0^1 x^{\Ramos{\graf_1}{a}-1}(1-x)^{\Ramos{\graf_1}{b}-1} dx,
\end{align*} where the second equality uses our assumption on $\graf_1$ and $\graf_2$. 
By our assumption on the connected components of $\graf$, both $\Ramos{\graf_1}{a}$ and $\Ramos{\graf_2}{b}$ are nonzero for every $a$ and $b$, so the integral shown is a nonnegative rational number independent of $k$.
The largest exponent of $k$ present in this sum is
\[
\max_{a+b=i}\left( \Ramos{\graf_1}{a}+\Ramos{\graf_2}{b}-1 \right)
=
\Ramos{\graf}{i}-1,
\] as desired---see Example~\ref{example:Ramos calculations}.
\end{proof}

\subsection{Lower bound}

To prove Proposition~\ref{proposition: degree lower bound}, we will define a special kind of homology class, called a $W$-torus, depending on a set of vertices $W$.
For a suitable choice of $W$, we will show that a $W$-torus exists and generates an $\field[E]$-submodule with polynomial growth of the expected degree. 

Throughout this section, we assume that $\graf$ is normal. Given a set $W$ of vertices, we write $\graf_W$ for the graph obtained from $\graf$ by successive explosion of each vertex of $W$.

\begin{definition}
A subset $W\subseteq \essential$ is \emph{well-separating} if the open star of each $v\in W$ intersects more than one connected component of $\graf_W$.
\end{definition}

\begin{figure}[ht]
\begin{center}
\begin{tikzpicture}[baseline=-0.5ex]
\begin{scope}[xshift=-3cm]
\draw (0,-1.2) ++ (150:0.8) -- ++ (0,2.4) -- ++ (150:1.2) -- ++ (0,-2.4) -- cycle;
\draw (0,.05) ++ (150:1.4) node[above] {\slantbox{0.866}{-0.5}{$\graf_{\!1}$}};
\draw (0, -0.2) ++ (30:0.8) -- ++ (0,1.4) -- ++ (30:1.2) -- ++ (0,-1.4) -- cycle;
\draw (0,0.3) ++ (30:1.5) node[above] {\slantbox{0.866}{0.5}{$\graf_{\!2}$}};
\draw[line width=3, white, fill=white] (0,-1.2) ++ (200:1.2) -- ++ (0,2.4) -- ++ (200:1.2) -- ++ (0,-2.4) -- cycle;
\draw[line width=3, white, fill=white] (0,-1.2) ++ (-20:1.2) -- ++ (0,2.4) -- ++ (-20:1.2) -- ++ (0,-2.4) -- cycle;
\draw[densely dashed] (0,-1.2) ++ (150:0.8) -- ++ (0,2.4) -- ++ (150:1.2) -- ++ (0,-2.4) -- cycle;
\draw[densely dashed] (0, -0.2) ++ (30:0.8) -- ++ (0,1.4) -- ++ (30:1.2) -- ++ (0,-1.4) -- cycle;
\draw[line width=3, white] (0,-1.2) ++ (200:1.2) -- ++ (0,2.4) -- ++ (200:1.2) -- ++ (0,-2.4) -- cycle;
\draw[line width=3, white] (0,-1.2) ++ (-20:1.2) -- ++ (0,2.4) -- ++ (-20:1.2) -- ++ (0,-2.4) -- cycle;
\draw[line width=3, white] (0,1) + (200:1.2) -- (0,1) -- +(-20:1.2);
\draw (0,1) + (150:.8) -- (0,1) -- +(30:.8);
\draw (0,0.7) + (30:.8) -- (0,1);
\draw (0,0) + (150:.8) -- (0,0) -- +(30:.8);
\draw (0,-0.3) + (150:.8) -- (0,0);
\draw (0,-1) + (150:.8) -- (0,-1);
\draw (0, -0.7) + (150:.8) -- (0,-1);
\draw (0,1) + (200:1.2) -- (0,1) -- +(-20:1.2);
\draw (0,1) + (200:1.2) -- (0,1) -- +(-20:1.2);
\draw (0,0) -- +(-20:1.2);
\draw (0,.3) + (-20:1.2) -- (0,0);
\draw (0,-0.7) + (-20:1.2) -- (0,-1);
\draw (0,-0.4) + (-20:1.2) -- (0,-1);
\draw (0,-1) + (200:1.2) -- (0,-1) -- +(-20:1.2);
\draw[fill] (0,1) circle (2.5pt);
\draw[fill] (0,0) circle (2.5pt);
\draw[fill] (0,-1) circle (2.5pt);
\draw (0,-1.2) ++ (200:1.2) -- ++ (0,2.4) -- ++ (200:1.2) -- ++ (0,-2.4) -- cycle;
\draw (0,0) ++ (200:1.7) node {\slantbox{0.939}{0.342}{$\graf_{\!4}$}};
\draw (0,-1.2) ++ (-20:1.2) -- ++ (0,2.4) -- ++ (-20:1.2) -- ++ (0,-2.4) -- cycle;
\draw (0,-.1) ++ (-20:1.75) node {\slantbox{0.939}{-0.342}{$\graf_{\!3}$}};
\draw[dashed] (-0.4, 1.5) -- (0.4, 1.5) -- (0.4, -1.5) -- (-0.4, -1.5) -- cycle;
\draw (0,1.5) node[above] {$W$};
\end{scope}
\begin{scope}[xshift=3cm]
\draw (-0.8,-1.4) -- ++ (0, 2.8) -- ++ (-1.2,0) -- ++ (0,-2.8) -- cycle;
\draw (-1.3,0) node {$\graf_{\!1}$};
\draw (0,-1.2) ++ (40:1) -- ++ (0,1.73) -- ++ (40:1) -- ++ (0,-1.73) -- cycle;
\draw (0,-0.4) ++ (40:1.6) node {\slantbox{0.766}{0.642}{$\graf_{\!2}$}};
\draw[red] (-0.8,1.2) -- (0,1);
\draw[red] (-0.8,1) -- (0,1);
\draw[red] (-0.8,0.8) -- (0,1);
\draw (-0.8, 0.23) -- (0,0.33) -- (-0.8,0.43);
\draw (-0.8,-1) -- (0,-1);
\draw[line width=3,white,fill] (0,-1.2) ++ (-20:1.2) -- ++ (0,1.73) -- ++ (-20:1) -- ++ (0,-1.73) -- cycle;
\draw[densely dashed] (0,-1.2) ++ (40:1) -- ++ (0,1.73) -- ++ (40:1) -- ++ (0,-1.73) -- cycle;
\draw[line width=3,white] (0,-1.2) ++ (-20:1.2) -- ++ (0,1.73) -- ++ (-20:1) -- ++ (0,-1.73) -- cycle;
\draw[line width=3,white] (0,-1) + (-20:1.2) -- (0,-1);
\draw (0,-1) + (40:1) -- (0,-1);
\draw[line width=3,white] (0,-0.33) + (-20:1.2) -- (0,-0.33);
\draw (0,-0.23) + (40:1) -- (0,-0.33);
\draw (0,-0.43) + (40:1) -- (0,-0.33);
\draw[line width=3,white] (0,0.33) + (-20:1.2) -- (0,0.33);
\draw (0,0.33) + (40:1) -- (0,0.33);
\draw (0,0.33) + (-20:1.2) -- (0,0.33);
\draw (0,-0.33) + (-20:1.2) -- (0,-0.33);
\draw (0,-1) + (-20:1.2) -- (0,-1);
\draw (0,-1.2) ++ (-20:1.2) -- ++ (0,1.73) -- ++ (-20:1) -- ++ (0,-1.73) -- cycle;
\draw (0,-0.4) ++ (-20:1.75) node {\slantbox{0.939}{-0.342}{$\graf_{\!3}$}};
\draw[fill,red] (0,1) circle (2.5pt);
\draw[fill] (0,0.33) circle (2.5pt);
\draw[fill] (0,-0.33) circle (2.5pt);
\draw[fill] (0,-1) circle (2.5pt);
\draw[dashed] (-0.4, 1.5) -- (0.4, 1.5) -- (0.4, -1.5) -- (-0.4, -1.5) -- cycle;
\draw (0,1.5) node[above] {$W'$};
\end{scope}
\end{tikzpicture}
\end{center}
\caption{A well-separating subset $W$ and a non-well-separating subset $W'$ (assuming $\graf_1\setminus W'$ is connected)}
\end{figure}
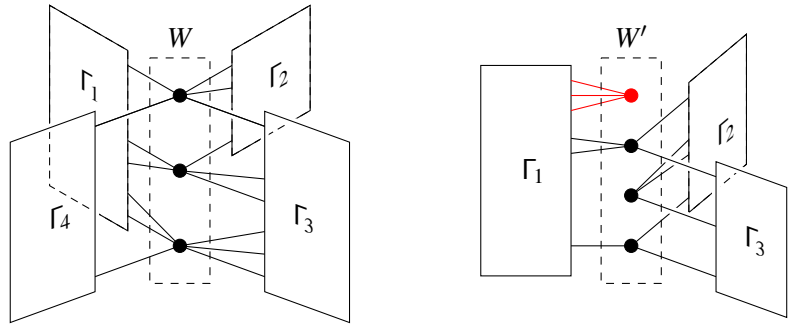

The empty set is vacuously well-separating.
We fix a field $\field$ and a subset $W\subseteq \essential$ for the remainder of this section, and
we write $F_{\ell}H_*(B(\graf);\field)$ for the filtration on homology induced by filtering $\fullyreduced{\graf;\field}$ by the number of half-edge generators at vertices of $W$. 

\begin{definition}
\label{defi: W-torus}
A class $\alpha\in H_{|W|}(B(\graf);\field)$ is a \emph{$W$-torus} if $\alpha$ is the external product of classes $\{\alpha_v\}_{v\in W}$, where $\alpha_v$ is a star class at $v$.
We call $\alpha_v$ a \emph{star factor} of $\alpha$.
We further say that $\alpha$ is \emph{rigid} if $W$ is well-separating and $\alpha$ lies in $F_{|W|}H_{|W|}(B(\graf);\field)$ but not in $F_{|W|-1}H_{|W|}(B(\graf);\field).$
\end{definition}
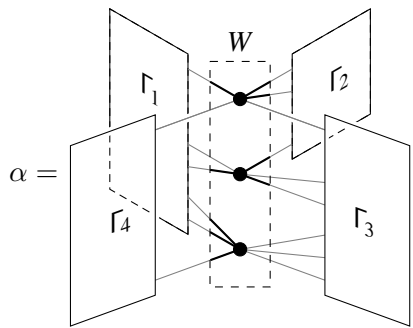
\begin{figure}[ht]
\[
\alpha=
\begin{tikzpicture}[baseline=-0.5ex]
\begin{scope}
\draw (0,-1.2) ++ (150:0.8) -- ++ (0,2.4) -- ++ (150:1.2) -- ++ (0,-2.4) -- cycle;
\draw (0,.05) ++ (150:1.4) node[above] {\slantbox{0.866}{-0.5}{$\graf_{\!1}$}};
\draw (0, -0.2) ++ (30:0.8) -- ++ (0,1.4) -- ++ (30:1.2) -- ++ (0,-1.4) -- cycle;
\draw (0,0.3) ++ (30:1.5) node[above] {\slantbox{0.866}{0.5}{$\graf_{\!2}$}};
\draw[line width=3, white,fill] (0,-1.2) ++ (200:1.2) -- ++ (0,2.4) -- ++ (200:1.2) -- ++ (0,-2.4) -- cycle;
\draw[densely dashed] (0,-1.2) ++ (150:0.8) -- ++ (0,2.4) -- ++ (150:1.2) -- ++ (0,-2.4) -- cycle;
\draw[line width=3, white] (0,-1.2) ++ (200:1.2) -- ++ (0,2.4) -- ++ (200:1.2) -- ++ (0,-2.4) -- cycle;
\draw[line width=3, white,fill] (0,-1.2) ++ (-20:1.2) -- ++ (0,2.4) -- ++ (-20:1.2) -- ++ (0,-2.4) -- cycle;
\draw[densely dashed] (0, -0.2) ++ (30:0.8) -- ++ (0,1.4) -- ++ (30:1.2) -- ++ (0,-1.4) -- cycle;
\draw[line width=3, white] (0,-1.2) ++ (-20:1.2) -- ++ (0,2.4) -- ++ (-20:1.2) -- ++ (0,-2.4) -- cycle;
\draw[line width=3, white] (0,1) + (200:1.2) -- (0,1) -- +(-20:1.2);
\draw[gray] (0,1) + (150:.8) -- (0,1) -- +(30:.8);
\draw[gray] (0,0.7) + (30:.8) -- (0,1);
\draw[gray] (0,0) + (150:.8) -- (0,0) -- +(30:.8);
\draw[gray] (0,-0.3) + (150:.8) -- (0,0);
\draw[gray] (0,-1) + (150:.8) -- (0,-1);
\draw[gray] (0, -0.7) + (150:.8) -- (0,-1);
\draw[gray] (0,1) + (200:1.2) -- (0,1) -- +(-20:1.2);
\draw[gray] (0,1) + (200:1.2) -- (0,1) -- +(-20:1.2);
\draw[gray] (0,0) -- +(-20:1.2);
\draw[gray] (0,.3) + (-20:1.2) -- (0,0);
\draw[gray] (0,-0.7) + (-20:1.2) -- (0,-1);
\draw[gray] (0,-0.4) + (-20:1.2) -- (0,-1);
\draw[gray] (0,-1) + (200:1.2) -- (0,-1) -- +(-20:1.2);
\draw[fill] (0,1) circle (2.5pt);
\draw[fill] (0,0) circle (2.5pt);
\draw[fill] (0,-1) circle (2.5pt);
\draw (0,-1.2) ++ (200:1.2) -- ++ (0,2.4) -- ++ (200:1.2) -- ++ (0,-2.4) -- cycle;
\draw (0,0) ++ (200:1.7) node {\slantbox{0.939}{0.342}{$\graf_{\!4}$}};
\draw (0,-1.2) ++ (-20:1.2) -- ++ (0,2.4) -- ++ (-20:1.2) -- ++ (0,-2.4) -- cycle;
\draw (0,-.1) ++ (-20:1.75) node {\slantbox{0.939}{-0.342}{$\graf_{\!3}$}};
\draw[dashed] (-0.4, 1.5) -- (0.4, 1.5) -- (0.4, -1.5) -- (-0.4, -1.5) -- cycle;
\draw (0,1.5) node[above] {$W$};
\draw[thick] (0,1) + (150:.46) -- (0,1) -- +(30:.46);
\draw[thick] (0,0.8275) + (30:.46) -- (0,1);
\draw[thick] (0,0) -- +(30:.46);
\draw[thick] (0,-0.1725) + (150:.46) -- (0,0);
\draw[thick] (0,0) -- +(-20:0.42);
\draw[thick] (0,-1) -- + (150:.46);
\draw[thick] (0, -0.8275) + (150:.46) -- (0,-1);
\draw[thick] (0,-1) + (200:0.42) -- (0,-1);
\end{scope}
\end{tikzpicture}
\]
\caption{A $W$-torus which Lemma~\ref{lemma:torus module} shows to be rigid}
\end{figure}

\begin{example}
There is a unique $\varnothing$-torus, namely the class of the empty configuration in $H_0(B_0(\graf);\field)$, and this class is rigid since $F_{-1}H_0(B(\graf);\field)=0$.
\end{example}

\begin{example}\label{example:non-rigid star class}
Consider the theta graph $\thetagraph{3}$, which is the double cone on three points with vertices the cone points $v$ and $w$ as depicted in Figure~\ref{figure:theta graph}. According to the $\Theta$-relation~\cite[Lemma 5.8(4) and Definition 5.9(4)]{AnDrummond-ColeKnudsen:SSGBG}, any star class $\alpha_v$ at $v$ is equal to some star class $\alpha_w$ at $w$. 
We have defined rigidity only for well-separating $W$ because if $W$ is not well-separating in $\graf$, pulling the $\Theta$-relation back along a smoothing $\thetagraph{3}'\to \thetagraph{3}$ and then pushing forward along a graph embedding $\thetagraph{3}'\to \graf$ shows that no $W$-torus could ever have a rigidity property.

We can use the same strategy to give examples of non-rigid $W$-tori in cases where $W$ is well-separating. 
To wit, consider the graph $\thetagraph{3}^+$ of Figure~\ref{figure:theta graph}.
There is an evident graph morphism $\iota$ from $\thetagraph{3}$ to $\thetagraph{3}+$.
The vertex set $W=\{v'\}$ is well-separating in $\thetagraph{3}^+$ but the star class $\iota(\alpha_v)$ is not rigid because it is equal to the star class $\iota(\alpha_w)$ by pushing forward the $\Theta$-relation.
\end{example}
\begin{figure}[ht]
\begin{center}
\begin{tikzpicture}
\begin{scope}
\fill[black] (-.5,0) circle (2.5pt) node[left] {$v\ $};
\fill[black] (.5,0) circle (2.5pt) node[right] {$\ w$};
\draw(0,0) circle (.5cm);
\draw(.5,0)--(-.5,0);
\draw(0,-.6) node[below]{$\ \ \thetagraph{3}$};
\end{scope}
\begin{scope}[xshift=4cm]
\fill[black] (-.5,0) circle (2.5pt) node[above] {$v'\ \,$};
\fill[black] (-1.2,0) circle (2.5pt);
\fill[black] (.5,0) circle (2.5pt) node[right] {$w'$};
\draw(0,0) circle (.5cm);
\draw[line width=1.5] (-.25,.433) arc [start angle=120, end angle=240, radius=.5cm];
\draw[line width=1.5] (-.5,0) -- (-.25,0);
\draw(.5,0)--(-1.2,0);
\draw(0,-.6) node[below]{$\ \ \thetagraph{3}^+$};
\end{scope}
\end{tikzpicture}
\end{center}
\caption{The theta graph $\thetagraph{3}$ and a graph $\thetagraph{3}^+$ with a well-separating vertex set $W=\{v'\}$ and a non-rigid $W$-torus (indicated in bold)}
\label{figure:theta graph}
\end{figure}
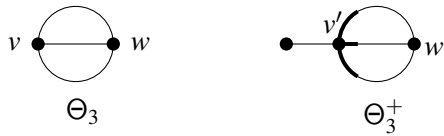

\begin{remark}[Universal Generators]
\label{remark: Universal Generators}
The definition and use of $W$-tori can be viewed as an outgrowth of the study of an important question in configuration spaces of graphs.

This question is to determine a \emph{universal presentation} for the homology of configuration spaces of graphs (or of specific classes of graphs). 
By a universal presentation we mean 
\begin{enumerate}
\item a set of homology classes (\emph{universal generators}) which generate all homology groups of configuration spaces of graphs under pushforward along graph morphisms, edge stabilization, and external products
\item a set of \emph{universal relations} which generate all relations among universal generators under the same three operations.
\end{enumerate}

For example, the empty configuration is a universal generator for degree $0$ homology while star classes and \emph{loop classes} (see \cite[Definition 5.3]{AnDrummond-ColeKnudsen:SSGBG}) are universal generators for degree $1$ homology. 
The Proof of Theorem~\ref{thm:polynomial growth} for the special case of trees in~\cite[Theorem V.3]{MaciazekSawicki:HGP1CG} amounts to showing that star classes are in fact universal generators for all homology groups for trees (it is known that this is not true for general graphs). 
In later work, the same authors show that the same statement holds for wheel graphs~\cite[\textsection 5.4]{MaciazekSawicki:NAQSG}.

One small step in the study of universal presentations is to ask quantitatively how much of the homology for a given graph or class of graphs is generated by such tori in general. 
Our proof of Proposition~\ref{proposition: degree lower bound} implies that for any graph and in any homological degree the amount of homology generated by such tori makes up a positive proportion of all homology as the weight increases. 
But this is a very crude estimate and it would be useful to have further information about this in order to know where to look for new universal generators and relations.
\end{remark}

Set $R_W=\field[E_W]$ and write $\pi_W$ for the projection from $\field[E]$ to $R_W$. If $\alpha$ is a $W$-torus, then the surjection $\field[E]\langle \alpha\rangle\to\field[E]\cdot\alpha$ factors through $\pi_W$, since any two edges in a connected component of $\graf_W$ may be connected by a path of edges disjoint from the standard representative of $\alpha$.

\begin{lemma}
\label{lemma:torus module}
Let $W$ be well-separating and let $\alpha$ be a $W$-torus. The following are equivalent.
\begin{enumerate}
\item \label{item: torus rigid} The $W$-torus $\alpha$ is rigid.
\item \label{item: expected module structure} The action of $\field[E]$ induces an isomorphism \[R_W\langle \alpha\rangle\xrightarrow{\simeq}\field[E]\cdot\alpha.\]
\item \label{item: star factor condition} The standard representative of every star factor $\alpha_v$ involves edges in at least two distinct connected components of $\graf_W$. 
\end{enumerate}
\end{lemma}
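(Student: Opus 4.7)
My plan is to filter $\fullyreduced{\graf;\field}$ by the number of half-edge generators at vertices of $W$, using the same filtration $F_\bullet$ whose top quotient defines rigidity. The $\field[E]$-action preserves this filtration. The structural identification driving everything is
\[
F_{|W|}/F_{|W|-1} \cong \bigotimes_{v\in W}\left(\reducedlocalstates{v}/\langle\varnothing\rangle\right) \otimes_\field \fullyreduced{\graf_W;\field},
\]
with differential concentrated in the $\fullyreduced{\graf_W}$ factor, since any half-edge difference at $v\in W$ has boundary in $F_{|W|-1}$. Passing to homology via K\"unneth and Theorem~\ref{thm:module iso} gives
\[
H_*(F_{|W|}/F_{|W|-1}) \cong \bigotimes_{v\in W}\left(\reducedlocalstates{v}/\langle\varnothing\rangle\right) \otimes_\field H_*(B(\graf_W);\field),
\]
and because we sit at the top of the filtration, $F_{|W|}H/F_{|W|-1}H$ embeds here as a subspace. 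Consequently $\alpha$ is rigid precisely when the image $[\bar a]$ of its standard representative is nonzero.

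A direct computation yields $\bar a_v = h^v_{12}\otimes(e^v_3 - e^v_1) + h^v_{13}\otimes(e^v_1 - e^v_2)$ in $(\reducedlocalstates{v}/\langle\varnothing\rangle)\otimes R_W^{(1)}$, where $R_W^{(1)}\subset H_0(B(\graf_W);\field)=R_W$ is the weight-one part. Expanding the external product $[\bar a]$ in the basis of the half-edge tensor factor, each basis coefficient is a product $\prod_v(\pi_W(e_{i_v}^v)-\pi_W(e_{j_v}^v))$ of edge-class differences in $R_W$. Because $R_W$ is a polynomial ring, hence an integral domain, such a product vanishes iff some factor vanishes, and $\bar a_v$ itself vanishes precisely when all three edges $e^v_1,e^v_2,e^v_3$ lie in a single component of $\graf_W$, that is, when condition~\ref{item: star factor condition} fails at $v$. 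This establishes \ref{item: torus rigid} $\Leftrightarrow$ \ref{item: star factor condition}.

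For \ref{item: star factor condition} $\Rightarrow$ \ref{item: expected module structure}, I choose an index tuple for which the corresponding basis coefficient of $[\bar a]$ is nonzero in $R_W$. The $\field[E]$-action on the top associated graded factors through $R_W$ (the relations $e\sim_W e'$ already hold in $H_0(B(\graf_W);\field)$), so any $r\in R_W$ with $r\alpha=0$ in homology would annihilate $[\bar a]$ and hence this nonzero coefficient, forcing $r=0$ by the domain property.

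The main obstacle is the reverse implication \ref{item: expected module structure} $\Rightarrow$ \ref{item: star factor condition}, where I must exhibit an explicit nonzero annihilator. Assuming \ref{item: star factor condition} fails at some $v\in W$, I pass to the coarser filtration by half-edges at $v$ alone. Because the components of $\graf_W$ refine those of $\graf_v$, the analogous computation forces the image of $a$ in $(\reducedlocalstates{v}/\langle\varnothing\rangle)\otimes H_*(B(\graf_v);\field)$ to vanish; the long exact sequence associated to $\fullyreduced{\graf_v;\field}\subseteq\fullyreduced{\graf;\field}$ then produces a representative $\alpha^{(v\to\varnothing)}$ of $\alpha$ with $\varnothing$ at $v$. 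For any two edges $e,e'$ incident to $v$, the Leibniz rule gives $(e-e')\cdot\alpha = \partial\bigl(h^v_{e,e'}\cdot\alpha^{(v\to\varnothing)}\bigr)=0$ in homology. The well-separating hypothesis is used exactly here: it guarantees that we may choose such $e,e'$ in distinct components of $\graf_W$, so that $\pi_W(e)-\pi_W(e')$ is the required nonzero element of $R_W$ annihilating $\alpha$, contradicting \ref{item: expected module structure}.
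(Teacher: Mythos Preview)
Your proof is essentially correct and circles the same ideas as the paper's, though organized differently: the paper proves the cycle $(1)\Rightarrow(2)\Rightarrow(3)\Rightarrow(1)$, while you establish $(1)\Leftrightarrow(3)$ directly via the top associated graded and then $(2)\Leftrightarrow(3)$ separately. Your $(1)\Leftrightarrow(3)$ argument, identifying rigidity with nonvanishing of $[\bar a]$ via the inclusion of $E^\infty_{|W|}$ into $E^1_{|W|}$, is cleaner than the paper's $(3)\Rightarrow(1)$, which tracks a leading coefficient through an explicit analysis of how boundaries can alter the chain representative.

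One step needs more care. In your $\neg(3)\Rightarrow\neg(2)$ argument you assert that ``the analogous computation forces the image of $a$ in $(\reducedlocalstates{v}/\langle\varnothing\rangle)\otimes H_*(B(\graf_v);\field)$ to vanish.'' The analogy is not quite right as stated: in the $W$-filtration the coefficients of $[\bar a]$ lived in $H_0(B(\graf_W))=R_W$, where vanishing is purely a component question, but here the coefficients $[(e^v_i-e^v_j)\prod_{w'}a_{w'}]$ live in $H_{|W|-1}(B(\graf_v))$, and merely knowing that $e^v_i,e^v_j$ lie in the same component of $\graf_v$ is not enough. What actually makes it work is that $\prod_{w'\in W\setminus\{v\}}\alpha_{w'}$ is a $(W\setminus\{v\})$-torus \emph{in $\graf_v$}, and $(\graf_v)_{W\setminus\{v\}}=\graf_W$; the remark preceding the lemma (that the $\field[E]$-action on a $W'$-torus factors through the corresponding $R_{W'}$) therefore gives factoring through $R_W$, not merely through $\field[E_{\{v\}}]$. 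Since $\pi_W(e^v_i-e^v_j)=0$ by the failure of~(3), the coefficients vanish. Your sentence about refinement of components gestures toward this but does not articulate it; the refinement statement alone would only give vanishing in $\field[E_{\{v\}}]$, which is insufficient.

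The paper reaches the same intermediate conclusion (that $\alpha$ admits a representative with $\varnothing$ at $v$) by invoking the $\Theta$-relation to slide the star factor $\alpha_v$ to a star class at another vertex in the same $\graf_W$-component, rather than via your associated-graded computation. The endgame---multiplying the new representative by $h^v_{e,e'}$ and using the well-separating hypothesis to produce a nonzero annihilator in $R_W$---is identical in both arguments.
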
 
The third condition is a little technical but is also easier than the other two to check. 
In some sense it says that to guarantee rigidity, it is enough to avoid the class of ``obvious'' pitfalls illustrated in Example~\ref{example:non-rigid star class}.
In particular, it has the following corollary.
\begin{corollary}
\label{cor: existence of rigid tori}
For any well-separating subset $W$, there is a rigid $W$-torus.
\end{corollary}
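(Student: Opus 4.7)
The plan is to exploit Lemma \ref{lemma:torus module}, building a rigid $W$-torus by verifying condition (3) for each star factor. Fix $v \in W$. Since $W$ is well-separating, the open star of $v$ meets at least two connected components of $\graf_W$, which means we may select half-edges $h, h' \in H(v)$ such that $e(h)$ and $e(h')$ lie in distinct connected components of $\graf_W$. Because $W \subseteq V^{\mathrm{ess}}$, we have $d(v) \geq 3$, so we may fix any third half-edge $h'' \in H(v) \setminus \{h, h'\}$.

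The three chosen half-edges at $v$ determine a graph morphism $\stargraph{3} \to \graf$ sending the star vertex to $v$ and the three edges of $\stargraph{3}$ to pairwise disjoint short arcs emanating from $v$ along $h$, $h'$, and $h''$. The pushforward of the generator of $H_1(B_2(\stargraph{3});\field)$ is a star class $\alpha_v$ whose standard representative involves precisely the edges $e(h), e(h'), e(h'')$. By construction, $e(h)$ and $e(h')$ lie in distinct components of $\graf_W$, so condition (\ref{item: star factor condition}) of Lemma \ref{lemma:torus module} is satisfied by each $\alpha_v$.

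To form the external product we must ensure that the finite family of graph morphisms $\{\stargraph{3} \to \graf\}_{v \in W}$ has pairwise disjoint images. Since a star class is represented using arbitrarily short arcs at $v$, and since distinct elements of $W$ are distinct points of $\graf$, we may shrink all the arcs simultaneously so that if two vertices $v, v' \in W$ happen to be endpoints of a common edge, their chosen arcs along that edge remain disjoint. With this choice the external product $\alpha = \prod_{v \in W} \alpha_v \in H_{|W|}(B(\graf); \field)$ is a well-defined $W$-torus, and applying Lemma \ref{lemma:torus module} (condition (\ref{item: star factor condition}) implies condition (\ref{item: torus rigid})) yields rigidity.

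The construction is essentially routine given the preceding Lemma; the only step requiring any care is the disjointness of images, where one must verify that shrinking arcs does not accidentally force different star factors to lie in the same edge or cross one another. This is straightforward because the definition of a star class uses only a germ of each edge at $v$, and the finiteness of $W$ permits a global choice of arc length small enough to resolve all pairwise conflicts simultaneously. No additional input beyond well-separation and essentiality is needed.
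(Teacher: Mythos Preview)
Your proof is correct and follows the same approach as the paper: use well-separation to choose each star factor so that condition~(\ref{item: star factor condition}) of Lemma~\ref{lemma:torus module} holds, then invoke the implication $(\ref{item: star factor condition})\Rightarrow(\ref{item: torus rigid})$. Your version is more detailed than the paper's (which is only one sentence), in particular spelling out the existence of the third half-edge via essentiality and the disjointness of images needed for the external product; these points are left implicit in the original.
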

\begin{proof}
Choose a star class $\alpha_v$ at each $v\in W$. Since $W$ is well-separating, the star class $\alpha_v$ may be chosen to satisfy the star factor condition~\eqref{item: star factor condition}. 
\end{proof}
\begin{remark}
For an arbitrary set of vertices $W$, one could instead filter according to a maximal well-separating subset $W_0\subseteq W$. The resulting generalized rigid $W$-tori do not exist for arbitrary $W$, $\field$, and $\graf$; however, if $\field=\mathbb{Z}/2\mathbb{Z}$ or if $\graf$ is planar, then such classes exist for any $W$. 
To construct one, use Corollary~\ref{cor: existence of rigid tori} to generate a rigid $W_0$-torus and form the external product with a star class at each vertex in $W\setminus W_0$.
Either assumption implies that this external product is nonzero, and the proof of rigidity proceeds along similar lines.
\end{remark}

In the situation of interest, we can guarantee that $W$ is well-separating.

\begin{lemma}
\label{lemma: maximal W are well-separating}
If $W\subseteq \essential$ with $|W|=i$ is such that $
\Ramos{\graf}{W} = \Ramos{\graf}{i},$
then either $W$ is well-separating or $i=\Ramos{\graf}{i}=1$.
\end{lemma}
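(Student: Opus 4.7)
The plan is to argue by contradiction, treating the cases $i=1$ and $i \geq 2$ separately.

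If $i=1$ and $W = \{v\}$ is non-well-separating, then by definition every edge incident to $v$ lies in a single component of $\graf_W$; by connectedness of $\graf$ this is the unique edge-containing component, so $\Ramos{\graf}{W} = 1$, and maximality then forces $\Ramos{\graf}{1} = 1$, placing us in the exceptional case.

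For $i \geq 2$, I suppose $W$ is maximal and non-well-separating at some $v \in W$, set $W' = W \setminus \{v\}$ (which is nonempty), and let $D$ be the component of $\graf_{W'}$ containing $v$. The non-well-separation condition at $v$ is equivalent to the statement that exploding $v$ in $\graf_{W'}$ does not split $D$ into multiple edge-containing pieces, whence $\Ramos{\graf}{W'} = \Ramos{\graf}{W}$. The desired contradiction will follow if I can produce $v'' \in \essential \setminus W'$ with $v'' \neq v$ whose explosion in $\graf_{W'}$ does produce additional edge-components, because then $W'' = W' \cup \{v''\}$ satisfies $\Ramos{\graf}{W''} \geq \Ramos{\graf}{W'} + 1 > \Ramos{\graf}{W}$, contradicting maximality.

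The guiding observation is that any vertex of $\graf_{W'}$ with a degree-one neighbor is such a splitting vertex (exploding it isolates the edge to that leaf), so $v$ has no degree-one neighbor in $\graf_{W'}$ and any essential vertex that does will automatically differ from $v$. The main step, and principal obstacle, is therefore to exhibit an essential $v'' \in \essential \setminus W'$ with a leaf neighbor in $\graf_{W'}$. For this I appeal to normality: $\graf$ is connected, smooth, and has an essential vertex, which together force every vertex to have valence $1$ or $\geq 3$. I will argue that some edge of $\graf$ must join a vertex $w \in W'$ directly to a vertex $v'' \in \essential \setminus W'$: otherwise every edge incident to $W'$ would terminate in $W'$ or at a valence-$1$ vertex, and iteratively the connected component of $W'$ in $\graf$ would be confined to $W'$ together with its valence-$1$ neighbors, contradicting connectedness of $\graf$ since $v \in \essential \setminus W'$ must also lie in that component. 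Given such an edge $wv''$, the exploded copy of $w$ in $\graf_{W'}$ is a leaf neighbor of $v''$, completing the proof.
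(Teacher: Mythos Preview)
Your proof is correct and follows essentially the same strategy as the paper's: both observe that removing the non-well-separating vertex $v$ leaves $\Delta_{\graf}^{W\setminus\{v\}}=\Delta_{\graf}^{W}$, and both then produce a different essential vertex adjacent to some $w\in W\setminus\{v\}$ whose substitution for $v$ strictly increases $\Delta$, contradicting maximality.

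The one genuine difference is in how the replacement vertex is located. The paper argues via reduced edge paths in $\graf$ from $v$ to other vertices of $W$: such a path, being reduced in a graph without bivalent vertices, passes only through essential vertices, and its penultimate vertex furnishes the desired $v'$ unless every edge at $v$ already terminates in $W$ or at a leaf, which contradicts non-well-separation. Your argument instead observes that if no edge of $\graf$ joined $W'=W\setminus\{v\}$ to $\essential\setminus W'$, then $W'$ together with its valence-$1$ neighbours would be closed under adjacency and hence (by connectedness) all of $V(\graf)$, contradicting $v\in\essential\setminus W'$. This closed-set argument is shorter and avoids the path analysis; it also shows directly, via the observation that $v$ has no degree-one neighbour in $\graf_{W'}$, that the replacement vertex $v''$ is distinct from $v$. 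The paper's route, on the other hand, localises $v'$ to the particular component of $\graf_W$ meeting the star of $v$, a refinement not actually needed for the contradiction.
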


Assuming these lemmas, we can establish the lower bound.

\begin{proof}[Proof of Proposition~\ref{proposition: degree lower bound}]

Let $W\subseteq \essential$ be a subset with $|W|=i$ such that $\Ramos{\graf}{W}=\Ramos{\graf}{i}$. 
Except in the special case $i=\Ramos{\graf}{i}=1$, Lemma~\ref{lemma: maximal W are well-separating} guarantees that $W$ is well-separating, so Corollary~\ref{cor: existence of rigid tori} supplies a rigid $W$-torus $\alpha$. 
The dimension of $H_i(B_k(\graf);\field)$ is no less than that of the submodule $\field[E]\cdot\alpha$, which is isomorphic to $R_W\langle \alpha\rangle$ by Lemma~\ref{lemma:torus module}.
Suppose $\alpha$ is in $H_i(B_{k_0}(\graf);\field)$.
Then the $\field$-dimension of the space of monomials in $R_W$ of weight $k-k_0$ is exactly $\binom{\Ramos{\graf}{W}+(k-k_0)-1}{\Ramos{\graf}{W}-1}$ and hence $\field[E]\cdot\alpha$ has polynomial growth of degree $\Ramos{\graf}{W}-1$.

For the special case $i=\Ramos{\graf}{i}=1$, we must verify that $H_1(B_k(\graf);\field)$ is eventually non-zero, which is well known---see \cite[Theorem 3.16]{KoPark:CGBG}, for example.
\end{proof}

It remains to prove Lemmas~\ref{lemma:torus module} and \ref{lemma: maximal W are well-separating}. In the proof, we make use of the spectral sequence arising from the filtration $F_{\ell}H_*(B(\graf);\field)$ introduced above, which we denote by $E^r_{p,q}$. This spectral sequence is a spectral sequence of $\field[E]$-modules.

\begin{proof}[Proof of Lemma~\ref{lemma:torus module}]

The unique $\varnothing$-torus is rigid, and clearly $\field[E]\cdot[\varnothing]\cong R_\varnothing$. In the remainder of the proof, we take $W\neq\varnothing$. 

We prove first that~\eqref{item: torus rigid} implies~\eqref{item: expected module structure}.
Suppose that $\alpha$ is rigid, and write \[[\alpha]=\alpha + F_{|W|-1}H_{|W|}(B(\graf);\field)\in E^\infty_{|W|,0}\] for the resulting element in the associated graded. 
Since $\alpha$ is rigid, $[\alpha]$ cannot be zero.
We wish to show that the lefthand map in the composite \[R_W\langle\alpha\rangle\to \field[E]\cdot\alpha\to \field[E]\cdot [\alpha]\] is an isomorphism. Since this map is surjective, it suffices to show that the composite is injective. We will show that $E^\infty_{|W|,0}$ is a submodule of a free $R_W$-module $R_W\langle X\rangle$ for some set $X$. Writing $\{[\alpha]_x\}_{x\in X}$ for the coordinates of $[\alpha]$ in this basis, the kernel of the map in question is $\bigcap_{x\in X}\mathrm{Ann}_{R_W}([\alpha]_x)$. Since $[\alpha]\neq0$ and $R_W$ has no zero divisors, one of the terms of this intersection is zero, implying the claim.

Our spectral sequence is concentrated in the first quadrant, so we have the containment $E^\infty_{|W|,0}\subseteq E^2_{|W|,0}$, and, since $E^1_{|W|+1,0}=0$, we have the further containment $E^2_{|W|,0}\subseteq E^1_{|W|,0}$. Both are containments of $\field[E]$-submodules. Since $\graf$ has no isolated vertices, there is a $\field[E]$-linear isomorphism \[E^1_{|W|,0}\cong H_0(B(\graf_W);\field)\otimes \field\langle X\rangle\cong R_W\otimes \field\langle X\rangle,\] where $X$ is the set of generators of $\fullyreduced{\graf}$ of the form $\bigotimes_{v\in W} h^v$ with $h^v$ a difference of half-edge generators
 at $v$. This module is the desired $R_W$-free module.

We prove that~\eqref{item: expected module structure} implies~\eqref{item: star factor condition} by proving the contrapositive.
Let $\alpha_v$ be a star factor represented by a graph morphism $\iota:\stargraph{3}\to \graf$ whose image intersects only one connected component of $\graf_W$, and extend $\iota$ to a graph morphism $\widehat{\iota}:\thetagraph{3}'\to \graf$ with the same property, where $\thetagraph{3}'$ is a subdivision of $\thetagraph{3}$ (in other words, there is a smoothing $f:\thetagraph{3}'\to\thetagraph{3}$).
By pulling back via $f$ the $\Theta$-relation on $\thetagraph{3}$ as seen in Example~\ref{example:non-rigid star class}, we have the $\Theta$-relation on $\thetagraph{3}'$ as well and so $\alpha_v=\alpha_w$ for some star class $\alpha_w$ at $w$ if we write $w$ for the image of the other essential vertex of $\thetagraph{3}'$ under $\widehat\iota$.
Replacing $\alpha_v$ by $\alpha_w$ in the external product defining $\alpha$, we conclude that $\alpha$ has a representative $a$ that does not involve $v$. Since $W$ is well-separating, there are edges $e_i$ and $e_j$ at $v$ lying in distinct connected components of $\graf_W$, and appending the generator $h_{ij}$ to $a$ produces an element whose boundary is $(e_i-e_j)\alpha$. Thus, the map $R_W\langle \alpha\rangle\to \field[E]\cdot\alpha$ has nonzero kernel.

Finally, we prove that~\eqref{item: star factor condition} implies~\eqref{item: torus rigid}. Throughout, products and tensor products are indexed by $W$.
At each $v\in W$, choose a basis for the half-edge generators at $v$ such that $\alpha_v$ is represented by $h^v_{12}(e^v_1-e^v_3)-h^v_{13}(e^v_1-e^v_2)$.
By our assumption~\eqref{item: star factor condition}, we may assume that $e^v_1$ and $e^v_3$ intersect distinct connected components of $\graf_W$. We claim that $\bigotimes h^v_{12}$ appears with a nonzero $\field[E]$-coefficient in every chain representative of $\alpha$, implying in particular that $\alpha\notin F_{|W|-1}H_{|W|}(B(\graf);\field)$, as desired.

We begin by examining the coefficient $\prod (e^v_1-e^v_3)$ of $\bigotimes h^v_{12}$ in the standard representative $a$ of $\alpha$. 
We apply $\pi_W$ to this coefficient to obtain an element of $R_W$. 
By construction, $\pi_W(e^v_1-e^v_3)\neq0$ for each $v$.
Choose an ordering of $\pi_0(\graf_W)$ and write $\pi_W(e^v_1-e^v_3)=\pm (\graphfont{\Delta}^v_i-\graphfont{\Delta}^v_j)$ with $\graphfont{\Delta}^v_i<\graphfont{\Delta}^v_j$ in the ordering.
The leading term $\prod \graphfont{\Delta}^v_i$ appears with coefficient $\pm 1$ in the polynomial $\pi_W(\prod e^v_1-e^v_3)$, so $\prod (e^v_1-e^v_3)\neq 0$, establishing the claim for the standard representative $a$.

Suppose now that $a-a'=\partial c$. In our preferred basis for the tensor product, $c$ is necessarily a sum of elementary tensors of $(|W|+1)$ half-edge generators, some of which are of the form
\begin{equation}
\label{eq: shape of maximal elementary tensor}
p(E)h_{ij}\otimes  \bigotimes h^v_{12}
\end{equation}
for some half-edge generator $h_{ij}$ at a vertex away from $W$.
Terms not of this form contain strictly fewer than $|W|$ tensor factors of the form $h^v_{12}$, and the boundary of such a term is a sum of terms containing strictly fewer than $|W|$ tensor factors of the form $h^v_{12}$.
On the other hand, the boundary of~\eqref{eq: shape of maximal elementary tensor} is a sum of terms containing strictly fewer than $|W|$ tensor factors of the form $h^v_{12}$, together with the term
\[
p(E)(e_i-e_j)\bigotimes h^v_{12}.
\]
By our assumption on $h_{ij}$, $e_i$ and $e_j$ lie in the same connected component of $\graf_W$, so $\pi_W(p(E)(e_i-e_j))=0$.
Thus, the coefficient of $\bigotimes h^v_{12}$ in $a'$ coincides with the coefficient in $a$ modulo $\ker\pi_W$, so this coefficient is nonzero by the previous paragraph.
\end{proof}

\begin{remark}
The same techniques serve to establish a version of Lemma \ref{lemma:torus module} for toric classes with some factors given by loop classes. 
\end{remark}

\begin{proof}[Proof of Lemma \ref{lemma: maximal W are well-separating}]
If $|W|=0$, or if $|W|=1$ and $\Ramos{\graf}{W}>1$, then the claim is obvious. If $|W|>1$ and $W$ is not well-separating, then there is a vertex $v\in W$ whose open star intersects only one connected component of $\graf_W$. We claim that there is a second vertex $v'\in \essential\setminus W$ lying in this component of $\graf_W$ and sharing an edge $e$ with some vertex of $W\setminus\{v\}$. 

Given such a vertex, we replace $v$ with $v'$ in $W$ to obtain a set $W'$ with $\Ramos{\graf}{W'}\ge \Ramos{\graf}{W}+1$. Indeed, the relations ${\sim}_W$ and ${\sim}_{W\setminus\{v\}}$ are identical, but the equivalence class of $e$ under the relation ${\sim}_{W}$ is split into at least two distinct equivalence classes under ${\sim}_{W'}$. Thus, $\Ramos{\graf}{W}$ does not achieve the maximum value $\Ramos{\graf}{i}$.

Assume for contradiction that such a $v'$ does not exist, and suppose there is a reduced edge path in $\graf$ beginning at $v$, terminating at some other vertex of $W$, and passing through no third vertex of $W$. Such a path is necessarily contained in the connected component of $\graf_W$ containing the open star of $v$. Since this path is reduced and $\graf$ has no bivalent vertices, the path passes only through essential vertices. We obtain a contradiction of the non-existence of $v'$ unless all such paths have length one; in other words, every edge at $v$ terminates either at a vertex of $W$ or a vertex of valence $1$. Therefore, either the valence of $v$ is $1$, or the open star of $v$ intersects more than one component of $\graf_W$. Either is a contradiction, so we conclude that no edge path exists between $v$ and any other vertex of $W$. Since $|W|>1$, it follows that $\graf$ is disconnected, a contradiction.
\end{proof}

\subsection{Upper bound}

Let $\graf$ be a graph and $\field$ be a field. Given a subset $W\subseteq\essential$ and a vertex $v$, we write $E_W(v)\subseteq E_W$ for the set of equivalence classes of edges edges adjacent to $v\in V$. We set \[\fullyreducedsub{W}{\graf;\field}\coloneqq \fullyreduced{\graf_W;\field}\otimes_{\field[E]}\field[E_W].\] The action of $\field[E]$ on $\field[E_W]$ is weight-respecting so the complex $\fullyreducedsubnoarg{W}$ is weight-graded. Since $\fullyreducedsubnoarg{\varnothing}$ is simply the reduced \'Swi\k{a}tkowski complex over $\field$, Proposition~\ref{proposition: degree upper bound} is a special case of the following result.

\begin{proposition}\label{proposition: degree count with sinks}
Let $\graf$ be smooth and contain no isolated vertices. 
For any $W\subseteq \essential$, the dimension of $H_i(\fullyreducedsub{W}{\graf;\field})$ is eventually polynomial in the weight of degree at most $\Delta_\graf^i-1$.
\end{proposition}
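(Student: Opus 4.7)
My plan is to prove Proposition~\ref{proposition: degree count with sinks} by downward induction on $|\essential \setminus W|$, so that the base case is $W = \essential$ and the inductive step removes one essential vertex at a time from $W$. Throughout I would maintain the inductive hypothesis that the bound holds for all strictly larger $W' \subseteq \essential$.

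For the base case, when $W = \essential$, the graph $\graf_W$ is a disjoint union of single-edge components: intervals arising from edges incident to at least one essential vertex (together with any pre-existing edge-interval components of $\graf$) and cycles arising from cycle components of $\graf$ (each carrying its single bivalent vertex). The reduced \'{S}wi\k{a}tkowski complex splits as a tensor product over these components, with intervals contributing $\field[e]$ concentrated in degree $0$ and cycles contributing $\field[e]\otimes(\field\oplus \field\langle h_{12}\rangle[1]\{1\})$ with vanishing differential. Since every $\sim_W$-equivalence class in this extreme case consists of a single edge, the tensor with $\field[E_W]$ is trivial, and $H_*(\fullyreducedsubnoarg{W})$ can be read off directly as an external product of the component homologies. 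I would then verify the growth bound by comparing the resulting combinatorial expression against $\Delta_\graf^i$, using the weak monotonicity of $\Delta_\graf^j$ in $j$ within its domain of finiteness.

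For the inductive step, pick $v \in \essential \setminus W$ and apply Proposition~\ref{proposition: les vertex reduced} to $\graf_W$ at $v$. Since each term in that exact sequence is a free $\field[E]$-module, tensoring over $\field[E]$ with $\field[E_W]$ preserves exactness and produces
\[
0 \to C_v \to \fullyreducedsub{W}{\graf;\field} \to \bigoplus_{h \in H(v)\setminus\{h_0\}} C_v[1]\{1\} \to 0,
\]
where $C_v \coloneqq \fullyreduced{\graf_{W\cup\{v\}};\field}\otimes_{\field[E]}\field[E_W]$. The associated long exact sequence in homology yields
\[
\dim H_i(\fullyreducedsub{W}{\graf;\field})_k \le \dim H_i(C_v)_k + (d(v)-1)\dim H_{i-1}(C_v)_{k-1}.
\]
To bound $H_*(C_v)$ I would rewrite it as $\fullyreducedsub{W\cup\{v\}}{\graf;\field}\otimes_{\field[E_{W\cup\{v\}}]}\field[E_W]$ and note that the surjection $\field[E_{W\cup\{v\}}]\twoheadrightarrow\field[E_W]$ has kernel generated by a sequence of differences of variables identifying equivalence classes that split only after exploding $v$. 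Such differences form a regular sequence inside a polynomial ring, so iterated Koszul-type short exact sequences $0\to M\xrightarrow{x-y}M\to M/(x-y)M\to 0$ propagate the inductive bound for $H_*(\fullyreducedsub{W\cup\{v\}}{\graf;\field})$ through to $H_*(C_v)$, with each quotient stage shifting the contribution to at worst the next homological degree.

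The main obstacle is the bookkeeping in the last paragraph: combining the Koszul long exact sequences with the original long exact sequence requires carefully tracking how consecutive degrees contribute and then invoking the monotonicity $\Delta_\graf^j\le\Delta_\graf^{j+1}$ (valid for $j+1\le|V^{\geq 2}|$) to collapse the various maxima back to $\Delta_\graf^i-1$, rather than to a larger invariant involving $|W|$. Verifying this collapse is the technical heart of the argument.
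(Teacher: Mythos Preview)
Your approach shares the paper's inductive skeleton (downward induction on $|\essential\setminus W|$ with base case $W=\essential$) but executes the inductive step differently. The paper introduces a dedicated bicomplex spectral sequence (Lemma~\ref{lem:ss of bicomplex}) whose $E^2$ page isolates the local contribution at $v$ via the complex $\Wvcomplex=\fullyreduced{\stargraph{v};\field}\otimes_{\field[E(\stargraph{v})]}R_{W,v}$: the $q=0$ row collapses directly to $H_i(\fullyreducedsub{W\sqcup\{v\}}{\graf;\field})$, while the $q=1$ row is controlled by a technical growth lemma (Lemma~\ref{lem:technical growth}) that ultimately relies on finite free resolutions over $R_{W,v}$ and the Quillen--Suslin theorem. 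You instead factor the same passage into two elementary moves: first the vertex-explosion short exact sequence of Proposition~\ref{proposition: les vertex reduced} (tensored down to $\field[E_W]$) to reach $C_v$, and then a chain of Koszul short exact sequences for the regular sequence of variable differences generating $\ker(\field[E_{W\cup\{v\}}]\twoheadrightarrow\field[E_W])$ to reach $\fullyreducedsub{W\cup\{v\}}{\graf;\field}$.

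Your route is more elementary---no spectral sequences, no Tor, no Quillen--Suslin---and the regular-sequence claim is immediate since $\fullyreducedsub{W\cup\{v\}}{\graf;\field}$ is free over the polynomial ring $\field[E_{W\cup\{v\}}]$ in each degree. The trade-off is exactly the bookkeeping you flag: each Koszul quotient feeds $H_{i-1}$ into the bound for $H_i$, so after $r$ iterations and the vertex-explosion step you are summing contributions from a range of homological degrees below $i$, all of which must then be capped by $\Delta_\graf^i-1$ via monotonicity. The paper's spectral-sequence packaging absorbs this bookkeeping into the single statement that $\Wvcomplex$ is concentrated in degrees $0$ and $1$, but the underlying estimates are the same.
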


\begin{remark}
Geometrically, the weight $k$ subcomplex of $\fullyreducedsubnoarg{W}(\graf)$ corresponds to the space of configurations of $k$ points in $\graf$ which are permitted to collide at vertices in $W$---see \cite[\textsection 2]{ChettihLuetgehetmann:HCSTL} and \cite[\textsection 1.1]{Ramos:CSGCTPC}.

Note in particular that the behavior of $\fullyreducedsub{W}{\graf}$ is different from that of $\fullyreduced{\graf_W}$. 
In both cases, there is no local homological information near $W$, but particles pass freely through vertices in $W$ in the former while in the latter they avoid $W$. 
This is easiest to see in the star graph $\stargraph{n}$, where the former models a contractible space and the latter a space with $n$ contractible components.
\end{remark}

The strategy of the proof of Proposition \ref{proposition: degree count with sinks} will be to show that the desired growth rate is already achieved on the $E^2$ page of a certain spectral sequence converging to the homology of $\fullyreducedsub{W}{\graf;\field}$. In order to introduce this spectral sequence, we require the following notation. Given a vertex $v\in V\setminus W,$ we write $\stargraph{v}$ for a star graph of maximal valence equipped with a graph morphism $\iota:\stargraph{v}\to\graf$ sending the star vertex to $v$. Setting $\Wvring{}=\field[E_W(v)]$, we write $\Wvcomplex$ for the chain complex 
\[
\Wvcomplex\coloneqq
\fullyreduced{\stargraph{v};\field}\otimes_{\field[E(\stargraph{v})]}\Wvring
\]
of $\Wvring$-modules. 
The chain complex $\Wvcomplex$ is finitely generated over $\Wvring$ because $\fullyreduced{\stargraph{v};\field}$ is finitely generated over $\field[E(\stargraph{v})]$.

\begin{lemma}\label{lem:ss of bicomplex}
Let $v$ be an essential vertex of $V\setminus W$ in $\graf$.
There is a convergent, homological, weight-graded spectral sequence of $\field[E_W]$-modules

\[E^2_{p,q}\cong 
H_{p}\left(\fullyreducedsub{W}{\graf_v;\field}\otimes_{\Wvring} H_q(\Wvcomplex)\right)\implies H_{p+q}(\fullyreducedsub{W}{\graf;\field}).\]
\end{lemma}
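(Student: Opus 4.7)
The plan is to exhibit an isomorphism of differential $\field[E_W]$-modules
\[
\fullyreducedsub{W}{\graf;\field}\;\cong\;\fullyreducedsub{W}{\graf_v;\field}\otimes_{\Wvring}\Wvcomplex
\]
and then to take the spectral sequence of the resulting filtration of a tensor product of complexes over $\Wvring$.

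To construct the isomorphism, I would isolate the tensor factor $\reducedlocalstates{v}$ in the definition of the reduced \'{S}wi\k{a}tkowski complex. Since $v\notin W$, the vertex $v$ is preserved in $\graf_W$, while the leaves of $\stargraph{v}$ and the valence-$1$ vertices of $(\graf_v)_W$ created by exploding $v$ all contribute only trivial $\reducedlocalstates{\cdot}=\langle\varnothing\rangle$ factors. Hence the underlying bigraded $\field[E_W]$-modules on the two sides of the display agree. To match differentials, I would decompose $\partial=\partial_v+\partial_{\neq v}$ according to the vertex at which the acting half-edge generator sits (with appropriate Koszul signs depending on a chosen ordering of the vertices). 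The piece $\partial_v$ sends $h_i-h_j\in\reducedlocalstates{v}$ to $e(h_i)-e(h_j)$, which after projection to $\field[E_W]$ lies in $\Wvring=\field[E_W(v)]$; under the combinatorial identification of $H(v)$ with the half-edges at the central vertex of $\stargraph{v}$ induced by $\iota$, this agrees with the differential of $\Wvcomplex$. The remaining piece $\partial_{\neq v}$ is precisely the differential of $\fullyreducedsub{W}{\graf_v;\field}$.

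With the identification in hand, I would filter by the homological degree in the $\Wvcomplex$ factor, which takes only the values $0$ and $1$. The resulting spectral sequence is $\field[E_W]$-linear, weight-graded, and bounded in the $q$-direction, so it converges. The $E^0$ vertical differential is $1\otimes\partial_v$. Next I would observe that $\fullyreducedsub{W}{\graf_v;\field}$ is free over $\field[E_W]$ on tensor products of half-edge differences at vertices other than $v$, and that $\field[E_W]$ is free over $\Wvring$ on monomials in $E_W\setminus E_W(v)$; consequently $\fullyreducedsub{W}{\graf_v;\field}$ is flat over $\Wvring$ in each bidegree. This flatness gives
\[
E^1_{p,q}\;=\;\fullyreducedsub{W}{\graf_v;\field}_p\otimes_{\Wvring} H_q(\Wvcomplex),
\]
with horizontal differential $\partial_{\neq v}\otimes 1$, and passing to the $p$-homology produces the claimed $E^2$. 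The main point requiring real care is the initial identification of chain complexes: one must check that the edges appearing in $\partial_v$ indeed project into $E_W(v)$, and that the local differentials at $v$ on the two sides of the display agree after all relevant changes of scalars.
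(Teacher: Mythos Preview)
Your proposal is correct and follows essentially the same approach as the paper: decompose the differential as $\partial_v+\partial_{\neq v}$, identify $\fullyreducedsub{W}{\graf;\field}$ with the tensor product $\fullyreducedsub{W}{\graf_v;\field}\otimes_{\Wvring}\Wvcomplex$ (the paper phrases this via the decomposition $\fullyreduced{\graf}\cong\fullyreduced{\graf_v}\otimes_{\mathbb{Z}[E(v)]}\fullyreduced{\stargraph{v}}$ before base change), and run the bicomplex spectral sequence using freeness of $\fullyreducedsub{W}{\graf_v;\field}$ over $\Wvring$ to pass from $E^0$ to $E^1$ via K\"unneth. Your explicit verification that $\field[E_W]$ is free over $\Wvring$ on monomials in $E_W\setminus E_W(v)$ makes precise what the paper compresses into ``$\field[E_W]$-free, and hence $\Wvring$-free.''
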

\begin{proof}
We write the differential of $\fullyreducedsub{W}{\graf; \field}$ as $\partial = \partial_v + \partial_-$, where $\partial_v$ is the sum of the terms of the differential involving half-edges at $v$.
These two operators square to zero individually and commute, giving $\fullyreducedsub{W}{\graf; \field}$ the structure of a bicomplex of $\field[E_W]$-modules. The desired spectral sequence is the spectral sequence of this bicomplex with zeroth differential $\partial_v$ and first differential $\partial_-$---see Figure~\ref{figure: two row spectral sequence}. Since this bicomplex is concentrated in finitely many bidegrees, the spectral sequence collapses and in particular converges.

Using the decomposition $\fullyreduced{\graf}\cong \fullyreduced{\graf_v}\otimes_{\mathbb{Z}[E(v)]} \fullyreduced{\stargraph{v}}$, we obtain the isomorphism 
\[
E^0\cong \left(\fullyreducedsub{W}{\graf_v;\field}\otimes_{\Wvring} \Wvcomplex,\, 1\otimes\partial\right)
\] of trigraded $\field[E_W]$-modules, where the homological bidegree on the righthand side is the natural bigrading of the tensor product. Since $\fullyreducedsub{W}{\graf_v;\field}$ is $\field[E_W]$-free, and hence $\Wvring$-free, the K\"{u}nneth isomorphism gives \[
E^1\cong \left(\fullyreducedsub{W}{\graf_v;\field}\otimes_{\Wvring} H_*(\Wvcomplex),\, \partial\otimes1\right),
\] completing the proof.
\end{proof}

\begin{figure}[ht]
\[\begin{tikzcd}
E^0_{0,1}\dar{d^0}&\cdots\lar[dashed]{d^1} & E^0_{p-1,1}\lar[dashed]{d^1}\dar{d^0} & E^0_{p,1}\dar{d^0}\lar[dashed]{d^1} 
& \cdots\lar[dashed]{d^1}
&E^0_{|V|,1}\lar[dashed]{d^1}\dar{d^0}
\\
E^0_{0,0}&\lar[dashed]{d^1}\cdots & E^0_{p-1,0}\lar[dashed]{d^1} & E^0_{p,0}\lar[dashed]{d^1} 
& \cdots\lar[dashed]{d^1} 
&E^0_{|V|,0}\lar[dashed]{d^1}
\end{tikzcd}\]
\caption{The potentially nonzero entries of the $E^0$ page of the spectral sequence for the bicomplex $(\fullyreducedsub{W}{\graf;\field}, \partial_v,\partial_-)$}
\label{figure: two row spectral sequence}
\end{figure}

The key to the desired growth estimate is the following technical lemma.

\begin{lemma}\label{lem:technical growth}
Let $(M,\partial_M)$ be a differential bigraded $R_{W,v}$-module and $N$ a bigraded $R_{W,v}$-module with $N$ finitely generated and concentrated in strictly positive homological degrees. If $\dim_\field H_j(M)$ is eventually polynomial in weight of degree at most $d$ for every $0\leq j<i$, then $\dim_\field H_i(M\otimes_{R_{W,v}} N,\partial_M\otimes 1)$ is as well.
\end{lemma}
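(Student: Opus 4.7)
The plan is to combine a spectral sequence argument, yielding a polynomial upper bound of the correct degree, with a finite generation argument over a larger polynomial ring, which upgrades the upper bound to eventual polynomial equality.

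First, I would take a finite-length free resolution $P_\bullet \to N$ by finitely generated $\Wvring$-modules, available by Hilbert's syzygy theorem since $\Wvring = \field[E_W(v)]$ is a polynomial ring over $\field$ and $N$ is finitely generated. In the setting where the lemma is applied, $M = \fullyreducedsub{W}{\graf_v;\field}$ is degreewise free over $\field[E_W]$ and hence flat over the subring $\Wvring$. Under this flatness the total complex of the bicomplex $M \otimes_{\Wvring} P_\bullet$ computes $H_*(M \otimes_{\Wvring} N, \partial_M \otimes 1)$, and the column-filtration spectral sequence reads
\[
E^2_{p,q} \cong \Tor_p^{\Wvring}\bigl(H_*(M), N\bigr)_q \Longrightarrow H_{p+q}(M \otimes_{\Wvring} N,\, \partial_M \otimes 1),
\]
where the subscript $q$ denotes the piece of total internal degree $q$. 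Any contribution to total degree $i$ arises from a term $\Tor_p^{\Wvring}(H_a(M), N_j)$ with $a+j+p=i$ and $j \geq 1$, which forces $a < i$. Such a term is a subquotient of $H_a(M) \otimes_{\Wvring} P_p$, itself a finite direct sum of weight shifts of $H_a(M)$, whose weight-$k$ $\field$-dimension is eventually polynomial of degree at most $d$ by the hypothesis at $a < i$. Summing the finitely many contributions yields a polynomial upper bound of degree at most $d$ on $\dim_\field H_i(M \otimes_{\Wvring} N)_k$.

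Second, I would promote this upper bound to eventual polynomial equality by invoking finite generation over the larger Noetherian ring $S = \field[E_W] \supseteq \Wvring$. In the application, $M$ is finitely generated over $S$ and $N$ is finitely generated over $\Wvring$, so tensoring finite generating sets yields a finite $S$-generating set of $M \otimes_{\Wvring} N$. Because the differential $\partial_M \otimes 1$ is $S$-linear, the homology $H_i(M \otimes_{\Wvring} N)$ is a subquotient of a finitely generated $S$-module, and is therefore itself finitely generated over $S$. Hilbert's theorem then ensures that its weight Hilbert function is eventually polynomial, and the polynomial upper bound from the previous step forces the degree of this polynomial to be at most $d$.

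The main obstacle is that the argument implicitly relies on structural properties of $M$ not recorded in the lemma's hypotheses: degreewise flatness of $M$ over $\Wvring$ (for the spectral sequence) and finite generation of $M$ over the enlarged polynomial ring $\field[E_W]$ (for the finite generation of the homology). Both properties hold for the specific $M$ to which the lemma is applied in the proof of Proposition~\ref{proposition: degree count with sinks}, so the argument goes through in practice; a fully self-contained proof of the lemma as stated would either require strengthening its hypotheses or verifying these structural points separately in each application.
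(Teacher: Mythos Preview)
Your argument is essentially the paper's: both take a finite free resolution of $N$ over the polynomial ring $\Wvring$, run the K\"unneth/hypercohomology spectral sequence with $E^2\cong \Tor^{\Wvring}(H_*(M),N)$, and observe that the $i$th antidiagonal of $H_*(M)\otimes_{\Wvring}P$ is a finite direct sum of weight shifts of $H_j(M)$ with $j<i$, hence eventually polynomial of degree at most $d$.

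Where you differ is in being more explicit about two hypotheses the paper leaves implicit. First, degreewise flatness of $M$ over $\Wvring$ is indeed needed to identify the abutment with $H_*(M\otimes_{\Wvring}N,\partial_M\otimes 1)$ rather than with derived tensor homology; the paper does not record this in the lemma but verifies it in the application (``$M$ is degreewise free and thus degreewise flat''). Second, the paper's proof literally only bounds $\dim_\field H_i(M\otimes N)$ above by an eventually polynomial function of degree at most $d$; it does not argue that the dimension is itself eventually polynomial. Your finite-generation argument over $\field[E_W]$ fills this gap, and again the paper tacitly relies on this structure in the surrounding proposition rather than in the lemma itself. Your closing caveat that the lemma as stated is not self-contained is accurate and matches how the paper actually uses it.
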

We assume this result for the moment.

\begin{proof}[Proof of Proposition \ref{proposition: degree count with sinks}]
Let $\graf$ be smooth with no isolated vertices. 
We proceed by induction on $|\essential\setminus W|$. 

For the base case $W=\essential$, the complex $\fullyreducedsub{W}{\graf;\field}$ is isomorphic to $\field[E_W]$ concentrated in degree 0. 
In weight $k$, the dimension of this vector space is the number of ways of putting $k$ indistinguishable balls in $E_W$ distinct bins, which is polynomial of degree $E_W-1=\Ramos{\graf}{|W|}-1$, as desired.

For the inductive step, suppose the statement has been shown true for $|\essential\setminus W|<r$ and let $|\essential\setminus W|=r>0$. 
Fixing a vertex $v$ in $\essential\setminus W$, it will suffice to show that the $E^2_{i,0}$ and $E^2_{i-1,1}$ entries of the spectral sequence of Lemma \ref{lem:ss of bicomplex} each have eventual polynomial growth of degree at most $\Delta_{\graf}^i-1$ (since $\Wvcomplex$ is concentrated in degrees 0 and 1, these are the only nonzero entries in the appropriate degree on the $E^2$ page).

We note first that $H_0(\Wvcomplex)$ is one-dimensional in each weight, since there is a degree 1 chain interpolating between any two edges of $\stargraph{v}$. Thus, \begin{align*}E^2_{i,0}&\cong H_i(\fullyreducedsub{W}{\graf_v;\field}\otimes_{R_{W,v}}\field)\\
&\cong H_i(\fullyreduced{\graf_{W\sqcup\{v\}};\field}\otimes_{\field[E]}\field[E_W]\otimes_{R_{W,v}}\field)\\
&\cong H_i(\fullyreducedsub{W\sqcup \{v\}}{\graf;\field}),
\end{align*}
and the latter has polynomial growth of degree at most $\Delta_\graf^i-1$ by induction (we have used the isomorphism $\field[E_{W\sqcup\{v\}}]\cong \field[E_W]\otimes_{R_{W,v}}\field$).

Next, we have $E^2_{i-1,1}\cong H_{i}(M\otimes_{R_{W,v}} N)$, where $M=\fullyreducedsub{W}{\graf_v;\field}$ and $N=H_1(\Wvcomplex)$. 
As previously noted, $\Wvcomplex$ is finitely generated over the Noetherian ring $R_{W,v}$, so $N$ is finitely generated, as well as concentrated in strictly positive degrees by definition. 
Moreover, $M$ is degreewise free and thus degreewise flat.
Since $\graf$ was smooth and contained no isolated vertices, the same is true of $\graf_v$.
Then the inductive hypothesis guarantees that, for $j<i$, the homology group $H_j(M)$ is eventually polynomial in the weight of degree at most \[\Delta_{\graf_v}^j-1\le \Delta_{\graf}^{j+1}-1\le \Delta_{\graf}^i-1,\]
so Lemma \ref{lem:technical growth} implies that $\dim_\field E^2_{i-1,1}$ has the same property.
\end{proof}

We conclude with the proof of the technical lemma.

\begin{proof}[Proof of Lemma \ref{lem:technical growth}]
Up to associated graded, the homology group $H_i(M\otimes_{R_{W,v}}N)$ is the sum of the $i$th anti-diagonal on the $E^\infty$ page of a first-quadrant (hence convergent) K\"unneth spectral sequence.
Thus, it suffices to bound the growth of the $i$th antidiagonal of \[E^2\cong \textstyle\Tor_{\Wvring}(H_*(M),N).\] 
To calculate these Tor groups, we resolve $N$. 
Since $\Wvring$ is Noetherian of finite global dimension, and since $N$ is finitely generated, there is a finite length resolution $P$ of $N$ by finitely generated projective $\Wvring$-modules. By the Quillen--Suslin theorem, $P$ is in fact a free resolution, generated by a trigraded $\field$-vector space $G=\bigoplus G_{a,b}\{k\}$ in which every summand is finite dimensional and almost all vanish.
The trigraded complex $H_*(M)\otimes_{\Wvring} P$ computes the desired (weight-graded) $\Tor$ groups. Since $N$ is concentrated in strictly positive homological degrees, $P$ is as well, so the $i$th antidiagonal of this complex is isomorphic to \[\bigoplus_{j=0}^{i-1}\bigoplus_{a+b=i-j}\bigoplus_kH_j(M)\otimes_\field G_{a,b}\{k\},\] which is a finite sum of vector spaces having eventual polynomial growth of degree at most $d$.
\end{proof}

\section{Edge formality}\label{section:formality}

We now undertake the in-depth study of one aspect of the chain level $R[E]$-module structure induced by edge stabilization, namely the question of its formality. 

\subsection{Edge formality} Recall that a differential bigraded $A$-module is said to be \emph{formal} if it is connected to its homology by a finite zig-zag of (bigraded) quasi-isomorphisms:
\[
(M,d)\xleftarrow{\simeq} (M_1,d_1)\xrightarrow{\simeq} (M_2,d_2)\xleftarrow{\simeq}\cdots \xrightarrow{\simeq} (H(M,d),0). 
 \] 

\begin{definition}
We say that a graph $\graf$ is \emph{edge formal} over the commutative ring $R$ if the the singular chain complex of $B(\graf)$ with coefficients in $R$ is formal as a differential bigraded $R[E]$-module.
\end{definition}

The goal of this section is to prove Theorem \ref{thm:formality}, which gives a complete characterization of edge formal graphs independent of coefficient ring. 
We first recall the statement of this theorem---see Definition \ref{def:small graph} for a reminder on terminology.

\begin{formality}\label{thm:formality2}
A graph is edge formal if and only if it is small.
\end{formality}

In proving this theorem, we may replace the large and unwieldy complex of singular chains with the smaller complex $S(\graf;R)$; indeed, as we will see below in Corollary \ref{cor:chain level comparison}, this $R[E]$-module is quasi-isomorphic to the $R[E]$-module of singular chains with coefficients in $R$. Moreover, we may work interchangeably with the complexes $S(\graf;R)$ and $\fullyreduced{\graf;R}$. Since the underlying bigraded $R[E]$-module of $S(\graf;R)$ (resp. $\fullyreduced{\graf;R}$) is free, we may use the following criterion, whose proof is a standard argument in homotopical algebra.

\begin{lemma}\label{lemma:formality}
Let $A$ be a weight-graded commutative ring and $M$ a differential bigraded $A$-module which is projective in each homological degree. Then $M$ is formal if and only if there is a map of differential bigraded $A$-modules $M\to H_*(M)$ inducing the identity on homology. 
\end{lemma}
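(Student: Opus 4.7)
The ``if'' direction is immediate: an $A$-linear bigraded chain map $M \to H_*(M)$ inducing the identity on homology is itself a quasi-isomorphism, and the one-step zigzag $M \to H_*(M)$ already witnesses formality.

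For the ``only if'' direction, the plan is to invert the zigzag up to chain homotopy. The key technical input is the following \emph{K-projectivity} statement: if $P$ is a non-negatively graded differential bigraded $A$-module which is projective in each homological degree, then for any quasi-isomorphism $f \colon X \to Y$ of differential bigraded $A$-modules, the induced map $[P, X] \to [P, Y]$ on chain-homotopy classes of $A$-linear bigraded chain maps is a bijection. The proof is the classical obstruction-theoretic one: it suffices to show that any chain map from $P$ to the (acyclic) mapping cone of $f$ is null-homotopic, which is built by an induction on homological degree in which at each stage one must lift a map $P_i \to B_i(\mathrm{Cone}(f))$ across the surjection $\mathrm{Cone}(f)_{i+1} \twoheadrightarrow B_i(\mathrm{Cone}(f))$; such a lift exists because $P_i$ is projective. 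The weight grading is inert throughout: all differentials and maps preserve weight, so the argument runs unchanged on the category of weight-graded $A$-modules.

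Granted this, suppose $M$ is formal with zigzag
\[
M = M_0 \xleftarrow{\phi_1} M_1 \xrightarrow{\psi_1} M_2 \xleftarrow{\phi_2} \cdots \xrightarrow{\psi_k} M_{2k} = H_*(M).
\]
Starting from $\tau_0 \coloneqq \mathrm{id}_M$, I construct quasi-isomorphisms $\tau_i \colon M \to M_i$ inductively. Given $\tau_{2j}$, apply K-projectivity of $M$ to the quasi-isomorphism $\phi_{j+1}$ to produce $\tau_{2j+1} \colon M \to M_{2j+1}$ with $\phi_{j+1} \tau_{2j+1} \simeq \tau_{2j}$, and then set $\tau_{2j+2} \coloneqq \psi_{j+1} \tau_{2j+1}$. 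The terminal map $\tau \coloneqq \tau_{2k} \colon M \to H_*(M)$ is a quasi-isomorphism, and hence induces an automorphism $\alpha$ of $H_*(M)$; since $H_*(M)$ carries the zero differential, $\alpha^{-1}$ is an $A$-linear bigraded chain map, and $\alpha^{-1} \circ \tau$ is the desired map to $H_*(M)$ inducing the identity on homology.

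The only genuine obstacle is the K-projectivity statement, but this is a routine adaptation of the standard ``bounded-below plus degreewise projective implies K-projective'' result for chain complexes of ordinary modules; the bigrading and weight-graded ring present no new difficulty.
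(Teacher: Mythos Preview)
Your proof is correct; it is precisely the standard homotopical-algebra argument the paper alludes to but does not spell out (the paper writes only ``whose proof is a standard argument in homotopical algebra''). The key input---that a non-negatively graded, degreewise projective complex is K-projective, so that the wrong-way quasi-isomorphisms in the zigzag can be inverted up to chain homotopy---is exactly the expected one.
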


Thus, Theorem \ref{thm:formality} amounts to characterizing the (non-)existence of such maps.

\begin{remark}
We caution the reader that the notion of an edge formal graph has no direct connection with that of a formal topological space in the sense of rational homotopy theory. 
For partial results about this other kind of formality, see~\cite[Theorems~1.2 and~1.3]{KoLaPark:GBGM}, where a necessary and sufficient criterion is given for $B_4(\graf)$ to be formal as a space.
\end{remark}

\subsection{Small graphs are formal}

We begin with a classification of smooth small graphs---see Figure~\ref{figure: small graphs}.

\begin{lemma}\label{lem:small classification}
If $\graf$ is a small graph that is both connected and smooth, then $\graf$ is isomorphic to an isolated vertex, an interval $\intervalgraph$, a cycle $\cyclegraph{}$, a lollipop $\lollipopgraph{}$, a figure-eight $\figureeightgraph$, or a handcuff $\handcuffgraph$.
\end{lemma}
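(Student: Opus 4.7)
The plan is to reduce the classification to a local analysis at each vertex, since both smallness and smoothness are local conditions. Because $\graf$ is smooth, it is its own maximal smoothing, so the smallness condition becomes the direct requirement that $|E(v)|\leq 2$ for every vertex $v$. Smoothness, per the discussion preceding the lemma, rules out every bivalent vertex whose two half-edges belong to distinct edges; the only bivalent vertices permitted are the unique vertices of self-loops.

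With these constraints, I would enumerate the possible local types of a vertex $v$, recalling that each non-loop edge incident at $v$ contributes $1$ to $d(v)$ while each self-loop at $v$ contributes $2$:
\begin{itemize}
\item $|E(v)|=0$: an isolated vertex, $d(v)=0$;
\item $|E(v)|=1$ and that edge is a non-loop: $d(v)=1$, a tail endpoint;
\item $|E(v)|=1$ and that edge is a self-loop: $d(v)=2$, permitted by smoothness since the two half-edges lie on the same edge;
\item $|E(v)|=2$ with one self-loop and one non-loop: $d(v)=3$;
\item $|E(v)|=2$ with two self-loops: $d(v)=4$.
\end{itemize}
The remaining case, $|E(v)|=2$ with both edges non-loops, would yield a bivalent vertex with two distinct incident edges, violating smoothness and therefore being excluded.

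The key global observation is that in each surviving vertex type, \emph{at most one non-loop edge} is incident to $v$. Hence the set of non-loop edges of $\graf$ is a matching in the graph-theoretic sense: its components, as a subgraph, are isolated vertices or single edges. Since attaching self-loops does not merge components, connectedness of $\graf$ forces $\graf$ to have at most one non-loop edge.

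Finally, I would read off the six homeomorphism types: if there are no non-loop edges, then $\graf$ is a single vertex of type (a), (c), or (e), giving an isolated vertex, $\cyclegraph{}$, or $\figureeightgraph$; if there is exactly one non-loop edge, its two endpoints are each of type (b) or (d), and the three combinations (b)+(b), (b)+(d), (d)+(d) give $\intervalgraph$, $\lollipopgraph{}$, and $\handcuffgraph$ respectively. The one place where care is warranted is in the translation of the definition of smallness, which involves a maximal smoothing of $\graf$, into the statement $|E(v)|\leq 2$ for $\graf$ itself; this is legitimate only because $\graf$ is already smooth, so the maximal smoothing is the identity.
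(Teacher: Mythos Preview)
Your proof is correct and follows essentially the same approach as the paper: enumerate the admissible local vertex types under smallness plus smoothness, then use connectedness to pin down the global structure. Your matching observation (each vertex meets at most one non-loop edge, so connectedness forces at most one non-loop edge) is a slightly cleaner packaging of the paper's case split, but the content is the same.
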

\begin{proof}
Any vertex of a small graph must be either isolated, $1$-valent, $2$-valent (with either two distinct edges or a self-loop), $3$-valent, with one self-loop, or $4$-valent, with $2$ self-loops.
A connected graph with a vertex which is isolated, $2$-valent with a self-loop, or $4$-valent with $2$ self-loops is necessarily an isolated vertex, the cycle graph $\cyclegraph{}$, or the figure-eight graph $\figureeightgraph$, respectively.
A graph containing a $2$-valent vertex with two distinct edges cannot be smooth.
Then in any remaining case, every vertex must be $1$-valent or $3$-valent with one self-loop.
A connected graph all of whose vertices are of these two kinds must have precisely two vertices; then there are three cases, namely the interval $\intervalgraph$, the lollipop graph $\lollipopgraph{}$, and the handcuff graph $\handcuffgraph$.
\end{proof}

This gives us most of what we need, since smoothings reflect formality.

\begin{lemma}\label{lem:smoothing formality}
Let $f:\graf\to \graf'$ be a smoothing.
If $\graf'$ is edge formal over $R$, then so is $\graf$.
\end{lemma}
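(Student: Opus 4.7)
The plan is to exhibit a zigzag of $R[E]$-linear quasi-isomorphisms from $S(\graf;R)$ to its homology by pulling back such a zigzag on the $\graf'$ side along a ring homomorphism and splicing in a quasi-isomorphism induced by $f$. By functoriality of the \'{S}wi\k{a}tkowski assignment $S(-;R)\colon \Gph \to \Mod$, the smoothing $f$ yields a morphism in $\Mod$ between $(R[E], S(\graf;R))$ and $(R[E'], S(\graf';R))$ whose ring-level component is the weight-graded homomorphism $\phi\colon R[E] \to R[E']$ sending each edge of $\graf$ to the (unique) edge of $\graf'$ containing its image, and whose module-level component is an $R[E]$-linear chain map $f_*\colon S(\graf;R) \to \phi^{*}S(\graf';R)$.

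First I would verify that $f_*$ is a quasi-isomorphism. Because a smoothing is by definition a homeomorphism of underlying spaces, the induced map $B(\graf) \to B(\graf')$ is a homeomorphism and hence induces an isomorphism on $R$-homology. The naturality of the comparison in Theorem~\ref{thm:module iso} then forces $f_*$ to be a quasi-isomorphism of $R[E]$-modules and simultaneously identifies $\phi^{*}H_*(S(\graf';R))$ with $H_*(S(\graf;R))$.

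Now suppose $\graf'$ is edge formal. Applying Corollary~\ref{cor:chain level comparison} translates this hypothesis into a zigzag of $R[E']$-linear quasi-isomorphisms connecting $S(\graf';R)$ to $H_*(S(\graf';R))$. The pullback-of-scalars functor along $\phi$ is the identity on underlying chain complexes, so it preserves quasi-isomorphisms; applying it termwise produces a zigzag of $R[E]$-linear quasi-isomorphisms connecting $\phi^{*}S(\graf';R)$ and $\phi^{*}H_*(S(\graf';R)) \cong H_*(S(\graf;R))$. Prepending $f_*$ and invoking Corollary~\ref{cor:chain level comparison} once more to pass back to singular chains yields the required zigzag witnessing edge formality of $\graf$.

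There is no real obstacle in this argument: the substantive work is already bundled into the naturality of Theorem~\ref{thm:module iso} and the formal observation that pullback of scalars preserves quasi-isomorphisms. The only point demanding care is the bookkeeping in the category $\Mod$ so that the ring maps, module structures, and pullbacks are tracked coherently throughout the zigzag; in particular, one should avoid the temptation to forget $\phi$ and pretend that $S(\graf;R)$ and $S(\graf';R)$ are modules over the same ring.
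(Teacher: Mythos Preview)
Your argument is correct and essentially the same as the paper's: both restrict the $R[E(\graf')]$-linear formality data for $S(\graf';R)$ along the ring map induced by $f$ and splice in the quasi-isomorphism $S(f;R)$. The paper is slightly more economical, writing the single zigzag
\[
S(\graf;R)\xrightarrow{S(f;R)}S(\graf';R)\dashrightarrow H_*(S(\graf';R))\xleftarrow{H_*(S(f;R))}H_*(S(\graf;R))
\]
and noting that all three arrows are $R[E(\graf)]$-linear quasi-isomorphisms; in particular it uses the projectivity criterion (Lemma~\ref{lemma:formality}) to collapse the formality zigzag for $\graf'$ to a single arrow, whereas you carry a general zigzag throughout, but this is a cosmetic difference.
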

\begin{proof}
The hypothesis guarantees the existence of the middle arrow in the diagram
\[S(\graf;R)\xrightarrow{S(f;R)}S(\graf';R)\dashrightarrow H_*(S(\graf';R))\xleftarrow{H_*(S(f;R))}H_*(S(\graf;R))\]
of quasi-isomorphisms of $R[E(\graf)]$-modules, where $R[E(\graf)]$ acts on the middle two entries by restriction.
\end{proof}

We can also eliminate self-loops by making choices.

\begin{lemma}
\label{lemma: reduce loops}
Let $\graf$ be a graph with a self-loop $e$ at the vertex $v$. Let $\graf_{\!-}$ be the graph obtained by replacing $e$ with an edge which is not a loop to a new vertex. Then the reduced \'{S}wi\k{a}tkowski complex $\fullyreduced{\graf;R}$ (unnaturally) decomposes as a direct sum of differential graded $R[E]$-modules as
\[
\fullyreduced{\graf_v;R}[1]\{1\}\oplus \fullyreduced{\graf_{\!-};R}\cong \fullyreduced{\graf;R}. 
\] In particular, $\graf$ is edge formal over $R$ if and only if both $\graf_v$ and $\graf_{\!-}$ are so.
\end{lemma}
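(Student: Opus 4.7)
The plan is to exhibit the splitting of $\fullyreduced{\graf;R}$ by a basis computation at $v$, then derive the formality statement formally from it.

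Enumerate the half-edges at $v$ with $h_1$ and $h_2$ the two ends of the self-loop $e$, and $h_3,\ldots, h_n$ the remaining half-edges. Choosing the basis $\varnothing, h_{12}, h_{13}, \ldots, h_{1n}$ of $\reducedlocalstates{v}$ yields the bigraded $R$-module splitting
\[
\reducedlocalstates{v} = \langle h_{12}\rangle \oplus \langle \varnothing, h_{13}, \ldots, h_{1n}\rangle,
\]
and tensoring with $R[E]$ and with the reduced local states at the other vertices gives a corresponding $R[E]$-linear splitting of the underlying module of $\fullyreduced{\graf;R}$. The decomposition is unnatural because the pivot $h_1$ was chosen.

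Next I would check that this respects the differential. The only piece that could mix the two summands is $\partial h_{12}$, which equals $e(h_1)-e(h_2)=e-e=0$ since both half-edges are ends of the self-loop $e$; on the second summand the $v$-part of the differential is the usual $\partial h_{1j}=e-e(h_j)$ for $j\ge 3$. In $\graf_v$ the $n$ new vertices are one-valent and contribute only copies of $\langle \varnothing\rangle$, so $\fullyreduced{\graf_v;R}$ is the tensor product of $R[E]$ with the reduced local states of $\graf$ at the vertices other than $v$; the $h_{12}$-summand is precisely this, with an extra tensor factor of bidegree $(1,1)$ and vanishing differential, whence the shift $[1]\{1\}$. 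In $\graf_{\!-}$ the vertex $v$ retains the half-edges $h_1, h_3, \ldots, h_n$ with reduced local states $\langle \varnothing, h_{13}, \ldots, h_{1n}\rangle$, the new one-valent vertex $v'$ contributes $\langle \varnothing\rangle$, and the differentials $\partial h_{1j}=e-e(h_j)$ coincide with those above, identifying the second summand with $\fullyreduced{\graf_{\!-};R}$. All three graphs share the same edge set, so the $R[E]$-action is preserved throughout.

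For the ``in particular'' statement, edge formality is equivalent to formality of $\fullyreduced{-;R}$ as a differential bigraded $R[E]$-module, by the chain-level comparison of Corollary~\ref{cor:chain level comparison} together with invariance of formality under quasi-isomorphism. The direction ``both summands formal implies $\graf$ edge formal'' is immediate by assembling zig-zags summand-wise. For the converse, I would apply Lemma~\ref{lemma:formality}: given an $R[E]$-linear map $\fullyreduced{\graf;R}\to H_*(\fullyreduced{\graf;R})$ inducing the identity in homology, post-composing the inclusion of a summand with projection onto its homology gives an $R[E]$-linear map on that summand inducing the identity on its homology, and both summands are degreewise free (hence projective) over $R[E]$. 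There is no substantive obstacle; the argument is essentially a choice of basis, and the only real concern is careful bookkeeping of the grading shifts $[1]\{1\}$ and the correspondence between local state bases in the three graphs.
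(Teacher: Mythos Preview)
Your proof is correct and follows essentially the same approach as the paper: choose the basis $\{\varnothing, h_{12}, h_{13},\ldots,h_{1n}\}$ at $v$, split off the $h_{12}$-summand using $\partial h_{12}=e-e=0$, and identify the two summands with $\fullyreduced{\graf_v;R}[1]\{1\}$ and $\fullyreduced{\graf_{\!-};R}$ respectively. The paper packages the second identification via the induced map of the graph morphism $\iota:\graf_{\!-}\to\graf$ sending the surviving half-edge of $e$ to $h_1$, whereas you do it by direct basis matching; the content is the same, and your treatment of the ``in particular'' clause via Lemma~\ref{lemma:formality} is more explicit than what the paper writes.
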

\begin{proof}
Let $h_1$ and $h_2$ be the two half-edges of $e$. 
Write all half-edge generators at $v$ as differences $h_{1i}=h_1-h_i$
We obtain a direct sum decomposition of $\fullyreduced{\graf;R}$ by grouping generators (using this choice of presentation at $v$) according to whether or not they involve $h_{12}$. Since $h_{12}$ is closed, this decomposition respects the differential, and it respects the $R[E]$-action by inspection.

Now, there is a graph morphism $\iota$ from $\graf_{\!-}$ to $\graf$ taking the half-edge of $e$ to $h_1$, and we obtain a map $(\alpha,\beta)\mapsto h_{12}\alpha + \iota_*(\beta)$. It is an immediate verification that this map realizes the direct sum decomposition.
\end{proof}
Removing a vertex or turning a self-loop to an edge as in this lemma preserves smallness.

\begin{proof}[Proof of Theorem \ref{thm:formality}, ``if'' direction]
By Lemmas~\ref{lem:smoothing formality} and~\ref{lemma: reduce loops}, we may assume that $\graf$ is a smooth graph without self-loops. 
By Lemma~\ref{lem:small classification}, $\graf$ is a disjoint union of isolated vertices and intervals.
But then $\fullyreduced{\graf;R}$ has no differential, and there is nothing to prove.
\end{proof}

\subsection{Paradoxically decomposable cycles} 
We will derive the ``only if'' direction of Theorem~\ref{thm:formality} from the existence of certain cycles that we dub \emph{paradoxically decomposable}.

\begin{definition}
Let $A$ be a commutative ring, $I\subseteq A$ an ideal, $M$ a differential graded $A$-module, and $b\in M$ a cycle.
\begin{enumerate}
\item We say that $b$ is \emph{decomposable} with respect to $I$ if $b\in IM$. 
\item We say that $b$ is \emph{paradoxically decomposable} with respect to $I$ if 
\begin{enumerate}
\item $b$ is decomposable with respect to $I$ in $M$, and 
\item $[b]$ is \emph{not} decomposable with respect to $I$ in $H_*(M)$.
\end{enumerate}
\end{enumerate}
\end{definition}

\begin{lemma}\label{lemma: paradoxical}
Let $M$ be a differential graded $A$-module that is projective in each degree, $I\subseteq A$ an ideal, and $b\in M$ a cycle.
If $b$ is paradoxically decomposable with respect to $I$, then $M$ is not formal.
\end{lemma}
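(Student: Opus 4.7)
The plan is to apply Lemma \ref{lemma:formality} directly, reducing formality to the existence of a single strict $A$-linear chain map $\phi \colon M \to H_*(M)$ (with $H_*(M)$ regarded as a DG $A$-module with zero differential) inducing the identity on homology. The contradiction then drops out of the $A$-linearity of $\phi$ plus the hypothesis that $b \in IM$ while $[b] \notin I \cdot H_*(M)$.

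More concretely, first I would assume for contradiction that $M$ is formal. Since $M$ is projective in each homological degree, Lemma \ref{lemma:formality} furnishes a map $\phi$ as above. Next, writing $b = \sum_i a_i m_i$ with $a_i \in I$, the $A$-linearity of $\phi$ gives
\[
\phi(b) \;=\; \sum_i a_i \,\phi(m_i) \;\in\; I \cdot H_*(M).
\]
Finally, because the differential on $H_*(M)$ is zero, every element is a cycle and each homology class is represented uniquely by itself; since $\phi$ induces the identity on homology, we have $\phi(b) = [b]$ as elements of $H_*(M)$. Combining the two displays yields $[b] \in I \cdot H_*(M)$, which directly contradicts clause (b) of paradoxical decomposability.

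There is no real obstacle here beyond invoking Lemma \ref{lemma:formality} correctly: the force of that lemma is precisely that projectivity lets us replace an \emph{a priori} zig-zag of quasi-isomorphisms by a single honest $A$-linear chain map, and without it the argument would fail, since a zig-zag does not in general preserve the submodule $IM$ on the nose. Everything else is formal manipulation of $A$-linearity and the vanishing of the differential on $H_*(M)$.
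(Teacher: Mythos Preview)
Your proof is correct and follows essentially the same argument as the paper: assume formality, invoke Lemma~\ref{lemma:formality} to obtain an $A$-linear map $M\to H_*(M)$ inducing the identity on homology, and then observe that $A$-linearity forces $[b]=\phi(b)\in I\cdot H_*(M)$, contradicting paradoxical decomposability. Your additional remark on why the single map from Lemma~\ref{lemma:formality} is essential (rather than a zig-zag) is a nice clarification not present in the paper's terse version.
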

\begin{proof}
Assume that $M$ is formal.
Then, by our hypothesis on $M$ and Lemma~\ref{lemma:formality}, there is a map $f:M\to H_*(M)$ of $A$-modules inducing the identity on homology.
Since $b$ is decomposable with respect to $I$, we may write $b=\sum r_i b_i$ with $r_i\in I$, whence $f(b)=\sum r_i f(b_i)$; thus, $f(b)$ is decomposable with respect to $I$.
But $b$ is a cycle and $f$ induces the identity on homology, so $f(b)=[b]$, a contradiction.
\end{proof}

We briefly detour from our main discussion to present the following result, which is of independent interest, particularly from the point of view of \cite[Theorem 3.20]{AnDrummond-ColeKnudsen:SSGBG}.

\begin{proposition}\label{prop:tails formality}
Let $\graf$ be a graph with tails $e_1$, $e_2$, and $e_3$ in the same connected component, and let $R$ be $\mathbb{Z}$ or $\mathbb{Z}/2\mathbb{Z}$.
Then $S(\graf;R)$ is not formal over $R[e_1,e_2,e_3]$.
\end{proposition}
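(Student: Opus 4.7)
The plan is to apply Lemma~\ref{lemma: paradoxical} by exhibiting a cycle $b \in S(\graf;R)$ that lies in $I \cdot S(\graf;R)$, where $I = (e_1, e_2, e_3)$, yet whose homology class does not lie in $I \cdot H_*(S(\graf;R))$.

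Because the three tails all lie in a common connected component, I can select a tree $T \subseteq \graf$ containing $e_1, e_2, e_3$ whose only 1-valent vertices are the tail leaves $v_1, v_2, v_3$; one obtains such a $T$ by taking a minimal Steiner tree joining the non-leaf endpoints $w_1, w_2, w_3$ in the connected component and adjoining the three tails. Then $T$ is a subdivision of $\stargraph{3}$, so $H_1(B_2(T);R) \cong R$ is generated by a star class $\alpha_T$. To produce a chain-level representative of $\alpha_T$ inside the ideal, I begin with the standard representative $\widetilde e_1(\widetilde h_2 - \widetilde h_3) + \widetilde e_2(\widetilde h_3 - \widetilde h_1) + \widetilde e_3(\widetilde h_1 - \widetilde h_2)$ at the essential vertex $v^*$ of $T$, where $\widetilde e_i$ is the edge at $v^*$ on the arm toward $e_i$. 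Iteratively applying the identity $\partial(h - h') = e(h) - e(h')$ at the bivalent intermediate vertices of each arm lets me replace each $\widetilde e_i$ factor by $e_i$ modulo a boundary, producing a cycle $b_T \in (e_1,e_2,e_3)\cdot S(T;R)$ representing $\alpha_T$. Pushing forward gives the candidate $b = \iota_* b_T \in I \cdot S(\graf;R)$ via the inclusion $\iota \colon T \hookrightarrow \graf$.

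The main obstacle is showing $[b] \notin I \cdot H_*(S(\graf;R))$, i.e.\ that $\iota_* \alpha_T$ cannot be written as $\sum_{j=1}^3 e_j \beta_j$ with $\beta_j \in H_1(\graf; R)$. My strategy is a coefficient-tracking argument inside the reduced \'Swi\k{a}tkowski complex $\fullyreduced{\graf;R}$. The cycle $b$, when reduced, has a distinguished monomial structure involving half-edge differences of \emph{tail} half-edges at essential vertices along $T$. Any decomposable representative $\sum e_j \widehat\beta_j$ requires each $\widehat\beta_j$ to be a weight-1 degree-1 cycle, hence a cycle of $\graf$ of the form $\sum n_h h$ with $\sum n_h(e(h)-v(h)) = 0$; such cycles cannot reproduce the tail-half-edge-difference shape of $b$ without producing additional monomials requiring a compensating boundary, which in turn (by the 1-valence of $v_i$) cannot absorb the leaf-level contributions of $b$. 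The coefficient hypothesis $R\in\{\mathbb{Z},\mathbb{Z}/2\mathbb{Z}\}$ is used here to rule out divisibility phenomena in the $\mathbb{Z}$ case and sign cancellations in the $\mathbb{Z}/2$ case.

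The hardest part will be making this coefficient-tracking rigorous: the inclusion $S(T;R) \hookrightarrow S(\graf;R)$ realizes $S(T;R)$ only as a graded direct summand, not as a chain-level summand, so one cannot simply project the hypothetical decomposition onto $S(T;R)$. The enabling structural fact is that each tail leaf $v_i$ carries exactly one half-edge, belonging to $e_i$, so the only way a putative $c$ in $b = \sum e_j \gamma_j + \partial c$ can contribute a $v_i$ term is via that unique half-edge and must therefore enter multiplied by $e_i$; tracking this carefully along with the constraints imposed by the boundaries of the half-edge differences at $v^*$ should force any witness of decomposability back into $S(T;R)$, and there the tree structure $H_1(T;R)=0$ precludes it.
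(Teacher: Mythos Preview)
Your setup is correct and matches the paper: you build the subtree $T$ (the paper calls it $\graf_0$), take the star class, and produce a decomposable chain representative by sliding along the arms. The paper does exactly this, writing the representative as $\sum p_{ij}e_k$ where $\partial p_{ij}=e_i-e_j$.

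The genuine gap is the indecomposability of $[b]$ in homology. You acknowledge that your coefficient-tracking argument is not yet rigorous, and in fact the specific mechanism you propose---forcing a hypothetical decomposition back into $S(T;R)$---is not justified. The inclusion $S(T;R)\hookrightarrow S(\graf;R)$ is not a chain retract, and there is no evident reason why a boundary $\partial c$ in $S(\graf;R)$ witnessing decomposability should be confined to $T$; the graph $\graf$ may have many cycles through the interior vertices of $T$, and the $\beta_j\in H_1(B_1(\graf);R)=H_1(\graf;R)$ can involve all of them. Your remark about tail leaves having a unique half-edge is correct but does not by itself pin down what happens at the interior vertices of $T$, which is where the action is.

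The paper bypasses all of this with a single invariant: the canonical homomorphism
\[
\sigma\colon H_1(B_2(\graf);R)\longrightarrow \mathbb{Z}/2\mathbb{Z}
\]
recording whether a two-strand braid permutes its endpoints. A star class has $\sigma(\alpha)=1$. On the other hand, for any $\gamma\in H_1(B_1(\graf);R)$ and any tail $e_j$, the class $e_j\gamma$ is represented by one particle traversing a closed curve while a second particle sits on the tail $e_j$; since a closed curve cannot enter a tail, the two particles never swap, so $\sigma(e_j\gamma)=0$. This immediately rules out $[b]=\sum e_j\beta_j$. The restriction $R\in\{\mathbb{Z},\mathbb{Z}/2\mathbb{Z}\}$ is exactly what is needed for $\sigma$ to be defined with values in $\mathbb{Z}/2\mathbb{Z}$, which also explains the role of your coefficient hypothesis more cleanly than the divisibility/sign story you sketched.
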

\begin{proof}
We will show that the $S(\graf;R)$ has a parodoxically decomposable cycle with respect to the ideal $I$ generated by $e_1$, $e_2$, and $e_3$. 

Because $e_1$, $e_2$, and $e_3$ lie in the same connected component, there is a subgraph $\graf_0\subseteq \graf$ containing these three edges and admitting a smoothing $\graf_0\to \stargraph{3}$. 
We will show that the corresponding star class
$\alpha\in H_*(B(\graf_0);R)\cong H_*(B(\stargraph{3});R)$ is not decomposable with respect to $I$, which will imply the claim, since a decomposable representative for this class is given by $\sum p_{ij}e_k$, where $p_{ij}$ is the unique element in $S(\graf_0;R)$ with $\partial p_{ij}=e_i-e_j$.

Recall the canonical homomorphism $\sigma: H_1(B_2(\graf);R)\to \mathbb{Z}/2\mathbb{Z}$ that records the permutation of the endpoints of a braid.
Since $\alpha$ is a star class, $\sigma(\alpha)=1$.
On the other hand, $\sigma(e\gamma)=0$ for any $e\in\{e_1, e_2, e_3\}$ and $\gamma\in H_1(B_1(\graf);R)$ because $e$ is a tail and thus is not part of the simple closed curve representing $\gamma$.
Then the assumption that $\alpha$ is decomposable with respect to $I$ implies the contradiction $\sigma(\alpha)=0$. 
\end{proof}
We expect this statement is true with arbitrary coefficients.

Next, we will use paradoxically decomposable cycles to establish non-formality in two basic cases, from which we will deduce the statement for a general large graph.
For the rest of this section, we shall only consider decomposability with respect to the ideal generated by $E$. 

We write $N$ for the number of essential vertices of $\graf$. As is well known, the homology of $B_k(\graf)$ vanishes above degree $N$; indeed, the \'{S}wi\k{a}tkowski complex itself vanishes in these degrees (see \cite[Theorem 3.3]{Ghrist:CSBGGR} for an earlier argument). Moreover,  this top homology is nonzero, since there can be no boundaries in degree $N$, and since the external product of standard star class representatives at the essential vertices of $\graf$ is a nonzero cycle in the \'{S}wi\k{a}tkowski complex. 

\begin{lemma}\label{lemma: trivalent is non-formal}
Let $\graf$ be a graph with no self-loops all of whose vertices are of valence $1$ or $3$.
If $\graf$ has a vertex of valence $3$, then $\graf$ is not edge formal.  
\end{lemma}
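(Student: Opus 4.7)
The plan is to exhibit a paradoxically decomposable cycle in $\widetilde{S}(\graf;R)$ with respect to the ideal $(E) \subset R[E]$ and invoke Lemma~\ref{lemma: paradoxical}; since the inclusion $\widetilde{S}(\graf;R) \hookrightarrow S(\graf;R)$ is an $R[E]$-linear quasi-isomorphism, this suffices to conclude that $\graf$ is not edge formal. The candidate cycle is the external product $\alpha = \bigotimes_{v \in \essential} a_v$ of the standard star class representatives at each essential vertex. Each $a_v$ lies in $(E)\widetilde{S}(\graf;R)$ by construction, so $\alpha$ itself is chain-level decomposable with respect to $(E)$.

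The key intermediate claim is that $H_N(\widetilde{S}(\graf;R)) = R[E]\cdot[\alpha]$ is a free $R[E]$-module of rank~$1$, where $N = |\essential|$ is the top degree of $\widetilde{S}(\graf;R)$. Granted this, $(E)H_N = (E)[\alpha]$, so any equation $[\alpha] = r[\alpha]$ with $r \in (E)$ would force $r = 1$ by freeness, contradicting $1 \notin (E)$. Thus $[\alpha] \notin (E)H_N$ and $\alpha$ is paradoxically decomposable.

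I would prove the freeness claim by induction on $N$, with trivial base case $N = 0$ (in which case $\graf$ is a disjoint union of intervals and $\widetilde{S}(\graf;R) = R[E]$ is concentrated in degree~$0$). For the inductive step, apply the vertex explosion short exact sequence of Proposition~\ref{proposition: les vertex reduced} at an essential vertex $v$ with chosen half-edge $h_0 \in H(v)$. The exploded graph $\graf_v$ remains trivalent without self-loops and has $N-1$ essential vertices, so by induction $H_{N-1}(\widetilde{S}(\graf_v;R)) = R[E]\cdot[\alpha']$ is free of rank~$1$ with $\alpha' = \bigotimes_{w \neq v} a_w$, while $H_N(\widetilde{S}(\graf_v;R)) = 0$. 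The top of the associated long exact sequence therefore reduces to $H_N(\widetilde{S}(\graf;R)) \cong \ker\delta$, where $\delta$ acts on each $h$-summand by multiplication by $e(h) - e(h_0) \in R[E]$. Trivalence and absence of self-loops ensure the three edges $e(h_0), e(h_1), e(h_2)$ at $v$ are distinct, so the pair $(e(h_1) - e(h_0), e(h_2) - e(h_0))$ is a regular sequence of differences of distinct polynomial indeterminates in $R[E]$. Koszul exactness then identifies $\ker\delta$ as a free $R[E]$-module of rank~$1$, and expanding the standard representative $a_v$ in the basis $\{h_1 - h_0, h_2 - h_0\}$ matches the syzygy generator with $\psi_*([\alpha])$ up to sign.

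The main obstacle is the rank-one syzygy identification, which must hold over an arbitrary commutative ring $R$: the crucial input is that two differences of distinct polynomial indeterminates form a regular sequence in $R[E]$ over any commutative ring, so the Koszul syzygy module in question is free of rank~$1$. A secondary, routine point is the matching of the syzygy generator with $\psi_*([\alpha])$, which amounts to expanding the standard star class representative $a_v$ in the chosen basis.
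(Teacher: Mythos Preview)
Your argument is correct and the overall strategy—exhibiting the external product of standard star class representatives as a paradoxically decomposable cycle—is exactly the one the paper uses.

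The difference lies in how indecomposability of the homology class is established. The paper invokes \cite[Proposition~5.25]{AnDrummond-ColeKnudsen:SSGBG} to conclude that $H_N(B_k(\graf);R)$ vanishes for $k<2N$ and is one-dimensional for $k=2N$; indecomposability of $[\alpha]$ is then immediate, since any decomposition would require classes of strictly lower weight. You instead prove the stronger statement that $H_N(\widetilde S(\graf;R))$ is free of rank one over $R[E]$, by induction on $N$ via the vertex explosion long exact sequence and a Koszul syzygy computation. Your approach is self-contained within the tools developed in the present paper (vertex explosion and the explicit connecting map), avoids the external citation, and yields a structural statement about the entire top homology module rather than just its bottom two weight-graded pieces; the paper's route is shorter given the citation. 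Both are valid, and your regular-sequence claim is unproblematic since $e(h_0),e(h_1),e(h_2)$ are distinct indeterminates (absence of self-loops) and can be completed to a polynomial basis of $R[E]$.
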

\begin{proof}
By~\cite[Proposition 5.25]{AnDrummond-ColeKnudsen:SSGBG}, $H_{N}(B_k(\graf);R)$ is $0$ for $k<2N$ and is one dimensional for $k=2N$, spanned by a $W$-torus, where $W$ is the set of trivalent vertices.
This class is not decomposable, since there is nothing in lower weight and degree $N$, but its standard representative is decomposable.
The claim now follows from Lemma \ref{lemma: paradoxical}.
\end{proof}

For the second result, we require the following preliminary notion:

\begin{definition}
We say that a vertex $v$ is {\em simple} if there are no self-loops at $v$ and no vertex $w$ with multiple edges between $v$ and $w$.
\end{definition}

\begin{lemma}\label{lemma: simple vertex is non-formal}
If $\graf$ has a simple essential vertex, then $\graf$ is not edge formal.
\end{lemma}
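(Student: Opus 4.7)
The plan is to apply Lemma~\ref{lemma: paradoxical} to the standard representative of a star class at $v$, which is visibly decomposable at the chain level; the bulk of the work is to show that it is not decomposable at the level of homology.

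First I would reduce, via Lemmas~\ref{lem:smoothing formality} and~\ref{lemma: reduce loops}, to the case where $\graf$ is smooth and contains no self-loops. Neither reduction destroys the simple essential vertex $v$: smoothing of bivalent vertices away from $v$ does not touch its local star, and replacing a self-loop elsewhere by an edge to a fresh leaf does not affect the half-edges at $v$. Because $v$ is simple and essential, three edges $e_1,e_2,e_3$ at $v$, with half-edges $h_1,h_2,h_3$, go to three distinct vertices. The standard star-class representative
\[ a = e_1(h_2-h_3)+e_2(h_3-h_1)+e_3(h_1-h_2) \in \fullyreduced{\graf;R} \]
is a cycle of bidegree $(1,2)$, each of whose summands is divisible by an edge; thus $a\in (E)\cdot\fullyreduced{\graf;R}$ is decomposable as a chain. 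By Lemma~\ref{lemma: paradoxical}, it then suffices to show that $[a]\in H_1(B_2(\graf);R)$ is not decomposable as an element of the $R[E]$-module $H_*(B(\graf);R)$.

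For non-decomposability I would apply the localization map $\psi$ of Proposition~\ref{proposition: les vertex reduced} at $v$, with base half-edge $h_1$. Rewriting $a = (e_1-e_3)h_{21}+(e_2-e_1)h_{31}$ gives
\[ \psi([a]) = \bigl([e_1-e_3],\,[e_2-e_1],\,0,\ldots,0\bigr) \in \bigoplus_{h\ne h_1} H_0(\fullyreduced{\graf_v;R})\{1\}, \]
where the classes $[e_i]$ are those of the edges in $H_0(\fullyreduced{\graf_v;R})\{1\}\cong R[\pi_0(\graf_v)]$. A putative decomposition $[a]=\sum_k e_k[\beta_k]$ with $[\beta_k]\in H_1(\graf;R)$ would produce, by $R[E]$-linearity of $\psi$, the equation $\psi([a])=\sum_k e_k\cdot\psi([\beta_k])$, where each $\psi([\beta_k])$ records the scalar flow coefficients of $\beta_k$ at the half-edges of $v$. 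The simple-vertex hypothesis ensures that $e_1,e_2,e_3$ terminate at distinct vertices, and when $v$ additionally separates them in $\graf_v$ the classes $[e_1],[e_2],[e_3]$ lie in distinct components of $\graf_v$; combining this with the cycle constraints on the flow coefficients of the $\beta_k$ gives the contradiction.

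The main obstacle is the case in which $\psi([a])=0$, i.e.\ when $e_1,e_2,e_3$ all lie in a common component of $\graf_v$ (so $v$ is non-separating). Here I would argue by induction on the number of essential vertices: the long exact sequence associated to Proposition~\ref{proposition: les vertex reduced} lifts $[a]$ to a class $\tilde\alpha\in H_1(B_2(\graf_v);R)$, which is naturally identified with a star-class pushforward at a simple essential vertex of $\graf_v$ (after a further smoothing reduction if necessary). The inductive hypothesis then applies, and the base case $|\essential|=1$ is immediate because $\graf$ is, up to isolated intervals, a star graph with $H_1(\graf;R)=0$, so non-decomposability of $[a]$ reduces to the nonvanishing of $[a]$ in the \'Swi\k{a}tkowski model, which is a direct calculation. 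The technical heart of the argument is to verify that the inductive hypothesis can indeed be invoked at each stage, which requires tracking how the $R[E]$-module structure and the simple-vertex property interact with vertex explosion.
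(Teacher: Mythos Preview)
Your approach differs substantially from the paper's, and the inductive step contains a genuine gap.

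The paper does \emph{not} work with the degree-$1$ star class at $v$. Instead, it works in the \emph{top} homological degree $N=|\essential|$: it chooses a nonzero class $\beta_v\in H_{N-1}(B_k(\graf_v);R)$ with $k$ minimal, takes the external product $\beta$ of $\beta_v$ with a star class at $v$, and argues that $\beta\in H_N(B_{k+2}(\graf);R)$ is paradoxically decomposable. The point of top degree is that cycles equal homology classes, so the unique representative of $\beta$ is visibly decomposable. Non-decomposability is then reduced to proving $H_N(B_{k+1}(\graf);R)=0$, which follows from injectivity of the connecting homomorphism $\delta$ in the vertex-explosion sequence. Simplicity of $v$ enters precisely here: the edges $e_1,\dots,e_\ell$ at $v$ are distinct in $E$, and this (together with the minimality of $k$) drives a short cancellation argument forcing $\ker\delta=0$.

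Your inductive step, by contrast, does not go through. When $\psi([a])=0$ you lift $[a]$ to $\tilde\alpha\in H_1(B_2(\graf_v);R)$, but $\tilde\alpha$ is not a star class at any vertex of $\graf_v$: the vertex $v$ has been destroyed, and there is no natural identification of $\tilde\alpha$ with a star class at some \emph{simple} essential vertex of $\graf_v$. More seriously, even granting that $\tilde\alpha$ is non-decomposable in $H_*(B(\graf_v);R)$, this does not imply that $[a]$ is non-decomposable in $H_*(B(\graf);R)$: a putative decomposition $[a]=\sum_k e_k[\beta_k]$ uses classes $[\beta_k]\in H_1(B_1(\graf);R)=H_1(\graf;R)$, and these need not lie in the image of $H_1(\graf_v;R)\to H_1(\graf;R)$ (indeed, a loop through $v$ never does). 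So the induction cannot be invoked as you describe. A smaller but related issue: your preliminary reduction ``smoothing does not touch the local star of $v$'' preserves valence but not simplicity---smoothing a bivalent path from $v$ back to $v$, or to a neighbour of $v$, can create a self-loop or a multi-edge at $v$.

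The underlying reason the paper moves to top degree is that in degree $1$ there is a large supply of loop classes in $H_1(B_1(\graf);R)$, and ruling out all possible edge-multiples of these is delicate. In top degree with minimal weight there is simply nothing below to decompose from, which is what makes the paper's argument clean.
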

\begin{proof}
Let $v$ be a simple essential vertex of $\graf$, and assume without loss of generality that $\graf$ has no vertices of valence $2$.
Then $H_{N}(B(\graf);R)\neq0$, so $H_{N-1}(B(\graf_v);R)\neq0$ by Proposition \ref{proposition: les vertex reduced}.
Choose a nonzero class $\beta_v$ in the group $ H_{N-1}(B_k(\graf_v);R)$ with $k$ chosen minimally so that this group is nonzero, and write $\beta$ for the external product of $\beta_v$ and a star class at $v$.
Since $\beta$ is represented by a nonzero cycle $b$ in top degree, we conclude that $\beta\neq0$. There are no boundaries in top degree, so the cycle $b$ is necessarily unique, and it is decomposable, since the standard representative of the star class is so. Thus, it suffices to show that $\beta$ is not decomposable.

Applying Proposition \ref{proposition: les vertex reduced} at $v$, we obtain the following piece of the exact sequence
\[
0
\to H_{N}(B_{k+1}(\graf);R)\to
\bigoplus^{\ell-1} H_{N-1}(B_k(\graf_v;R))\xrightarrow{\delta} H_{N-1}(B_{k+1}(\graf_v;R))\] where $\ell\coloneqq{}d(v)$.
Since $\beta$ is a nonzero element of $H_{N}(B_{k+2}(\graf);R)$, the assumption of decomposability is the assumption that $\beta$ is a nontrivial $R[E]$-linear combination of elements from $H_{N}(B_{k+1}(\graf);R)$.
Therefore it will suffice to show that $H_{N}(B_{k+1}(\graf);R)$ vanishes, ie, that the connecting homomorphism $\delta$ is injective.
Since $\graf_v$ has $N-1$ essential vertices, this degree is the top homological degree for $B_k(\graf_v)$,
so homology and cycles coincide. It follows that the kernel of $\delta$ consists of tuples of cycles $(b_2,\ldots, b_\ell)$ in the weight $N$ summand of $S_{N-1}(\graf_v;R)$ such that 
\[
\sum_{j=2}^\ell b_j(e_j-e_1)=0
\]
where $e_j$ is the $j$th edge incident on $v$.
The $e_j$ are distinct by the simplicity of $v$.
We may assume without loss of generality that $b_2\neq0$.

Let $b_{j,1}$ be the chain obtained from $b_j$ by replacing $e_\ell$ with $e_1$.
Since the differential is $R[E]$-linear, each $b_{j,1}$ is again a cycle.
We claim further that $b_{j,1}$ is nonzero if $b_j$ was.
To see this, we note that the difference $b_j-b_{j,1}$ is divisible by $(e_\ell-e_1)$, so that, if $b_{j,1}=0$, we may conclude that $b_j$ is the product of $(e_\ell-e_1)$ and a \emph{cycle} of top degree in weight $k-1$, which is necessarily not a boundary.
Since $k$ was chosen to be minimal with respect to the existence of such a cycle, this is a contradiction.

Thus, the sum $\sum_{j=2}^{\ell-1} b_{j,1}(e_j-e_1)$ vanishes modulo $(e_\ell-e_1)$ and so vanishes, since none of the terms contain $e_\ell$ by construction.
Applying this procedure repeatedly, we obtain a nonzero cycle $b_{2,\ell-2}$ with $b_{2,\ell-2}(e_2-e_1)=0$, a contradiction.
 \end{proof}

\subsection{Large graphs are not formal}
In order to reduce the case of a general large graph to the cases already considered, we will make use of the following device:

\begin{definition}
Let $M_1$ be a differential graded $A_1$-module and $M_2$ be a differential graded $A_2$-module.
An {\em $(A_1,A_2)$-retraction} of $M_2$ onto $M_1$ consists of 
\begin{itemize}
\item a retraction of rings $A_1\xrightarrow{\iota} A_2\xrightarrow{\pi} A_1$, and
\item a retraction of $\mathbb{Z}$-modules $M_1\xrightarrow{i}M_2\xrightarrow{p} M_1$
\end{itemize}
 where $i$ is $A_1$-linear with respect to the $A_1$-module structure on $M_2$ induced by $\iota$ and $p$ is $A_2$-linear with respect to the $A_2$-module structure on $M_1$ induced by $\pi$. 
\end{definition}

The relation of this notion to questions of formality is the following:

\begin{lemma}\label{lemma: retracts}
Let $M_1$ and $M_2$ be differential graded $A_1$- and $A_2$-modules, respectively, and suppose that there exists an $(A_1,A_2)$-retraction of $M_2$ onto $M_1$.
If $M_2$ is projective in each degree and formal over $A_2$, then $M_1$ is formal over $A_1$.
\end{lemma}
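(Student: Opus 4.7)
The plan is to use Lemma~\ref{lemma:formality} to convert the formality of $M_2$ into a genuine map $\varphi\colon M_2\to H_*(M_2)$ of differential graded $A_2$-modules inducing the identity on homology, then transfer this map along the retraction to produce an analogous map $\psi\colon M_1\to H_*(M_1)$ of differential graded $A_1$-modules. A quasi-isomorphism to the homology is all that is needed for formality of $M_1$, so no appeal to the projectivity of $M_1$ is required.

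Concretely, I would set
\[
\psi \coloneqq H_*(p)\circ \varphi\circ i.
\]
This is a chain map as a composition of chain maps, so the only content is to verify that $\psi$ is $A_1$-linear and induces the identity on $H_*(M_1)$. The identity statement is immediate:
\[
H_*(\psi) = H_*(p)\circ H_*(\varphi)\circ H_*(i) = H_*(p)\circ \id\circ H_*(i) = H_*(p\circ i) = \id.
\]
Hence $\psi$ is a quasi-isomorphism from $M_1$ to $H_*(M_1)$, and this one-arrow zig-zag is precisely a witness to the formality of $M_1$ over $A_1$.

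The only real bookkeeping is tracking which module structure sits on which object. The map $i$ is $A_1$-linear by hypothesis when $M_2$ is regarded as an $A_1$-module via $\iota$; $\varphi$ is $A_2$-linear, hence also $A_1$-linear with respect to the same pullback structure on both source and target; and the $A_2$-linearity of $p$ in conjunction with the identity $\pi\circ\iota=\id_{A_1}$ shows that for $a\in A_1$ and $x\in H_*(M_2)$,
\[
H_*(p)\bigl(\iota(a)\cdot x\bigr) = \pi(\iota(a))\cdot H_*(p)(x) = a\cdot H_*(p)(x),
\]
so $H_*(p)$ is $A_1$-linear when its source carries the $A_1$-action induced by $\iota$. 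The composite $\psi$ is therefore $A_1$-linear.

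The only step requiring care is the initial appeal to Lemma~\ref{lemma:formality}, which needs $M_2$ to be degreewise projective over $A_2$; this is exactly the hypothesis we have. Accordingly, I expect no genuine obstacle beyond the routine verification of scalar compatibilities outlined above.
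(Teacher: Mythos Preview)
Your proof is correct and follows essentially the same approach as the paper: both apply Lemma~\ref{lemma:formality} to obtain a map $M_2\to H_*(M_2)$, form the composite $H_*(p)\circ\varphi\circ i$, and verify that it is $A_1$-linear and induces the identity on homology. You have spelled out the $A_1$-linearity check that the paper leaves as a ``diagram chase.''
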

\begin{proof}
Our hypotheses on $M_2$ and Lemma~\ref{lemma:formality} guarantee the existence of a map $M_2\to H_*(M_2)$ of differential graded $A_2$-modules inducing the identity on homology.
It follows that the map in homology induced by the composite \[M_1\to M_2\to H_*(M_2)\to H_*(M_1)\] is $H_*(p)\circ \mathrm{id}_{H_*(M_2)}\circ H_*(i)=\mathrm{id}_{H_*(M_1)}$, so it suffices to check that this map is $A_1$-linear, which follows from a diagram chase.
\end{proof}

\begin{definition} Let $\graf$ and $\graf'$ be graphs.
An \emph{algebraic retraction} (over $R$) from $\graf'$ to $\graf$ is a $(R[E(\graf)],R[E(\graf')])$-retraction of $\widetilde{S}(\graf';R)$ onto $\widetilde{S}(\graf;R)$.
In the presence of an algebraic retraction, we say that $\graf$ is an \emph{algebraic retract} of $\graf'$ (over $R$).
\end{definition}

The composition of algebraic retractions is an algebraic retraction.
An algebraic retraction need not arise from a topological retraction between configuration spaces. 

The ``only if'' direction of Theorem \ref{thm:formality} is an immediate consequence of the following result in combination with Lemmas \ref{lemma: trivalent is non-formal}, \ref{lemma: simple vertex is non-formal}, and \ref{lemma: retracts} (see Figure~\ref{figure: special large graphs} for an explanation of unfamiliar terminology).

\begin{lemma}\label{lemma: simple vertex or retract}
If $\graf$ is a smooth large graph, then $\graf$ has one of the following graphs as an algebraic retract: 
\begin{enumerate}
\item a graph with a simple essential vertex,
\item the theta graph $\thetagraph{3}$, or
\item the $A$ graph $\Agraph$.
\end{enumerate}
\end{lemma}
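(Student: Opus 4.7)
The plan is to proceed by an explicit case analysis, constructing the required algebraic retractions from a toolkit of elementary operations. First I would assemble some basic algebraic retractions that serve as building blocks: (i) If $\graf'$ is obtained from $\graf$ by attaching a pendant edge at some vertex, then $\graf$ is an algebraic retract of $\graf'$; on polynomial rings the new edge is sent to $0$ and existing edges are fixed, and on reduced \'{S}wi\k{a}tkowski complexes the new half-edge generators are sent to $0$, which is readily checked to commute with the differential. (ii) A smoothing induces an algebraic retraction via the natural chain map $\fullyreduced{-;R}$ from Section~\ref{section:swiatkowski}, retracted by a subdivision map. (iii) By Lemma~\ref{lemma: reduce loops}, a self-loop at a vertex $v$ yields a canonical algebraic retraction of $\fullyreduced{\graf;R}$ onto $\fullyreduced{\graf_{\!-};R}$, where the loop is replaced by a pendant edge. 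Since algebraic retractions compose, these moves can be freely chained.

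With this toolkit, assume $\graf$ is smooth, large, and that no essential vertex of $\graf$ is simple; thus every essential vertex either carries a self-loop or shares at least two edges with another vertex. I would then split the argument into two main cases. Case (a): some essential vertex $v$ carries a self-loop. I would use (iii) to convert the self-loop at $v$ to a pendant, then iteratively apply (i) and (ii) to peel off edges and bivalent vertices outside a minimal subgraph that still contains an essential vertex with the required non-simple structure, eventually landing on $\Agraph$. Case (b): no essential vertex carries a self-loop. Then every essential vertex $v$ is joined to some vertex $w$ by a double edge; since $v$ has valence at least $3$, a third edge emanates from $v$. I would trace this third edge outward, showing that one may either recover a $\thetagraph{3}$ subgraph directly (when the third edge doubles back to $w$) or, using (i) and (ii) to collapse intervening bivalent chains and discard pendants, reduce the configuration to one in which the three edges at $v$ all terminate at $w$, yielding $\thetagraph{3}$.

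The main obstacle will be verifying at each peeling step that the candidate retractions commute with the \'{S}wi\k{a}tkowski differential and respect the $R[E]$-module structure, since the differential couples half-edge generators at each essential vertex with the incident edge variables; in particular, pendant deletion at a vertex which also supports a surviving half-edge generator requires care. I expect to handle this by ordering the reductions: first perform smoothings and pendant deletions in the part of $\graf$ farthest from the intended target subgraph, then invoke loop-splitting only when the ambient graph has been simplified enough that the summand decomposition of Lemma~\ref{lemma: reduce loops} isolates the desired copy of $\Agraph$ or $\thetagraph{3}$. A supplementary induction on the number of essential vertices outside the target subgraph, keyed to the fact that each reduction strictly decreases either the edge count or the number of non-target essential vertices, should close the argument.
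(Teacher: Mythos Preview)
Your proposal has two genuine gaps.

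First, your pendant--deletion retraction (i) does not commute with the differential as you have described it. Suppose the new pendant edge $e'$ is attached at a vertex $v$ of $\graf$ with half-edge $h'$, and let $h_1$ be any other half-edge at $v$. In the reduced complex the element $h_1-h'$ has $\partial(h_1-h')=e_1-e'$. Under your proposed map $e'\mapsto 0$ and $h_1-h'\mapsto 0$, the image of the boundary is $e_1$ while the boundary of the image is $0$. The only way to repair this is to send $e'$ not to $0$ but to some adjacent edge $e_1$ (and $h'$ to $h_1$); but then you are no longer ``deleting'' the pendant, you are collapsing it onto an adjacent edge, which is a special case of a much more general move.

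Second, even granting a corrected pendant deletion, your toolkit of pendant deletion, smoothing, and loop--splitting is not strong enough to carry out the case analysis. Take $\graf=\thetagraph{4}$, the graph with two vertices joined by four parallel edges. It is smooth and large, has no self-loops, no pendants, and no bivalent vertices, so none of your three moves applies. Your case~(b) asserts that when the ``third edge doubles back to $w$'' one recovers a $\thetagraph{3}$ subgraph, but the existence of a $\thetagraph{3}$ \emph{subgraph} does not by itself yield an algebraic retraction onto $\thetagraph{3}$: you still need a chain--level left inverse $\fullyreduced{\thetagraph{4};R}\to\fullyreduced{\thetagraph{3};R}$, and nothing in your toolkit provides one. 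The same issue recurs throughout case~(b) whenever you need to discard an edge that is not a pendant.

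What the paper does instead is prove a single \emph{surgery lemma}: if $\graphfont{\Delta}\subseteq\graf'$ is a connected subgraph meeting the rest of $\graf'$ only at two vertices $v_1\neq v_2$, then replacing $\graphfont{\Delta}$ by a single edge $e_0$ from $v_1$ to $v_2$ yields an algebraic retract. The retraction sends every edge of $\graphfont{\Delta}$ to $e_0$ and every half-edge of $\graphfont{\Delta}$ at $v_i$ to the half-edge of $e_0$ at $v_i$ (and kills half-edges at interior vertices of $\graphfont{\Delta}$). This one move subsumes your corrected pendant deletion, handles $\thetagraph{4}\to\thetagraph{3}$ in one step (take $\graphfont{\Delta}$ to be two of the four edges), and makes the case analysis short: strip self-loops using Lemma~\ref{lemma: reduce loops} and surgery, then either a simple essential vertex remains, or some pair of vertices shares $\geq 2$ edges, in which case one or two surgeries produce $\thetagraph{3}$ or $\Agraph$.
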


\begin{figure}[ht]
\begin{center}
\begin{tikzpicture}
\begin{scope}[xshift=3cm]
\fill[black] (-.25,0) circle (2.5pt);
\fill[black] (.25,0) circle (2.5pt);
\fill[black] (-.25,-.4) circle (2.5pt);
\fill[black] (.25,-.4) circle (2.5pt);
\draw(-.25,-.4) --(-.25,0) -- (.25,0) -- (.25,-.4);
\draw(-.25,0) .. controls (-.25,.6) and (.25,.6) .. (.25,0);
\draw(0,-.6) node[below]{$\Agraph$};
\end{scope}
\begin{scope}
\fill[black] (-.5,0) circle (2.5pt);
\fill[black] (.5,0) circle (2.5pt);
\draw(0,0) circle (.5cm);
\draw(.5,0)--(-.5,0);
\draw(0,-.6) node[below]{$\ \ \thetagraph{3}$};
\end{scope}
\end{tikzpicture}
\end{center}
\caption{The theta graph and the $A$ graph}\label{figure: special large graphs}
\end{figure}
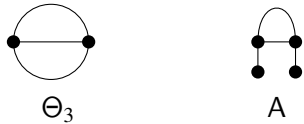

The proof of this result relies on the fact that algebraic retractions arise geometrically as the result of \emph{surgeries} on graphs (see Figure \ref{figure: surgeries}).

\begin{lemma}\label{lemma: surgery}
Let $\graf$ and $\graf'$ be graphs, and suppose that $\graf$ is obtained from $\graf'$ by replacing a connected subgraph $\graphfont{\Delta}$, attached to the rest of $\graf$ only at the vertices $v_1\neq v_2$, with a single edge $e_0$ between $v_1$ and $v_2$, ie, 
\[
\graf' = (\graf\setminus e_0) \coprod_{v_1\cup v_2} \graphfont{\Delta}
\]
Then $\graf$ is an algebraic retract of $\graf'$ over any $R$.
\end{lemma}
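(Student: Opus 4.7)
The plan is to construct the ring retraction $(\iota,\pi)$ and the chain retraction $(i,p)$ explicitly, using a choice of combinatorial data in $\graphfont{\Delta}$.

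First I would choose a simple edge path $P$ in $\graphfont{\Delta}$ from $v_1$ to $v_2$ with consecutive edges $f_1,\ldots,f_k$ and interior vertices $u_1,\ldots,u_{k-1}$. I would then define the ring maps: let $\iota$ be the identity on $E(\graf)\setminus\{e_0\}$ with $\iota(e_0)=f_1$, and let $\pi$ be the identity on $E(\graf)\setminus\{e_0\}$ with $\pi(e)=e_0$ for every $e\in E(\graphfont{\Delta})$. Then $\pi\circ\iota=\mathrm{id}$. The projection $p$ is defined to act by $\pi$ on the edge-ring scalar and, on the vertex factors $\bigotimes_{v\in V(\graf')}\widetilde{S}(v)$, by the identity on factors indexed by $V(\graf)\setminus\{v_1,v_2\}$; by the augmentation $\varnothing\mapsto1$, $(\text{all half-edge differences})\mapsto0$ on factors indexed by $V(\graphfont{\Delta})\setminus\{v_1,v_2\}$; and at $v_1$ (respectively $v_2$) by sending a half-edge difference $h_i-h_j$ to $\tilde h_i-\tilde h_j$, where $\tilde h_\ell=h_\ell$ when $h_\ell$ is a half-edge of $\graf\setminus\{e_0\}$ and $\tilde h_\ell=h_{e_0}^{v_1}$ (respectively $h_{e_0}^{v_2}$) otherwise. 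Chain-map-ness of $p$ at an interior or extra vertex $w$ of $\graphfont{\Delta}$ follows because every edge incident to $w$ lies in $\graphfont{\Delta}$ and hence maps to $e_0$ under $\pi$, so any half-edge difference at $w$ has vanishing image under $\pi\circ\partial$; at $v_1$ and $v_2$ it is a direct computation that $\partial(\tilde h_i-\tilde h_j)=\pi(e_i)-\pi(e_j)$.

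Next I would define the inclusion $i$ to be $\iota$-linear on the edge-ring scalar, the identity on vertex factors at $V(\graf)\setminus\{v_1,v_2\}$, to insert $\varnothing$ at every vertex of $\graphfont{\Delta}\setminus\{v_1,v_2\}$, and to act at $v_1$ and $v_2$ by the formulas
\[
h_{e_0}^{v_1}-h_a \;\longmapsto\; h_{f_1}^{v_1}-h_a, \qquad h_{e_0}^{v_2}-h_a \;\longmapsto\; (h_{f_k}^{v_2}-h_a)+\sum_{j=1}^{k-1}\delta^{(j)},
\]
where $\delta^{(j)}$ denotes the generator of $\widetilde{S}(\graf';R)$ whose vertex factor at $u_j$ is $h_{f_j}^{u_j}-h_{f_{j+1}}^{u_j}$ and whose vertex factor at every other vertex of $\graphfont{\Delta}$ is $\varnothing$. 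The corrective sum is engineered so that its boundary equals $f_1-f_k$, reconciling the naive endpoint $f_k$ of the path with the chosen target $\iota(e_0)=f_1$ and making $i$ a chain map on these generators. The identity $p\circ i=\mathrm{id}$ is then transparent: each correction term $\delta^{(j)}$ lies in the kernel of $p$ (the augmentation at $u_j$ kills $h_{f_j}^{u_j}-h_{f_{j+1}}^{u_j}$), and at $v_1,v_2$ the map $p$ inverts the substitution $h_{e_0}\leftrightarrow h_{f_1}$ (respectively $h_{f_k}$).

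The main obstacle will be verifying that $i$ is a chain map on tensor generators that simultaneously involve nontrivial factors at both $v_1$ and $v_2$, since the correction term at $v_2$ occupies multiple tensor positions (the vertices $u_j$). This is handled by first checking the chain-map property on generators with $\varnothing$ outside $\{v_1,v_2\}$ and then extending by $R[E(\graf)]$-linearity together with a Leibniz-type computation; the cross terms from $\partial$ applied to products of $v_1$- and $v_2$-factors exactly cancel against the contributions from the $\delta^{(j)}$. By construction the resulting quadruple $(\iota,\pi,i,p)$ is then an algebraic retraction of $\widetilde{S}(\graf';R)$ onto $\widetilde{S}(\graf;R)$.
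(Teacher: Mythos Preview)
Your construction is correct and essentially matches the paper's. Your projection $p$ and ring retraction $(\iota,\pi)$ coincide with the paper's map $r$: it sends every edge of $\graphfont{\Delta}$ to $e_0$, every half-edge of $\graphfont{\Delta}$ at $v_i$ to the half-edge $h_i$ of $e_0$, and kills half-edges at interior vertices of $\graphfont{\Delta}$.

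The one place your write-up is more laborious than necessary is the construction of $i$ and the verification that it is a chain map. The paper observes that a subdivision of $\graf$ (subdividing only $e_0$, into your path $P$) admits a graph embedding into $\graf'$, and then defines $i$ as the composite of the non-canonical subdivision left-inverse $\widetilde{S}(\graf;R)\to\widetilde{S}(\graf'';R)$ (described just before the lemma) with the functorial map induced by the embedding $\graf''\hookrightarrow\graf'$. Both factors are already known to be chain maps, so the composite is automatically one---no Leibniz computation is needed. If you unwind this composite you recover exactly your formulas with the correction terms $\delta^{(j)}$; your ``main obstacle'' dissolves once you recognize that the sum $(h_{f_k}^{v_2}-h_a)+\sum_j\delta^{(j)}$ has boundary $f_1-e_a=\iota(e_0-e_a)$, which is the only fact the Leibniz verification uses. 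Your claim about ``cross terms cancelling against the $\delta^{(j)}$'' is slightly misdescribed: nothing cancels, the point is simply that the corrected $v_2$-factor has the right boundary.
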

We call this operation a \emph{surgery}.
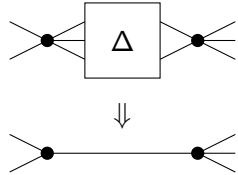
\begin{figure}[ht]
\begin{center}
\begin{tikzpicture}
\begin{scope}[xshift=6.5cm]
\begin{scope}[xshift=0cm]
\fill[black] (0,.5) circle (2.5pt);
\draw(0,.5) -- (-.5,.25);
\draw(0,.5) -- (-.5,.75);
\draw(0,.5) -- (.5,.25);
\draw(0,.5) -- (.5,.5);
\draw(0,.5) -- (.5,.75);
\draw (.5,0) rectangle (1.5,1);
\fill[black] (2,.5) circle (2.5pt);
\draw(2,.5) -- (1.5,.25);
\draw(2,.5) -- (1.5,.75);
\draw(2,.5) -- (2.5,.25);
\draw(2,.5) -- (2.5,.5);
\draw(2,.5) -- (2.5,.75);
\draw(1,.5)node{$\graphfont{\Delta}$};
\end{scope}
\draw(1,-.5)node{$\Downarrow$};
\begin{scope}[yshift=-1.5cm]
\fill[black] (0,.5) circle (2.5pt);
\draw(0,.5) -- (-.5,.25);
\draw(0,.5) -- (2,.5);
\draw(0,.5) -- (-.5,.75);
\fill[black] (2,.5) circle (2.5pt);
\draw(2,.5) -- (2.5,.25);
\draw(2,.5) -- (2.5,.75);
\draw(2,.5) -- (2.5,.5);
\end{scope}
\end{scope}
\end{tikzpicture}
\end{center}
\caption{Depiction of a surgery}\label{figure: surgeries}
\end{figure}
Assuming this result momentarily, we complete the proof of Theorem \ref{thm:formality}.

\begin{proof}[Proof of Lemma~\ref{lemma: simple vertex or retract}]
A connected component is an algebraic retract and largeness is a local property so it suffices to consider the connected case.
Suppose $\graf$ has a vertex of valence strictly greater than three with a self-loop.
The direct sum decomposition of Lemma~\ref{lemma: reduce loops} gives an algebraic retraction from $\graf$ to the graph $\graf_{\!-}$ obtained by replacing a self-loop with a tail. 
The graph $\graf_{\!-}$ is smooth, connected, and large if $\graf$ was.
Suppose $\graf$ has a vertex of valence three with a self-loop. 
There is a surgery replacing the closed star of the vertex with a tail which preserves smoothness and largeness.
Then iteratively we may assume $\graf$ contains no self-loops.

Suppose that $\graf$ has a pair of vertices sharing three edges. 
We obtain an algebraic retraction onto $\thetagraph{3}$ using Lemma \ref{lemma: surgery} with $\graphfont{\Delta}$ the complement in $\graf$ of two of these (open) edges. 
See Figure~\ref{figure: surgeries lemma}(a).
Suppose that $\graf$ has a pair of vertices $v$ and $w$ sharing two edges $e_1$ and $e_2$. 
Since $\graf$ is smooth, each of $v$ and $w$ has a third edge.
If there is a path of edges disjoint from $e_1$ and $e_2$ connecting $v$ and $w$, then surgery produces an algebraic retraction onto $\thetagraph{3}$. 
See Figure~\ref{figure: surgeries lemma}(b).
If not, two surgeries produce an algebraic retraction onto $\Agraph$.
See Figure~\ref{figure: surgeries lemma}(c).

Then if there is no algebraic retraction from $\graf$ onto $\thetagraph{3}$ or $\Agraph$, the removal of self-loops constitutes an algebraic retraction onto a connected smooth large graph with a simple essential vertex.
\end{proof}

\begin{figure}[ht]
\begin{center}
\begin{tikzpicture}
\begin{scope}[xshift=0cm]
\begin{scope}[xshift=0cm]
\fill[black] (0,.5) circle (2.5pt);
\fill[black] (2,.5) circle (2.5pt);
\draw(0,.5) -- (2,.5);
\draw(0,.5) to [bend right=50] (2,.5);
\draw(0,.5) to [bend right=-50] (2,.5);
\draw(0,.5) to [bend right=50] (.5,-.7);
\draw(0,.5) to [bend right=30] (.5,-.3);
\draw(2,.5) to [bend right=-50] (1.5,-.7);
\draw[fill=white] (.5,.25) rectangle (1.5,-.75);
\draw[dashed](0,.5) to [bend right=50] (2,.5);
\draw(1,-.25)node{$\graphfont{\Delta}$};
\end{scope}
\draw(1,-1.15)node{$\Downarrow$};
\begin{scope}[yshift=-2.5cm]
\fill[black] (0,.5) circle (2.5pt);
\fill[black] (2,.5) circle (2.5pt);
\draw(0,.5) -- (2,.5);
\draw(0,.5) to [bend right=50] (2,.5);
\draw(0,.5) to [bend right=-50] (2,.5);
\draw(1,-.6) node{(a)};
\end{scope}
\end{scope}
\begin{scope}[xshift=3.5cm]
\begin{scope}[xshift=0cm]
\fill[black] (0,.5) circle (2.5pt);
\draw(0,.5) .. controls (.4,1.5) and (1.6,1.5) .. (2,.5);
\draw(0,.5) .. controls (.4,-.5) and (1.6,-.5) .. (2,.5);
\draw(0,.5) -- (.5,.25);
\draw(0,.5) -- (.5,.5);
\draw(0,.5) -- (.5,.75);
\draw (.5,0) rectangle (1.5,1);
\fill[black] (2,.5) circle (2.5pt);
\draw(2,.5) -- (1.5,.25);
\draw(2,.5) -- (1.5,.75);
\draw(1,.5)node{$\graphfont{\Delta}$};
\end{scope}
\draw(1,-.75)node{$\Downarrow$};
\begin{scope}[yshift=-2.5cm]
\fill[black] (0,.5) circle (2.5pt);
\draw(0,.5) .. controls (.4,1.5) and (1.6,1.5) .. (2,.5);
\draw(0,.5) .. controls (.4,-.5) and (1.6,-.5) .. (2,.5);
\draw(0,.5) -- (2,.5);
\fill[black] (2,.5) circle (2.5pt);
\draw(1,-.6) node{(b)};
\end{scope}
\end{scope}
\begin{scope}[xshift=7cm]
\begin{scope}[xshift=0cm]
\fill[black] (.5,.5) circle (2.5pt);
\fill[black] (1.5,.5) circle (2.5pt);
\fill[black] (-.5,.5) circle (2.5pt);
\fill[black] (2.5,.5) circle (2.5pt);
\draw(.5,.5) to[bend right=50] (1.5,.5);
\draw(.5,.5) to[bend right=-50] (1.5,.5);
\draw(-.5,.5) -- (.5,.5);
\draw(2.5,.5) to[bend right=-50] (1.5,.5);
\draw(2.5,.5) to[bend right=50] (1.5,.5);
\draw(1.5,.5) -- (1.7,.5);
\draw[fill=white] (.3,.2) rectangle (-.3,.8);
\draw[fill=white] (2.3,.2) rectangle (1.7,.8);
\draw(0,.5)node{$\graphfont{\Delta}_1$};
\draw(2,.5)node{$\graphfont{\Delta}_2$};
\end{scope}
\draw(1,-.125)node{$\Downarrow$};
\begin{scope}[yshift=-1.25cm]
\fill[black] (.5,.5) circle (2.5pt);
\fill[black] (1.5,.5) circle (2.5pt);
\fill[black] (-.5,.5) circle (2.5pt);
\fill[black] (2.5,.5) circle (2.5pt);
\draw(.5,.5) to[bend right=50] (1.5,.5);
\draw(.5,.5) to[bend right=-50] (1.5,.5);
\draw(-.5,.5) -- (.5,.5);
\draw(2.5,.5) to[bend right=-50] (1.5,.5);
\draw(2.5,.5) to[bend right=50] (1.5,.5);
\draw(1.5,.5) -- (1.7,.5);
\draw[fill=white] (2.3,.2) rectangle (1.7,.8);
\draw(2,.5)node{$\graphfont{\Delta}_2$};
\draw(1,-.125)node{$\Downarrow$};
\end{scope}
\begin{scope}[yshift=-2.5cm]
\fill[black] (.5,.5) circle (2.5pt);
\fill[black] (1.5,.5) circle (2.5pt);
\fill[black] (-.5,.5) circle (2.5pt);
\fill[black] (2.5,.5) circle (2.5pt);
\draw(.5,.5) to[bend right=50] (1.5,.5);
\draw(.5,.5) to[bend right=-50] (1.5,.5);
\draw(-.5,.5) -- (.5,.5);
\draw(2.5,.5) -- (1.5,.5);
\draw(1,-.6) node{(c)};
\end{scope}
\end{scope}
\end{tikzpicture}
\end{center}
\caption{Surgeries for the proof of Lemma~\ref{lemma: simple vertex or retract}}\label{figure: surgeries lemma}
\end{figure}
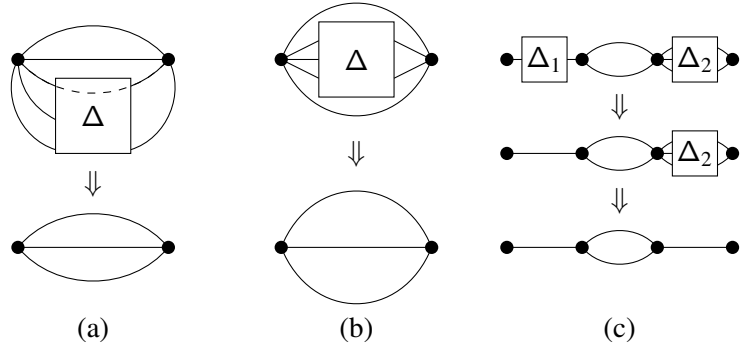

To prepare for the proof of the surgery lemma, we observe that the \'{S}wi\k{a}tkowski complex
enjoys a certain non-canonical functoriality for subdivisions.
More precisely, the map induced by a smoothing $f:\graf_1\to \graf_2$ admits a non-canonical left inverse, which we regard as corresponding to the subdivision $f^{-1}$.
It suffices to construct this morphism in the case of a subdivision of a single edge $e$ with half-edges $h_1$ and $h_2$ into two edges $e_1$ and $e_2$ meeting at the common vertex $v$ with corresponding half-edges $h_{v1}$ and $h_{v2}$.
We define the map in this case by the assignments $e\mapsto e_1$, $h_1\mapsto h_1$, and $h_2\mapsto h_{v1}-h_{v2}+h_2$. 
See Figure~\ref{fig: noncanonical left inverse}.
This map is a map of differential bigraded $R[E_2]$-modules, where the action of $R[E_2]$ on the target is via the map $R[E_2]\to R[E_1]$ just constructed and hence also non-canonical.
The same remarks hold for reduced complexes.
\begin{figure}[ht]
\begin{center}
\begin{tikzpicture}
\begin{scope}[xshift=0cm,yshift=-1.5cm]
\fill[black] (0,0) circle (2.5pt);
\fill[black] (2,0) circle (2.5pt);
\fill[black] (-2,0) circle (2.5pt);
\draw (-2,0)--(2,0);
\draw[line width=2pt] (.8,0)--(-.8,0);
\draw[line width=2pt] (1.2,0)--(2,0);
\draw[line width=2pt] (-1.2,0)--(-2,0);
\draw(-2,0)node[left]{$v_1$};
\draw(2,0)node[right]{$v_2$};
\draw(.4,0)node[below]{$h_{v2}$};
\draw(-.4,0)node[below]{$h_{v1}$};
\draw(1.6,0)node[below]{$h_{2}$};
\draw(-1.6,0)node[below]{$h_{1}$};
\end{scope}
\begin{scope}[yshift=0]
\draw[dashed,->,shorten >=5pt,shorten <=5pt] (2,0)--(2,-1.5);
\draw[dashed,->,shorten >=5pt,shorten <=5pt] (-2,0)--(-2,-1.5);
\draw[dashed,->,shorten >=5pt,shorten <=5pt] (.4,0)--(-.8,-1.5);
\draw[dashed,->,shorten >=5pt,shorten <=5pt] (-.4,0)--(-1.2,-1.5);
\fill[black] (2,0) circle (2.5pt);
\fill[black] (-2,0) circle (2.5pt);
\draw (-2,0)--(2,0);
\draw[line width=2pt](.4,0)--(2,0);
\draw[line width=2pt](-.4,0)--(-2,0);
\draw(-2,0)node[left]{$v_1$};
\draw(1.2,0)node[above]{$h_2$};
\draw(-1.2,0)node[above]{$h_1$};
\draw(2,0)node[right]{$v_2$};
\end{scope}
\end{tikzpicture}
\end{center}
\caption{A choice of left inverse for the map induced by a smoothing}
\label{fig: noncanonical left inverse}
\end{figure}
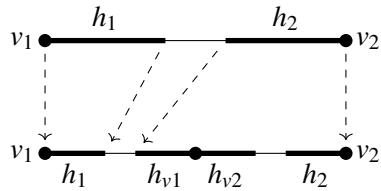

\begin{proof}[Proof of Lemma \ref{lemma: surgery}]
There is a (non-canonical) graph embedding of a subdivision of $\graf$ into $\graf'$, where only the edge $e_0$ is subdivided.
Thus, for any surgery, there is a non-canonical map $\widetilde{S}(\graf;R)\to \widetilde{S}(\graf';R)$.
This non-canonical map admits a left inverse $r$, defined as follows. Write $\graf'$ as $(\graf\setminus e_0) \coprod_{v_1\cup v_2}\graphfont{\Delta}$. Then:

\begin{align*}
r(h)&=\begin{cases}
h_i & h\in H(\graphfont{\Delta}),\, v(h)=v_i\\
0 & h\in H(\graphfont{\Delta}),\, v(h)\neq v_i\\
h & h\not\in H(\graphfont{\Delta})
\end{cases}&r(e)&=\begin{cases}
e_0&e\in E(\graphfont{\Delta})\\
e&e\not\in E(\graphfont{\Delta}).
\end{cases}
\end{align*}

The map on edges endows $\widetilde{S}(\graf;R)$ with a $R[E(\graf')]$-module structure, and linearity is a direct verification.
By inspection, these maps constitute a retraction both at the level of modules and at the level of rings.
\end{proof}

\section{Two isomorphisms}\label{section:two isomorphisms}

We return to the proof of Theorem \ref{thm:module iso}.
Along the way, we will encounter two very different routes to the \'{S}wi\k{a}tkowski complex.
In Section \ref{section:functorial model}, we review the method of \cite{AnDrummond-ColeKnudsen:SSGBG}, which has the advantage that it naturally outputs a functor on the category $\Gph$.
On the other hand, there is the cubical deformation retract introduced by \'{S}wi\k{a}tkowski \cite{Swiatkowski:EHDCSG}, which offers a more direct and geometric comparison to $B(\graf)$ at the cost of non-functorial choices.
Our argument will combine the advantages of these two approaches.

In this section and the next, we write $C^\sing(X)$ for the singular chain complex of the topological space $X$. 
If $X$ is a CW complex, we denote the cellular chain complex of $X$ by $C(X)$. 
We will make use of the existence of a zig-zag 
\[
C^\sing\xleftarrow{\sim}\bullet\xrightarrow{\sim} C
\] 
of quasi-isomorphisms connecting these complexes, which is natural for cellular maps.
The specifics of the intermediate object will play no role here, but see \cite[Construction 2.22]{AnDrummond-ColeKnudsen:SSGBG} for one option.

\subsection{Functorial model}\label{section:functorial model} We now recall some of the work of \cite{AnDrummond-ColeKnudsen:SSGBG}.
The starting point is a cell complex introduced by Abrams \cite[Definition 2.1]{Abrams:CSBGG}.

\begin{definition}
Let $X$ be a cell complex.
The $k$th unordered \emph{configuration complex} of $X$ is the subspace 
\[
B_k^\Box(X)=\left(\bigcup_{\overline{c}_i\cap \overline{c}_j=\varnothing} c_1\times\cdots \times c_k\right)_{\!\!\!\mbox{$/\Sigma_k$}}\subseteq B_k(X)
\] where the union is taken over the set of $k$-tuples of disjoint open cells of $X$.
\end{definition}

The configuration complexes of $X$ are approximations to the configuration spaces of $X$ by cell complexes, and the accuracy of this approximation improves with subdivision.
Following \cite[\textsection 3]{FarleySabalka:DMTGBG}, we say that a graph $\graf$ is \emph{sufficiently subdivided for $k$} if, first, every path in $\graf$ between essential vertices passes through at least $k-1$ edges, and, second, every loop passes through at least $k+1$ edges.

\begin{theorem}[{\cite[Theorem 2.1]{Abrams:CSBGG}}]\label{thm:abrams}
Let $\graf\to \graf'$ be a subdivision with $\graf'$ sufficiently subdivided for $k$.
The inclusion $B_k^\Box(\graf')\to B_k(\graf)$ is a deformation retract.
\end{theorem}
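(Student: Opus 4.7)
The plan is to exhibit a deformation retraction of $B_k(\graf)$ onto $B_k^\Box(\graf')$. Since a subdivision is a homeomorphism on underlying spaces, $B_k(\graf)=B_k(\graf')$, and it suffices to construct such a retraction inside $B_k(\graf')$.

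The obstruction to a configuration $c\in B_k(\graf')$ lying in $B_k^\Box(\graf')$ is the presence of a \emph{conflict}: two configuration points whose containing open cells have intersecting closures. My strategy would be to resolve conflicts by explicit continuous motions along edges, pushing each offending point toward a canonical unoccupied cell. Parametrizing each oriented edge of $\graf'$ by $[0,1]$, the conflicts fall into three local types---two points on the same edge, a point at a vertex and another near that vertex on an incident edge, or two points near a shared endpoint of distinct edges---each of which admits a natural linear-interpolation fix once a target cell has been selected.

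The role of sufficient subdivision is to guarantee that target cells always exist. If every path between essential vertices has at least $k-1$ edges and every loop has at least $k+1$ edges, then for any placement of $k$ distinct points on $\graf$ there is enough room between essential features of $\graf'$ to accommodate all of them in pairwise closure-disjoint cells. This combinatorial existence statement would be the first lemma I establish.

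The main obstacle is ensuring that the conflict-resolution rule varies continuously with the configuration, since the choice of target cell is combinatorial and changes discretely across walls in $B_k(\graf')$. The cleanest route, I expect, is to extend the CW structure of $B_k^\Box(\graf')$ to a CW structure on $B_k(\graf')$ and construct a discrete Morse matching, in the spirit of \cite{FarleySabalka:DMTGBG}, that pairs every cell outside $B_k^\Box(\graf')$ with an adjacent collapsing partner directed toward $B_k^\Box(\graf')$; the induced gradient flow then furnishes the required continuous deformation. Sufficient subdivision is exactly what makes each required pairing locally available, and the interesting technical work is then entirely in verifying that the matching is acyclic and that its collapse directions are globally consistent.
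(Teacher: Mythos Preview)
The paper does not prove this statement. Theorem~\ref{thm:abrams} is quoted verbatim from Abrams' thesis \cite[Theorem~2.1]{Abrams:CSBGG} and used as a black box; the paper supplies no argument of its own. There is therefore nothing in the paper to compare your proposal against.

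As a side remark on your sketch itself: the first half, resolving conflicts by pushing points along edges with sufficient subdivision guaranteeing room, is essentially the shape of Abrams' original argument. The second half, however, is on shakier ground. The discrete Morse theory of \cite{FarleySabalka:DMTGBG} is applied to the finite cube complex $B_k^\Box(\graf')$ in order to collapse it further; it is not used to relate $B_k^\Box(\graf')$ to the ambient open manifold $B_k(\graf')$. To run a discrete Morse argument you would first need a CW structure on $B_k(\graf')$ having $B_k^\Box(\graf')$ as a subcomplex, and no such structure is readily available---$B_k(\graf')$ is a noncompact open subset of $(\graf')^k/\Sigma_k$. Producing one and matching the extra cells is not obviously easier than the direct geometric retraction you outline first, so if you pursue a proof you are better served by making the pushing argument rigorous.
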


In order to work functorially and with all $k$ at once, it is more convenient to consider all subdivisions simultaneously.
Denoting by $\subd$ the set of subdivisions of $\graf$, viewed as a category under refinement, Theorem \ref{thm:abrams} implies that the natural map \[\colim_{\graf'\in \subd}B_k^\Box(\graf')\to B_k(\graf)\] is a weak homotopy equivalence \cite[Theorem 2.8]{AnDrummond-ColeKnudsen:SSGBG}.

Including arbitrary subdivisions has the further benefit of allowing one to work locally on $\graf$.
In order to leverage this flexibility, we return to the heuristic interpretation of Remark \ref{remark:heuristic}, in which generators of $\intrinsic{\graf}$ represent ``states'' in $B(\graf)$ obtained by prescribing ``local states'' near vertices and along edges, which are compatible on half-edges.

More precisely, define $\starreduced{\stargraph{n}}$ to be the subcomplex of $S(\stargraph{n})$ spanned over $\mathbb{Z}[E]$ by the symbols $\varnothing$, the star vertex and half-edges at the star vertex.
For $e$ an edge, viewed as a $1$-cell, define $\starreduced{e}$ as the subcomplex of $S(e)$ spanned by basis elements with no vertices or half-edges (then $\starreduced{e}$ is canonically isomorphic to $\mathbb{Z}[e]$). For these atomic graphs, one can write down a map $C(B^\Box(\graf'))\to \starreduced{\graf}$ and check by hand that it is a quasi-isomorphism for sufficiently fine subdivisions---see \cite[\textsection 4.2--4.3]{AnDrummond-ColeKnudsen:SSGBG}.

Now, for $\Xigraph$ a disjoint union of star graphs and edges, define $\starreduced{\Xigraph}$ to be the tensor product over the connected components of $\Xigraph$ of the corresponding subcomplexes defined above. 
Then there is an isomorphism 
\[
S(\graf)\cong  \starreduced{\coprod_{v\in V}\stargraph{d(v)}}\bigotimes_{\starreduced{\left(\coprod_{e\in E} \intervalgraph\right)\times_E H}}\starreduced{\coprod_{e\in E} \intervalgraph}.
\]

We now introduce a device that aids in piecing together these local identifications.
Recall that each edge of $\graf$ is identified with $(0,5)$ via its parametrization. We define a map $\pi:\graf\to [0,1]$ by setting \[\pi(t)=\begin{cases}
t-1&\quad t\in [1,2]\subseteq e\\
1&\quad t\in [2,3]\subseteq e\\
4-t&\quad t\in [3,4]\subseteq e\\
0&\quad \text{otherwise},\end{cases}\] and by sending every vertex of $\graf$ to $0$.

\begin{definition}
\label{definition: gap}
A \emph{gap} in $\graf$ is a subspace of the form $A=\pi^{-1}(A_0)$, where $A_0\subseteq [0,1]$ is a nonempty open subset such that \begin{enumerate}
\item the complement $[0,1]\setminus A_0$ is a (possibly empty) finite union of closed intervals of positive length, and
\item if $i\in \{0, 1\}$ lies in the closure of $A_0$, then $i\in A_0$.
\end{enumerate}
See Figure~\ref{figure: gap}.
We write $\gaps$ for the poset of gaps. 
\end{definition}

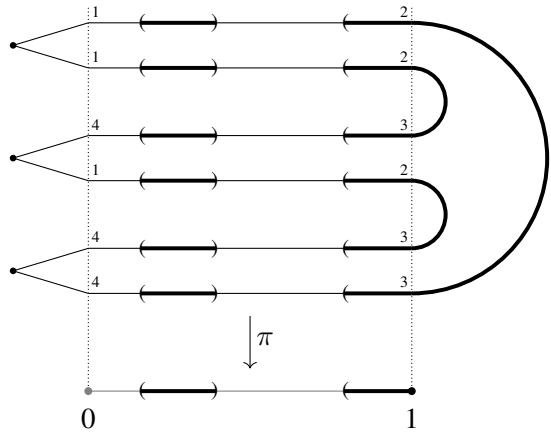
\begin{figure}[ht]
\begin{center}
\begin{tikzpicture}
\begin{scope}[xshift=.3cm]
\draw[line width=1.5] (5,-1.2) arc [start angle=-90, end angle=90, radius=.45];
\draw[line width=1.5] (5,-2.7) arc [start angle=-90, end angle=90, radius=.45];
\draw[line width=1.5] (5,-3.3) arc [start angle=-90, end angle=90, radius=1.8];
\draw[densely dotted] (.7,.5)--(.7,-4.6);
\draw[densely dotted] (5,.5)--(5,-4.6);
\end{scope}
\fill[black] (0,0) circle (1.25pt);
\draw(0,0) -- (1,.3);
\draw(0,0) -- (1,-.3);
\draw(1,.3)--(5.3,.3);
\draw(1,-.3)--(5.3,-.3);
\draw[line width=1.5](1.7,-.3)--(2.7,-.3);
\draw[line width=1.5](4.4,-.3)--(5.3,-.3);
\draw[line width=1.5](1.7,.3)--(2.7,.3);
\draw[line width=1.5](4.4,.3)--(5.3,.3);
\draw(1.1,.45) node{$\scriptscriptstyle 1$};
\draw(1.1,-.15) node{$\scriptscriptstyle 1$};
\draw(5.2,.45) node{$\scriptscriptstyle 2$};
\draw(5.2,-.15) node{$\scriptscriptstyle 2$};
\draw(1.72,-.3)node{$\scriptstyle($};
\draw(2.68,-.3)node{$\scriptstyle)$};
\draw(4.42,-.3)node{$\scriptstyle($};
\draw(1.72,.3)node{$\scriptstyle($};
\draw(2.68,.3)node{$\scriptstyle)$};
\draw(4.42,.3)node{$\scriptstyle($};
\begin{scope}[yshift=-1.5cm]
\fill[black] (0,0) circle (1.25pt);
\draw(0,0) -- (1,.3);
\draw(0,0) -- (1,-.3);
\draw(1,.3)--(5.3,.3);
\draw(1,-.3)--(5.3,-.3);
\draw[line width=1.5](1.7,-.3)--(2.7,-.3);
\draw[line width=1.5](4.4,-.3)--(5.3,-.3);
\draw[line width=1.5](1.7,.3)--(2.7,.3);
\draw[line width=1.5](4.4,.3)--(5.3,.3);
\draw(1.1,.45) node{$\scriptscriptstyle 4$};
\draw(1.1,-.15) node{$\scriptscriptstyle 1$};
\draw(5.2,.45) node{$\scriptscriptstyle 3$};
\draw(5.2,-.15) node{$\scriptscriptstyle 2$};
\draw(1.72,-.3)node{$\scriptstyle($};
\draw(2.68,-.3)node{$\scriptstyle)$};
\draw(4.42,-.3)node{$\scriptstyle($};
\draw(1.72,.3)node{$\scriptstyle($};
\draw(2.68,.3)node{$\scriptstyle)$};
\draw(4.42,.3)node{$\scriptstyle($};
\end{scope}
\begin{scope}[yshift=-3cm]
\fill[black] (0,0) circle (1.25pt);
\draw(0,0) -- (1,.3);
\draw(0,0) -- (1,-.3);
\draw(1,.3)--(5.3,.3);
\draw(1,-.3)--(5.3,-.3);
\draw[line width=1.5](1.7,-.3)--(2.7,-.3);
\draw[line width=1.5](4.4,-.3)--(5.3,-.3);
\draw[line width=1.5](1.7,.3)--(2.7,.3);
\draw[line width=1.5](4.4,.3)--(5.3,.3);
\draw(1.1,.45) node{$\scriptscriptstyle 4$};
\draw(1.1,-.15) node{$\scriptscriptstyle 4$};
\draw(5.2,.45) node{$\scriptscriptstyle 3$};
\draw(5.2,-.15) node{$\scriptscriptstyle 3$};
\draw(1.72,-.3)node{$\scriptstyle($};
\draw(2.68,-.3)node{$\scriptstyle)$};
\draw(4.42,-.3)node{$\scriptstyle($};
\draw(1.72,.3)node{$\scriptstyle($};
\draw(2.68,.3)node{$\scriptstyle)$};
\draw(4.42,.3)node{$\scriptstyle($};
\end{scope}
\begin{scope}[yshift=-1.8cm]
\draw [->](3.15,-1.8)--(3.15,-2.5);
\draw(3.1,-2.1)node[right]{$\pi$};
\draw[gray](1,-2.8)--(5.3,-2.8);
\draw(1.72,-2.8)node{$\scriptstyle($};
\draw(2.68,-2.8)node{$\scriptstyle)$};
\draw(4.42,-2.8)node{$\scriptstyle($};
\draw[line width=1.5]
(1.7,-2.8)--(2.7,-2.8);
\draw[line width=1.5](4.4,-2.8)--(5.3,-2.8);
\draw(1,-2.9) node[below]{$0$};
\draw(5.3,-2.9) node[below]{$1$};
\fill[gray] (1,-2.8) circle (1.5pt);
\fill (5.3,-2.8) circle (1.5pt);
\end{scope}
\end{tikzpicture}
\end{center}
\caption{A gap in the complete graph $\completegraph{3}$}
\label{figure: gap}
\end{figure}
Since the complement of a gap is a disjoint union of stars and intervals, we have a map $C(B^\Box(\graf'\setminus A))\to \starreduced{\graf\setminus A}$ whenever $A$ is a gap and $\graf'$ is a subdivision in which $A$ is a union of cells.
In this way, we obtain the following zig-zag of quasi-isomorphisms:
\[
\begin{tikzcd}[row sep=1.2em]
\displaystyle\colim_{\graf'\in\subd} C^\sing(B^\Box(\graf'))\ar[r,"\sim"]&C^\sing(B(\graf))\\
\bullet\ar[u,"\wr"']\ar[d,"\wr"]\\
\displaystyle\colim_{\graf'\in\subd} C(B^\Box(\graf'))\\
\displaystyle\hocolim_{A\in \gaps^{op}}\colim_{\graf'\in\subd_A} C(B^\Box(\graf'\setminus A))\ar[d,"\wr"]\ar[u,"\wr"']\\
\displaystyle\hocolim_{A\in \gaps^{op}}\starreduced{\graf\setminus A}\ar[r,"\sim"]&S(\graf).
\end{tikzcd}
\] 
Here, $\subd_A\subseteq \subd$ contains only those subdivisions of $\graf$ in which $A$ is a union of cells.
For details on why these maps are quasi-isomorphisms and why the resulting isomorphism on homology is functorial, see \cite[\textsection 4]{AnDrummond-ColeKnudsen:SSGBG}.

\begin{theorem}[{\cite[Theorem~4.5]{AnDrummond-ColeKnudsen:SSGBG}}]\label{thm:ADCK}
There is a natural isomorphism \[H_*(B(\graf))\cong H_*(\intrinsic{\graf})\] of functors from $\Gph$ to bigraded Abelian groups.
\end{theorem}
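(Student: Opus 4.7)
The plan is to verify that the displayed zig-zag of maps consists of quasi-isomorphisms that are natural in the graph; applying homology then yields the asserted natural isomorphism. The top arrow $\colim_{\graf'\in\subd}C^\sing(B^\Box(\graf'))\to C^\sing(B(\graf))$ is a quasi-isomorphism because $\subd$ is filtered (so singular homology commutes with the colimit) and Abrams' Theorem \ref{thm:abrams} realizes $B_k^\Box(\graf')\hookrightarrow B_k(\graf)$ as a deformation retract whenever $\graf'$ is sufficiently subdivided for $k$. The two vertical arrows through $\bullet$ are instances of the natural CW comparison zig-zag $C^\sing\xleftarrow{\sim}\bullet\xrightarrow{\sim}C$, applied termwise in the filtered colimit.

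For the middle vertical arrow, the key observation is that $\gaps$ is filtered under reverse inclusion: any two gaps admit a common refinement obtained by intersecting their bases in $[0,1]$. For each fixed subdivision $\graf'\in\subd_A$, shrinking $A$ eventually recovers the full configuration complex of $\graf'$, so the $\hocolim$ over $\gaps^{op}$ collapses to the ordinary colim. The bottom arrow $\hocolim_{A\in\gaps^{op}}\starreduced{\graf\setminus A}\to S(\graf)$ is assembled from the tensor factorization of $S(\graf)$ recorded in Section \ref{section:functorial model}: each complement $\graf\setminus A$ is a disjoint union of stars near the vertices and intervals along the edges, and the colimit over $\gaps^{op}$ glues these local pieces over the fibered product of half-edges, producing precisely the presentation of $S(\graf)$ as a tensor product of local \'{S}wi\k{a}tkowski contributions.

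The main obstacle is the atomic quasi-isomorphism $C(B^\Box(\graf'))\to\starreduced{\graf}$ for $\graf$ a single star $\stargraph{n}$ or a single edge and for $\graf'$ a sufficiently fine subdivision. Here one verifies by a direct combinatorial inspection that the map is a quasi-isomorphism on each weight component, using the known elementary homotopy types of $B_k(\stargraph{n})$ and $B_k(\intervalgraph)$; a K\"{u}nneth argument extends this to arbitrary disjoint unions of stars and intervals. For naturality on $\Gph$, a graph morphism $f:\graf_1\to\graf_2$ refines subdivisions (by pulling back cell structures along $f$) and transports gaps compatibly with the functorial construction of $S(f)$ from Section \ref{section:swiatkowski}; each arrow of the zig-zag respects these morphisms, so inverting the zig-zag in homology produces a natural isomorphism of functors from $\Gph$ to bigraded abelian groups.
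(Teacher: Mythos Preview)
This theorem is not proved in the present paper; it is quoted from \cite[Theorem~4.5]{AnDrummond-ColeKnudsen:SSGBG}, and the surrounding text only records the zig-zag and refers the reader to \cite[\S4]{AnDrummond-ColeKnudsen:SSGBG} for the verification that each arrow is a quasi-isomorphism and that the resulting identification is natural. So there is no in-paper argument to compare against beyond that outline; what one can assess is whether your reconstruction of the cited proof is sound.

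Your sketch is broadly aligned with that outline, but the step handling the middle vertical arrow contains a genuine error. You assert that $\gaps$ is filtered under reverse inclusion, i.e., that $\gaps^{op}$ is filtered, with the common refinement given by intersecting bases in $[0,1]$. This is false: take $A_{0,1}=(0.1,0.2)$ and $A_{0,2}=(0.8,0.9)$. Both satisfy the conditions of Definition~\ref{definition: gap}, but their intersection is empty, so there is no gap contained in both. Consequently your reduction of the homotopy colimit over $\gaps^{op}$ to an ordinary filtered colimit does not go through, and the sentence ``shrinking $A$ eventually recovers the full configuration complex of $\graf'$'' has no formal content in this indexing category. The paper's own use of $\gaps$ in Section~\ref{section:long ends and proof}---where only \emph{homotopy} initiality of a subposet is invoked---already signals that $\gaps^{op}$ is not filtered and that the comparison requires a genuine homotopy-colimit argument (of \v{C}ech/descent or cofinality type for a cover of $\graf$ by complements of gaps), not a filtered-colimit collapse.

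A second, smaller point: your naturality paragraph is too breezy. A graph morphism need not be cellular for any given pair of subdivisions, so ``pulling back cell structures along $f$'' is not literally available; the cited paper arranges compatibility of subdivisions and gaps more carefully. The atomic step and the bottom arrow are described correctly in spirit, but without a valid argument for the middle arrow the zig-zag is not yet shown to consist of quasi-isomorphisms.
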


\subsection{Cubical model} 
We now recall the cubical model of $B(\graf)$ introduced in \cite{Swiatkowski:EHDCSG} and corrected in~\cite[\textsection 2.1]{ChettihLuetgehetmann:HCSTL}. Given a graph $\graf$, we write $A(\graf)$ for the set of labelings \[\lambda: E\sqcup V\to \mathbb{Z}_{\geq0}\sqcup V\sqcup H\sqcup \{\varnothing\}\] such that $\lambda(e)\in \mathbb{Z}_{\geq0}$ and $\lambda(v)\in \{\varnothing, v\}\sqcup H(v)$.

\begin{construction}[\'{S}wi\k{a}tkowski]\label{construction:cubical model}
Define a space $UK(\graf)$ as the quotient \[UK(\graf)=\faktor{\coprod_{\lambda\in  A(\graf)}\{\lambda\}\times [0,1]^{\lambda^{-1}(H)}}{\sim},\] 
where the equivalence relation is determined as follows. 
Fix a half-edge $h$ with edge $e$ and vertex $v$. Suppose $\lambda$, $\lambda_{0}$, and $\lambda_{1}$ are labellings which agree except on $v$ and $e$, where 
\begin{align*}
\lambda(v)&=h 
&
\lambda_{0}(v)&=v 
&\lambda_{1}(v)&=\varnothing 
\\
\lambda(e)&=n
& \lambda_{0}(e)&=n
&
\lambda_{1}(e)&=n+1.
\end{align*} 
Then we glue according to the following identifications for $\epsilon\in \{0,1\}$:
\begin{align*}
\{\lambda_{\epsilon}\}\times[0,1]^{\lambda_{\epsilon}^{-1}(H)}
&\cong 
\{\lambda\}\times [0,1]^{\lambda^{-1}(H\setminus \{h\})}\times\{\epsilon\}
\end{align*} 
as a subset of $\{\lambda\}\times [0,1]^{\lambda^{-1}(H)}$.
\end{construction}

To specify a point in $UK(\graf)$, it is enough to give a labeling $\lambda\in A(\graf)$ together with numbers $t(h)\in[0,1]$ for each $h\in \lambda^{-1}(H)$.

We endow the set $A(\graf)$ with a bigrading by declaring that 
\[|\lambda|=\left(|\lambda^{-1}(H)|,\, \sum_{E}\lambda(e)+|\lambda^{-1}(V)|+|\lambda^{-1}(H)|
\right)\] 
and note that $UK(\graf)$ splits as a disjoint union of cell complexes $UK_k(\graf)$ whose $i$-cells are those in bigrading $(i,k)$.

\begin{observation}\label{obs:swiatkowski chains}
By inspection, the map \begin{align*}
C(UK(\graf))&\to S(\graf)&
\lambda&\mapsto \left(\prod_{E}e^{\lambda(e)}\right)\otimes\bigotimes_V\lambda(v)
\end{align*} is a bigraded chain isomorphism. 
\end{observation}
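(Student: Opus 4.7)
The plan is to verify two assertions: that the stated map is a bijection of bigraded abelian groups, and that it is a chain map. Since the underlying set of generators on both sides is the same, the proof is essentially a bookkeeping check.

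First I would check bases and bigradings. The $i$-cells of $UK_k(\graf)$ are indexed by those $\lambda \in A(\graf)$ with $|\lambda^{-1}(H)| = i$ and $|\lambda| = (i,k)$; the bigraded abelian group $S(\graf)$ has a basis of monomials $\prod_E e^{n_e} \otimes \bigotimes_V s_v$ with $s_v \in \{\varnothing, v\}\cup H(v)$, which is also in bijection with $A(\graf)$ via $n_e = \lambda(e)$ and $s_v = \lambda(v)$. The bidegree $(|\lambda^{-1}(H)|, \sum_E \lambda(e) + |\lambda^{-1}(V)| + |\lambda^{-1}(H)|)$ agrees on both sides because in $S(\graf)$ each half-edge factor contributes $(1,1)$, each vertex factor $(0,1)$, and each power of $e$ contributes $(0,1)$ per factor. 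Hence the map is a bigraded isomorphism of the underlying graded abelian groups.

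Next I would check compatibility with differentials. Fix a labeling $\lambda$ and order the elements of $\lambda^{-1}(H)$. The cellular boundary of the cube $\{\lambda\}\times [0,1]^{\lambda^{-1}(H)}$ is the signed sum, over $h\in \lambda^{-1}(H)$ and $\epsilon \in \{0,1\}$, of its faces at $t(h)=\epsilon$. By the gluing data of Construction \ref{construction:cubical model}, the $\epsilon = 0$ face is the cell $\lambda_0$ which differs from $\lambda$ only in that $\lambda_0(v(h))=v(h)$, while the $\epsilon=1$ face is $\lambda_1$ with $\lambda_1(v(h))=\varnothing$ and $\lambda_1(e(h))=\lambda(e(h))+1$. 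Under the map, $\lambda_0$ is sent to the monomial obtained from the image of $\lambda$ by replacing the factor $h\in S(v(h))$ by $v(h)$, and $\lambda_1$ is sent to the monomial obtained by replacing $h$ by $\varnothing$ and multiplying by $e(h)$. This is precisely the result of applying the \'{S}wi\k{a}tkowski differential $\partial(h) = e(h) - v(h)$ to the $h$ factor in the image of $\lambda$.

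The only real subtlety to watch is sign consistency: the cubical differential places a sign $(-1)^j$ on the face perpendicular to the $j$-th coordinate axis, while the \'{S}wi\k{a}tkowski differential (applied to a tensor of half-edge generators in a fixed order) produces Koszul signs. Once the same ordering of $\lambda^{-1}(H)$ is used to index both the coordinates of the cube and the tensor factors of half-edge generators, the two sign conventions coincide term-by-term, and the verification is complete.
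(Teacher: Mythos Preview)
Your proposal is correct and follows exactly the approach the paper intends: the paper offers no proof beyond the phrase ``by inspection'' together with the subsequent remark on orientations, and your write-up is precisely the explicit unpacking of that inspection---matching bases and bigradings, identifying the cubical faces with the two terms of $\partial(h)=e(h)-v(h)$, and noting that a common ordering of $\lambda^{-1}(H)$ aligns the cubical and Koszul signs.
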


\begin{remark}
To make Observation~\ref{obs:swiatkowski chains} precise, we should take care with orientations. 
One way to specify an orientation on a cube which is a product of intervals indexed by a set $J$ is to give an order on $J$ up to even permutation. 
One way to specify the correct sign on a tensor product of odd degree vector spaces indexed by a set $J$ is to give an order on $J$ up to even permutation.
Then, with appropriate conventions as to which is the ``positive'' and which is the ``negative'' end of an interval, the map of the observation intertwines these conventions.
\end{remark}
\begin{remark}
In previous work along these lines~\cite{ChettihLuetgehetmann:HCSTL,Luetgehetmann:CSG,Swiatkowski:EHDCSG}, neither $2$-valent nor $1$-valent vertices were considered, but there is no obstruction to this mild generalization.
\end{remark}

We now show that the complex $UK(\graf)$ is homotopy equivalent to the configuration space of interest.
Apart from slight modifications for simplicity and compatibility with our setup, this argument is essentially that of~\cite[\textsection 2.1]{ChettihLuetgehetmann:HCSTL}.%
\footnote{%
As observed by L\"utgehetmann, the map defined by \'Swi\k{a}tkowski is not quite a retraction \cite[p. 24]{Luetgehetmann:CSG}.
Unfortunately, the replacement retraction constructed by L\"utgehetmann is not continuous, as one can see by comparing the formulas for $t_x(s)$ appearing on p.~22 for $k=2$ and $k=3$ in a situation in which the first of three particles in an edge approaches the initial endpoint of that edge.
Chettih--L\"utgehetmann give a continuous retraction.
}
In particular, our homotopy equivalence will be homotopic to the one considered in that work.

In order to compare $B(\graf)$ and $UK(\graf)$, we first deform $B(\graf)$ onto the subspace $\widetilde B(\graf)$ of configurations $x$ with the property that, for each vertex $v$, at most one coordinate lies in the open star $\st{1}(v)$ of radius 1 at $v$.
This deformation is achieved by radial expansion in each star simultaneously.
Having taken this intermediate step, we define our comparison map $\rho$ as the composite \[\rho:B(\graf)\to \widetilde B(\graf)\to UK(\graf),\] where the second map records the presence or absence of particles at vertices, the coordinate of any particle in $\st{1}(v)\setminus \{v\}\cong H(v)\times(0,1)$, and the number of particles lying in the subinterval $[1,4]$ of each edge.
The details of the deformation are given below in Construction \ref{construction:deformation}; for now, we state the following result concerning $\rho$.

\begin{proposition}\label{prop:rho homotopy equivalence}
The map $\rho$ is a homotopy equivalence.
\end{proposition}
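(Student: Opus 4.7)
Since the first leg of the composite $\rho$ is, by construction, a deformation retraction onto a subspace of $B(\graf)$, it is a homotopy equivalence, so it suffices to show that the second leg $\rho':\widetilde B(\graf)\to UK(\graf)$ is a homotopy equivalence. The plan is to exhibit an explicit section $\iota: UK(\graf)\to\widetilde B(\graf)$ of $\rho'$ together with a homotopy $\iota\circ\rho'\simeq\id$, following the strategy of Chettih--L\"utgehetmann~\cite{ChettihLuetgehetmann:HCSTL}.

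Given $(\lambda,(t(h))_{h\in\lambda^{-1}(H)})\in UK(\graf)$, I would define $\iota(\lambda,(t(h)))$ by placing a particle at each $v$ with $\lambda(v)=v$; a particle at distance $t(h)$ from $v$ in the direction of $h$ for each $v$ with $\lambda(v)=h$; and, for each edge $e$, the prescribed number $\lambda(e)$ of particles at canonical positions in the subinterval $[1,4]\subseteq e$. By construction, the composite $\rho'\circ\iota$ is the identity on $UK(\graf)$, so only the continuity of $\iota$ and the homotopy in the other direction remain.

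The main obstacle is to define the canonical positions on $[1,4]$ so that $\iota$ is continuous with respect to the gluings of Construction~\ref{construction:cubical model}: as a cube coordinate $t(h)\to 1$, the particle at distance $t(h)$ from $v$ in direction $h$ reaches position $1$ on $e(h)$, the label $\lambda(e(h))$ jumps from $n$ to $n+1$, and the positions of the remaining $n$ particles on $e(h)$ must rearrange continuously to accommodate the arriving particle. This can be arranged by choosing the canonical positions as a continuous function of $\lambda(e)$ and the cube coordinates at the two endpoints of $e$, so that a particle entering from the endpoint corresponding to $h$ enters at position $1$ while the others interpolate to the new canonical positions.

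Once such an $\iota$ is in place, the homotopy $\iota\circ\rho'\simeq\id_{\widetilde B(\graf)}$ is produced by a fiberwise straight-line homotopy that slides each particle lying in some $[1,4]\subseteq e$ linearly to its target canonical position while fixing particles at vertices and in vertex stars. Since this deformation preserves the relative order of particles on each edge, no collisions occur and the homotopy remains within $\widetilde B(\graf)$, completing the argument.
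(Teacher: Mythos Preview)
Your approach is essentially the same as the paper's: exhibit a section of $\rho$ (or of its second leg) and then use edgewise straight-line homotopies to show the other composite is homotopic to the identity. The paper, however, does not reconstruct the section $\iota$ by hand. Instead it invokes the result of Chettih--L\"utgehetmann that $UK(\graf)$ arises as the quotient of an equivariant deformation retract of the \emph{ordered} configuration space, obtaining the section as the inclusion of that retract, and then checks only that $\rho$ composed with this inclusion differs from the identity solely in edge-internal particle positions, whence the straight-line homotopy.

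The place in your argument that warrants caution is precisely the sentence ``this can be arranged.'' Making the canonical positions on $[1,4]$ depend continuously on the cube coordinates so that incoming particles merge correctly under the face identifications of Construction~\ref{construction:cubical model} is the entire technical content here, and the paper's footnote records that both \'Swi\k{a}tkowski's original map and L\"utgehetmann's replacement fail at exactly this step (one is not a retraction, the other not continuous). So while what you say is true---Chettih--L\"utgehetmann do carry this out---your sketch hides the only genuinely delicate point. Either cite their construction for the formula, or give one explicitly; the paper chooses the former route and thereby avoids re-proving the hard part.
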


\begin{construction}\label{construction:deformation}
We define the deformation of $B(\graf)$ onto $\widetilde B(\graf)$ as follows.
For each vertex $v$, we have $\overline{\st{2}(v)}\setminus\{v\}\cong H(v)\times(0,2]$.
On each of these intervals, for a fixed configuration $x$, we use the homotopy \[(s,t)\mapsto 2\left(\frac{1}{2}s \right)^{e^{-t}}
\qquad\qquad\vcenter{\hbox{\def\svgscale{0.4}
\begingroup%
  \makeatletter%
  \providecommand\color[2][]{%
    \errmessage{(Inkscape) Color is used for the text in Inkscape, but the package 'color.sty' is not loaded}%
    \renewcommand\color[2][]{}%
  }%
  \providecommand\transparent[1]{%
    \errmessage{(Inkscape) Transparency is used (non-zero) for the text in Inkscape, but the package 'transparent.sty' is not loaded}%
    \renewcommand\transparent[1]{}%
  }%
  \providecommand\rotatebox[2]{#2}%
  \ifx\svgwidth\undefined%
    \setlength{\unitlength}{294.96638489bp}%
    \ifx\svgscale\undefined%
      \relax%
    \else%
      \setlength{\unitlength}{\unitlength * \real{\svgscale}}%
    \fi%
  \else%
    \setlength{\unitlength}{\svgwidth}%
  \fi%
  \global\let\svgwidth\undefined%
  \global\let\svgscale\undefined%
  \makeatother%
  \begin{picture}(1,1.01291436)%
    \put(0,0){\includegraphics[width=\unitlength,page=1]{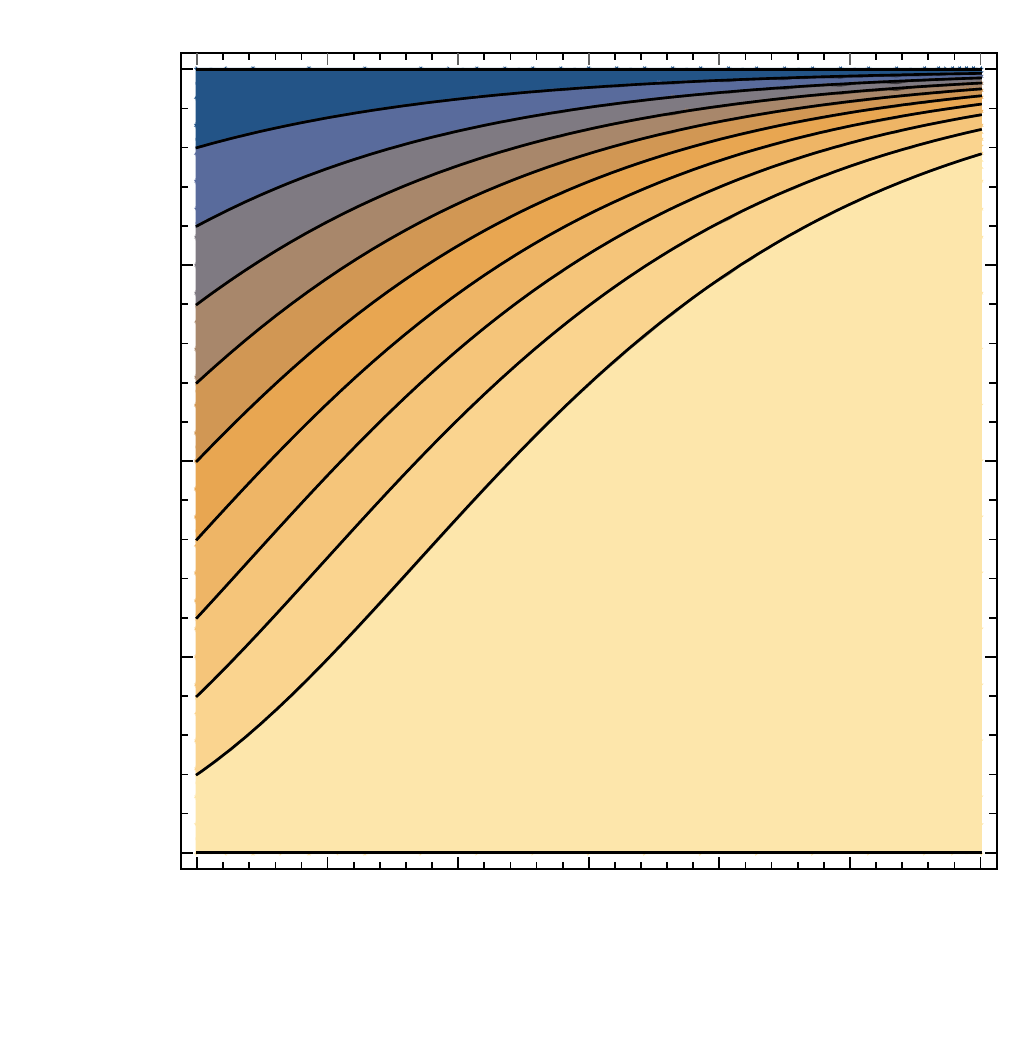}}%
    \put(0.11880681,0.17021538){\color[rgb]{0,0,0}\makebox(0,0)[lb]{\smash{$0$}}}%
    \put(0.12006638,0.55272719){\color[rgb]{0,0,0}\makebox(0,0)[lb]{\smash{$1$}}}%
    \put(0.1150281,0.93522577){\color[rgb]{0,0,0}\makebox(0,0)[lb]{\smash{$2$}}}%
    \put(0.1599016,0.08905058){\color[rgb]{0,0,0}\makebox(0,0)[lb]{\smash{$0$}}}%
    \put(0.41490947,0.08905058){\color[rgb]{0,0,0}\makebox(0,0)[lb]{\smash{$1$}}}%
    \put(0.6699041,0.08905058){\color[rgb]{0,0,0}\makebox(0,0)[lb]{\smash{$2$}}}%
    \put(0.92491197,0.08905058){\color[rgb]{0,0,0}\makebox(0,0)[lb]{\smash{$3$}}}%
    \put(0.01014748,0.55272719){\color[rgb]{0,0,0}\makebox(0,0)[lb]{\smash{$s$}}}%
    \put(0.54651872,0.0186892){\color[rgb]{0,0,0}\makebox(0,0)[lb]{\smash{$t$}}}%
  \end{picture}%
\endgroup%
}}
\] on all points of $x\cap( \{h\}\times(0,2])$ simultaneously.
Here, $s\in(0,2]$ and $t\in[0,t(x, v)]$, where $t(x,v)$ is the least $t$ such that the resulting configuration has at most one particle in $\st{1}(v)$.%
\footnote{Explicitly, $t(x,v)$ is $0$ if there are fewer than $2$ points in $\st{2}(v)$ and otherwise $\log(1-\log_2(s_v))$ where $s_v$ is the distance from $v$ to the second closest point in $\st{2}(v)$ in $x$.}
Since $t(x,v)$ is continuous in $x$, and since $t(x,v)=0$ for $x\in \widetilde B(\graf)$, this prescription defines a deformation retraction.
\end{construction}

\begin{proof}[Proof of Proposition \ref{prop:rho homotopy equivalence}]
Chettih--L\"utgehetmann~\cite[\textsection 2.1]{ChettihLuetgehetmann:HCSTL} define a cube complex $\widetilde{UK}_k(\graf)$ which is a deformation retract of the \emph{ordered} configuration space of $k$ points in $\graf$. 
Their cube complex is a $\Sigma_k$-cover of the weight $k$ subcomplex of $UK(\graf)$.
Their deformation retraction is equivariant and so passes to a deformation retraction of $B(\graf)$ onto $UK(\graf)$.
The composite $B(\graf)\xrightarrow{\rho} UK(\graf)\to B(\graf)$ with the inclusion of this quotient deformation retract differs from the identity only in the positions of particles in individual open edges.
Therefore, the two maps are homotopic by edgewise straight line homotopies. 
It follows that $\rho$ is a one-sided homotopy inverse to a homotopy equivalence and hence itself a homotopy equivalence.
\end{proof}

\subsection{Comparison of models} 
We have two isomorphisms $H_*(B(\graf))\cong H_*(S(\graf))$.
The isomorphism of Theorem~\ref{thm:ADCK} is an isomorphism of functors on the category $\Gph$. 
This naturality is a powerful tool in applications \cite[\textsection 5]{AnDrummond-ColeKnudsen:SSGBG}.
On the other hand, the isomorphism obtained by combining Proposition \ref{prop:rho homotopy equivalence} and Observation \ref{obs:swiatkowski chains} is more geometric in nature.
Fortunately, we need not choose between these virtues.

\begin{proposition}\label{prop:swiatkowski comparison}
The diagram of isomorphisms 
\[\begin{tikzcd}H_*(B(\graf))\ar[dr,"\text{\rm Theorem }(\ref{thm:ADCK})\ "',"\simeq"]\ar[rr,"\text{\rm Proposition }(\ref{prop:rho homotopy equivalence})","\simeq"']&&H_*(UK(\graf))\ar[dl,"\ \text{\rm Observation }(\ref{obs:swiatkowski chains})","\simeq"']\\&H_*(S(\graf))\end{tikzcd}\]
commutes.
\end{proposition}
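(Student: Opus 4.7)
The plan is to construct a single cell-level comparison that implements both isomorphisms simultaneously. First, I would fix a subdivision $\graf^\star$ of $\graf$ obtained by marking the points $t = 1, 2, 3, 4$ on each edge under the parametrization of Section~\ref{section:graph conventions}, and realize $UK(\graf)$ as a cubical subcomplex $K \subseteq B^\Box(\graf^\star)$ of the Abrams complex. Explicitly, a cube labelled by $\lambda \in A(\graf)$ is sent to the cube of $B^\Box(\graf^\star)$ in which particles with $\lambda(v) = v$ are stationed at $v$, particles with $\lambda(v) = h$ slide along the subedge at $v$ in the direction of $h$, and $\lambda(e)$ stationary particles are placed in the canonical way in the middle subedges of $e$. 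The image of $K$ under $\graf^\star \cong \graf$ is precisely the image of the retraction $\rho$.

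Second, I would verify on generators that, under this identification, the composite
\[ C(UK(\graf)) \hookrightarrow C(B^\Box(\graf^\star)) \to \starreduced{\graf} \to S(\graf) \]
arising from the functorial framework of Section~\ref{section:functorial model} agrees with the cubical chain isomorphism of Observation~\ref{obs:swiatkowski chains}. This is a combinatorial check: both maps send a $\lambda$-cube to $\prod_E e^{\lambda(e)} \otimes \bigotimes_V \lambda(v)$, with the same sign convention determined by ordering the half-edges of $\lambda^{-1}(H)$.

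Third, I would show that the inclusion $K \hookrightarrow B(\graf)$ is a one-sided homotopy inverse to $\rho$. Since Chettih--L\"utgehetmann's retraction of $B(\graf)$ onto the cubical subcomplex $K$ factors through $\rho$ (as indicated in the proof of Proposition~\ref{prop:rho homotopy equivalence}), the composite $K \hookrightarrow B(\graf) \xrightarrow{\rho} K$ differs from the identity only by edgewise straight-line adjustments within open edges and stars, hence is homotopic to the identity. Therefore the homology isomorphism induced by $\rho$ is inverse to the one induced by the inclusion $K \hookrightarrow B(\graf)$.

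To glue the two sides together, I would restrict the homotopy colimit $\hocolim_{A \in \gaps^{op}} \starreduced{\graf \setminus A}$ to a cofinal subsystem containing the distinguished gap $A^\star$ whose complement consists of the closed middle subedges of each edge together with the closed stars of radius 1 at each vertex, and restrict the colimit over $\subd$ to subdivisions refining $\graf^\star$. At $A^\star$, every map in both zig-zags factors through $C(K)$, and the two comparisons reduce to the same combinatorial formula from Observation~\ref{obs:swiatkowski chains}. The main obstacle is the bookkeeping required to reconcile the various colimits and the orientation/sign conventions on cubes with the Chettih--L\"utgehetmann retraction; however, once the identification $K \subseteq B^\Box(\graf^\star)$ is in place, all intervening maps are either inclusions of subcomplexes, canonical collapse maps defined on generators, or the natural zig-zag between singular and cellular chains, so the commutativity follows by inspection at the chain level and hence at the level of homology.
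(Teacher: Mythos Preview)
Your approach has a genuine gap at the very first step. You propose to realize $UK(\graf)$ as a cubical subcomplex $K \subseteq B^\Box(\graf^\star)$ for the \emph{fixed} subdivision $\graf^\star$ obtained by marking $t = 1, 2, 3, 4$ on each edge. This cannot work: a labeling $\lambda \in A(\graf)$ may assign $\lambda(e)$ an arbitrarily large non-negative integer, so $UK_k(\graf)$ contains cells in which many particles occupy a single edge. But a cell of the Abrams complex $B^\Box(\graf^\star)$ is specified by a tuple of cells of $\graf^\star$ with pairwise disjoint closures, and your fixed subdivision places only nine cells (five subedges and four interior vertices) on each original edge. There is no way to station, say, ten particles ``in the canonical way in the middle subedges of $e$'' when $\graf^\star$ provides only three middle subedges. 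The embedding $K \hookrightarrow B^\Box(\graf^\star)$ you describe simply does not exist once the weight is large, and everything downstream in your outline (the chain-level check, the cofinal restriction to a single gap $A^\star$) depends on this nonexistent embedding.

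The paper resolves this by working not with a single subdivision but with the filtered category $\longends$ of all subdivisions with \emph{long ends}---those in which the outermost subdivision points on each edge sit at $t=1$ and $t=4$, with arbitrarily many further subdivision points in between. For any individual weight $k$, a sufficiently fine long-ends subdivision $\graf'$ admits a \emph{cellular} map $B_k^\Box(\graf') \to UK_k(\graf)$ induced by $\rho$ (Lemma~\ref{lem:cellularity}); note that the direction of this map is opposite to your proposed inclusion. One then checks that the colimit over $\longends$ agrees with the colimit over all subdivisions (Lemmas~\ref{lem:long ends comparison} and~\ref{lem:gap comparison}) and is compatible with the local-to-global assembly via gaps (Lemma~\ref{lem:multi-basic compatibility}). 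Your instinct to compare at the chain level using the explicit formula of Observation~\ref{obs:swiatkowski chains} is sound, and Lemma~\ref{lem:multi-basic compatibility} is exactly that check---but it must be carried out in the colimit, not at a single finite stage.
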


The proof of this result will occupy Section \ref{section:long ends and proof} below.
For now, we will use it to deduce the desired conclusion regarding edge stabilization.

For $\lambda\in A(\graf)$, write $e\lambda$ for the labeling that differs from $\lambda$ only in that $e\lambda(e)=\lambda(e)+1$ (in particular, $\lambda^{-1}(H)=(e\lambda)^{-1}(H)$).
There is a version of edge stabilization at the level of $UK(\graf)$ which sends $\lambda$ to $e\lambda$ and fixes all $t_h$ coordinates.
The induced $\mathbb{Z}[E]$-action on $C(UK(\graf))$ coincides, through the isomorphism of Observation \ref{obs:swiatkowski chains}, with the canonical action on $S(\graf)$.

\begin{proof}[Proof of Theorem \ref{thm:module iso}]
It suffices by Proposition \ref{prop:swiatkowski comparison} to show that the map \[\rho_*:H_*(B(\graf))\to H_*(\widetilde B(\graf))\to H_*(UK(\graf))\] is $\mathbb{Z}[E]$-linear.
By inspection, we have the commuting diagram 
\[\begin{tikzcd}
\widetilde B(\graf)\ar[d,dashed,"\sigma_e"']\ar[r]&UK(\graf)\ar[d]\\
\widetilde B(\graf)\ar[r]&UK(\graf),
\end{tikzcd}\] so the second map in this composite is $\mathbb{Z}[E]$-linear.
In order to show that the first map is also $\mathbb{Z}[E]$-linear, we note that the inverse, which is induced by the inclusion $\widetilde B(\graf)\subseteq B(\graf)$, is $\mathbb{Z}[E]$-linear.
\end{proof}

We also record the following useful conclusion.

\begin{corollary}\label{cor:chain level comparison}
The differential bigraded $\mathbb{Z}[E]$-modules $C^\sing(B(\graf))$ and $S(\graf)$ are quasi-isomorphic.
\end{corollary}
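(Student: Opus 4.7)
The plan is to promote the comparison of Proposition~\ref{prop:swiatkowski comparison} from the homology level to the chain level by assembling an explicit zig-zag of $\mathbb{Z}[E]$-equivariant quasi-isomorphisms, reusing the geometric replacements already employed in the proofs of Theorem~\ref{thm:module iso} and Proposition~\ref{prop:rho homotopy equivalence}.

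First I would observe that the subspace inclusion $\widetilde B(\graf) \hookrightarrow B(\graf)$ is $\mathbb{Z}[E]$-equivariant at the topological level: the edge stabilization map $\sigma_e$ of Definition~\ref{def:edge stabilization} inserts new particles only in the subinterval $(2,3) \subseteq e$, which is disjoint from every open star $\st{1}(v)$, so $\sigma_e$ restricts to a self-map of $\widetilde B(\graf)$. Because this inclusion is also a deformation retract by Construction~\ref{construction:deformation}, passing to singular chains yields a $\mathbb{Z}[E]$-equivariant quasi-isomorphism $C^\sing(\widetilde B(\graf)) \to C^\sing(B(\graf))$. Next, the map $\widetilde B(\graf) \to UK(\graf)$ appearing as the second factor of $\rho$ is $\mathbb{Z}[E]$-equivariant on the nose, as witnessed by the commutative square in the proof of Theorem~\ref{thm:module iso}, and it is a homotopy equivalence by Proposition~\ref{prop:rho homotopy equivalence}; singular chains then provide a $\mathbb{Z}[E]$-equivariant quasi-isomorphism $C^\sing(\widetilde B(\graf)) \to C^\sing(UK(\graf))$.

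To cross from singular to cellular chains, I would use that the $UK$-level edge stabilization $\lambda \mapsto e\lambda$ is cellular by inspection, so the standard natural zig-zag $C^\sing \xleftarrow{\sim} \bullet \xrightarrow{\sim} C$ produces a zig-zag of $\mathbb{Z}[E]$-equivariant quasi-isomorphisms between $C^\sing(UK(\graf))$ and $C(UK(\graf))$. Combined with the $\mathbb{Z}[E]$-equivariant bigraded chain isomorphism $C(UK(\graf)) \cong S(\graf)$ of Observation~\ref{obs:swiatkowski chains} (the equivariance of which is recorded just above the proof of Theorem~\ref{thm:module iso}), the composite zig-zag
\[
C^\sing(B(\graf)) \xleftarrow{\sim} C^\sing(\widetilde B(\graf)) \xrightarrow{\sim} C^\sing(UK(\graf)) \xleftarrow{\sim} \bullet \xrightarrow{\sim} C(UK(\graf)) \cong S(\graf)
\]
would prove the corollary.

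The main (and essentially only) obstacle is the invariance of $\widetilde B(\graf)$ under each $\sigma_e$; every other equivariance statement has already been recorded or is immediate from the cellularity of the relevant maps. Given the choice of parametrization baked into Definition~\ref{def:edge stabilization}, this invariance check is mild, so I expect the entire argument to be short.
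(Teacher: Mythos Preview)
Your proposal is correct and is essentially identical to the paper's own proof: the paper writes down precisely the same zig-zag
\[
C^\sing(B(\graf))\xleftarrow{\sim} C^\sing(\widetilde B(\graf))\xrightarrow{\sim} C^\sing(UK(\graf))\xleftarrow{\sim}\bullet\xrightarrow{\sim} C(UK(\graf))\cong S(\graf),
\]
with the $\mathbb{Z}[E]$-linearity of each leg justified exactly as you describe (and already recorded in the proof of Theorem~\ref{thm:module iso}). Your unpacking of why $\sigma_e$ preserves $\widetilde B(\graf)$ is a helpful elaboration, but there is no substantive difference in approach.
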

\begin{proof}
We have the zig-zag of $\mathbb{Z}[E]$-linear quasi-isomorphisms
\[
C^\sing(B(\graf))\xleftarrow{\sim} C^\sing(\widetilde B(\graf))\xrightarrow{\sim} C^\sing(UK(\graf))\xleftarrow{\sim}\bullet\xrightarrow{\sim} C(UK(\graf))\cong S(\graf).
\qedhere
\]
\end{proof}

Unlike the homology isomorphism, this quasi-isomorphism is not natural, since it relies on a choice of parametrization.

\section{Long ends and the proof of Proposition \ref{prop:swiatkowski comparison}}\label{section:long ends and proof}

In this section, we compare the two isomorphisms $H_*(B(\graf))\cong H_*(S(\graf))$.
The key observation is that, for a certain class of subdivision $\graf\to \graf'$, the natural map \[B_k^\Box(\graf')\subseteq B_k(\graf')\cong B_k(\graf)\to UK_k(\graf)\] from Abrams' model to \'{S}wi\k{a}tkowski's model is cellular.
With this observation in hand, Proposition \ref{prop:swiatkowski comparison} follows after checking that this collection of special subdivisions is large enough to support the argument of \cite{AnDrummond-ColeKnudsen:SSGBG}.

\subsection{Long ends}

In this section, we fix a parametrization of a graph $\graf$, identifying each edge with $(0,5)$. 
Given a subdivision $\graf\to \graf'$, we do \emph{not} independently parametrize $\graf'$. Rather, we use the homeomorphism underlying the subdivision to identify $\graf'$ with $\graf$. 
This allows us to specify the data of the subdivision $\graf'$ up to isomorphism by naming a finite set of points in $(0,5)$ for each edge of $\graf$.

\begin{definition}
We say that a subdivision $\graf\to \graf'$ has \emph{long ends} if, for every edge of $\graf$, the induced subdivision $\{0, t_1,\ldots, t_r,5\}$ of $[0,5]$ has $t_1=1$ and $t_r=4$.
\end{definition}

If $\graf'$ has long ends, we identify a half-edge $h$ of $\graf$ with the corresponding 1-cell of length $1$ in $\graf'$.
The following observation is the heart of our comparison of the two models in question. 

\begin{lemma}\label{lem:cellularity}
If $\graf\to \graf'$ has long ends, then the composite \[B_k^\Box(\graf')\subseteq B_k(\graf)\xrightarrow{\rho_k} UK_k(\graf)\] is cellular.
\end{lemma}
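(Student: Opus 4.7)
My plan is to verify cellularity directly by bounding, for every open cell $C$ of $B_k^\Box(\graf')$, the dimensions of the $UK_k(\graf)$-cells that meet $\rho_k(C)$. Recall that a cell in $UK_k(\graf)$ is determined by a labeling $\lambda\in A(\graf)$ and has dimension $|\lambda^{-1}(H)|$, while an open $n$-cell of $B_k^\Box(\graf')$ is a $\Sigma_k$-orbit of a product of $k$ distinct open cells of $\graf'$, exactly $n$ of which are $1$-cells.

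First I would classify the open cells of $\graf'$ using the long-ends hypothesis: each is either a vertex of $\graf$; an interior subdivision vertex, necessarily lying in the closed middle interval $[1,4]_e$ of its edge $e$; an \emph{end cell}, one of the two length-one $1$-cells adjacent to the vertices of an edge (canonically identified with a half-edge $h$ of $\graf$); or an interior $1$-cell, itself contained in some $[1,4]_e$. Writing $b_1$ and $b_2$ for the numbers of end cells and interior $1$-cells in the specification of $C$, one has $\dim C = b_1+b_2$.

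Next I would trace a point of $C$ through the two stages of $\rho_k$. The deformation $B_k(\graf)\to\widetilde B_k(\graf)$ is supported on the closed stars $\overline{\st{2}(v)}$ and preserves each half-edge radially, so a particle initially in $[1,4]_e$ remains in $[1,4]_e$; such particles -- those coming from interior subdivision vertices or interior $1$-cells -- therefore contribute only to $\lambda(e)$. A particle at a vertex $v$ of $\graf$ is fixed and forces $\lambda(v)=v$. All contributions to $|\lambda^{-1}(H)|$ must then come from end cells, and at each vertex $v$ at most one such contribution can occur because $\lambda(v)$ is single-valued in $H(v)$. Counting vertices with at least one chosen end cell yields
\[
|\lambda^{-1}(H)|\le \#\{v : \text{at least one end cell at } v \text{ is chosen in } C\}\le b_1\le \dim C,
\]
so $\rho_k(C)$ lies in the $(\dim C)$-skeleton of $UK_k(\graf)$, and cellularity follows.

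The main obstacle will be verifying that the deformation, which genuinely modifies configurations when several particles share a single open star $\st{1}(v)$, really does the right thing: although it may shuffle which end cell's particle survives inside $\st{1}(v)$ as one varies within $C$, so that $\rho_k$ need not send each open cell of $B_k^\Box(\graf')$ into a single open cell of $UK_k(\graf)$, the above bookkeeping shows that it can only decrease, never increase, the half-edge contribution to the target dimension. The long-ends hypothesis is indispensable here: without it an interior subdivision vertex lying in some $(0,1)_e$ would map under $\rho$ to a $1$-dimensional half-edge cell of $UK_k(\graf)$, so a $0$-dimensional source cell would land in a $1$-dimensional target cell, breaking cellularity.
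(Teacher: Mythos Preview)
Your argument is correct and follows the same dimension-counting strategy as the paper, but you overlook a simplification that the paper exploits. In a cell of $B_k^\Box(\graf')$ the selected cells of $\graf'$ must have pairwise disjoint \emph{closures}; since the closures of $v$ and of every end cell at $v$ all contain $v$, at most one of these can be selected. Consequently any configuration in $B_k^\Box(\graf')$ already has at most one particle in each $\st{1}(v)$, the deformation onto $\widetilde B_k(\graf)$ restricts to the identity on $B_k^\Box(\graf')$, and your concern that the map ``may shuffle which end cell's particle survives'' never arises. The paper therefore notes that each cell $\mu$ maps onto a \emph{single} cell $\lambda_\mu$ of $UK_k(\graf)$, given by an explicit formula, whose dimension is exactly your $b_1$; the inequality $b_1\le b_1+b_2=\dim\mu$ then finishes the argument. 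Your route reaches the same bound but carries an unnecessary hedge; in particular, your assertion that $\rho_k$ ``need not send each open cell of $B_k^\Box(\graf')$ into a single open cell of $UK_k(\graf)$'' is false, though harmlessly so for your purposes.
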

\begin{proof}
A cell of $B_k^\Box(\graf')$ is specified by a function $\mu$ from the set of cells of $\graf'$ to $\{0,1\}$ with the following properties:
\begin{enumerate}
\item $\sum_c\mu(c)=k$
\item if $c_i\cap c_j\neq \varnothing$, then $\mu(c_i)+\mu(c_j)\leq 1$.
\end{enumerate} The degree of the cell $\mu$ is the sum of $\mu$ over the 1-cells of $\graf'$.

By inspection, the composite in question maps the cell $\mu$ onto the cell $\lambda_\mu$ with \[\lambda_\mu(v)=\begin{cases}
v&\quad \mu(v)=1\\
h&\quad \mu(h)=1\\
\varnothing&\quad \text{otherwise.}
\end{cases}\] and $\lambda_\mu(e)$ defined by summing $\mu$ over all cells of the induced subdivision on $[1, 4]$. Since the degree of $\lambda_\mu$ is at most the degree of $\mu$, this claim implies the lemma.
\end{proof}
Write $\longends$ for the category of subdivisions of $\graf$ with long ends. Although this category is filtered, it is not convergent in the sense of \cite[Definition 2.6]{AnDrummond-ColeKnudsen:SSGBG}.
Nevertheless, we have the following.

\begin{lemma}\label{lem:long ends comparison}
For every $k\geq0$, the natural map \[\colim_{\longends}C(B_k^\Box(\graf'))\to \colim_{\subd}C(B_k^\Box(\graf'))\] is a quasi-isomorphism.
\end{lemma}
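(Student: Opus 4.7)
The plan is to establish the stronger assertion that the inclusion $\iota:\longends\hookrightarrow\subd$ is a cofinal functor — meaning that for each $\graf'\in\subd$ the slice category $\graf'/\iota$, whose objects are refinements $\graf'\to\graf''$ with $\graf''\in\longends$, is nonempty and connected. This is sufficient, because cofinal functors preserve colimits of arbitrary diagrams, and the promoted isomorphism of colimits is in particular a quasi-isomorphism.

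Two preliminary observations ease the verification. First, $\subd$ is a filtered poset: any two subdivisions $\graf'_1,\graf'_2$ admit a common refinement, obtained by uniting their sets of subdivision points edge by edge. Second, $\longends$ is closed under further refinement inside $\subd$: the condition of having long ends demands only that the parameters $1$ and $4$ on each edge of $\graf$ appear among the subdivision points, and refining a subdivision only introduces additional points.

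Nonemptiness of the slice $\graf'/\iota$ then follows by adjoining to $\graf'$ the points at parameters $1$ and $4$ on each edge of $\graf$ (where not already present) to obtain an object $\graf''\in\longends$ equipped with a canonical refinement map $\graf'\to\graf''$. Connectedness follows from the two preliminary observations: given two objects $\graf'\to\graf''_j$ ($j=1,2$) of the slice, take a common refinement $\graf''_3$ of $\graf''_1$ and $\graf''_2$ in $\subd$; by upward closure, $\graf''_3\in\longends$, yielding morphisms $\graf''_1\to\graf''_3\leftarrow\graf''_2$ in the slice under $\graf'$.

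No substantive obstacle arises: the lemma is a purely categorical cofinality statement, and all topological content is already packaged into the functoriality of $C(B_k^\Box(-))$ with respect to refinements. The only mild subtlety is to keep distinct the notions of ``convergent'' and ``cofinal'' used elsewhere in the paper — as noted in the statement, $\longends$ fails to be convergent even though it is cofinal, which is precisely what is needed here.
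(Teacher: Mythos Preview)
Your cofinality argument rests on a misreading of the definition of \emph{long ends}. The condition is not merely that the parameters $1$ and $4$ appear among the subdivision points on each edge; it is that the induced subdivision $\{0,t_1,\ldots,t_r,5\}$ has $t_1=1$ and $t_r=4$, so that \emph{all} interior subdivision points lie in $[1,4]$. With the correct reading, $\longends$ is not upward closed in $\subd$: refining a long-ends subdivision by adding a vertex at, say, $0.5$ on some edge yields a subdivision that no longer has long ends. Worse, the slice $\graf'/\iota$ is empty whenever $\graf'$ already has a subdivision point outside $[1,4]$ on some edge, since any refinement of such a $\graf'$ inherits that point. Thus $\longends\hookrightarrow\subd$ is not cofinal, and the map of colimits is not an isomorphism. (A quick check: for $k=1$ on a single edge, the degree-zero part of the colimit over $\subd$ is the free abelian group on all of $[0,5]$, while over $\longends$ one only sees the endpoints together with $[1,4]$.)

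This is exactly why the paper takes a less direct route. It first passes to the final subcategories $\longends_k\subseteq\longends$ and $\subd_k\subseteq\subd$ of subdivisions sufficiently subdivided for $k$, where the vertical comparison maps \emph{are} isomorphisms by genuine finality. Over $\subd_k$, Abrams' theorem guarantees that every transition map is a quasi-isomorphism; then, since both $\longends_k$ and $\subd_k$ are filtered (hence have contractible nerves), the colimits over each are quasi-isomorphic. The paper's remark that $\longends$ is ``not convergent'' is precisely a warning that the naive cofinality approach fails here.
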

\begin{proof}
Let $\subd_k\subseteq \subd$ denote the subcategory of subdivisions of $\graf$ that are sufficiently subdivided for $k$, and set $\longends_k=\longends\cap \subd_k$.
Since $\longends_k$ is final in $\longends$ and $\subd_k$ in $\subd$, the vertical arrows in the commuting diagram 
\[\begin{tikzcd}
\colim_{\longends_k}C(B_k^\Box(\graf'))\ar[d]\ar[r]& \colim_{\subd_k}C(B_k^\Box(\graf'))\ar[d]\\
\colim_{\longends}C(B_k^\Box(\graf'))\ar[r]& \colim_{\subd}C(B_k^\Box(\graf'))
\end{tikzcd}\]
are isomorphisms, so it suffices to show that the top arrow is a quasi-isomorphism.
By sufficient subdivision and Theorem \ref{thm:abrams}, every arrow in $\subd_k$ induces a quasi-isomorphism on $C(B_k^\Box(-))$, so the claim follows from the observation that the inclusion of $\longends_k$ in $\subd_k$ induces a weak homotopy equivalence on nerves, since both nerves are contractible. Indeed, both $\longends$ and $\subd$ are filtered, hence contractible, and final functors induce weak equivalences on nerves.
\end{proof}

\begin{lemma}\label{lem:gap comparison}
Let $A=\pi^{-1}(A_0)$ be a gap.
If $A_0\cap \{0,1\}=\varnothing$, then the natural map \[\colim_{\graf'\in\longends\cap \subd_A}C(B^\Box(\graf'\setminus A))\to \colim_{\graf'\in\subd_A}C(B^\Box(\graf'\setminus A))\] is a quasi-isomorphism.
\end{lemma}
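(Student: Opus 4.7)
The plan is to mimic closely the proof of Lemma~\ref{lem:long ends comparison}, working one weight $k$ at a time (the chain complexes split as direct sums over the weight). For each $k$, let $\subd_{A,k}\subseteq\subd_A$ denote the full subcategory of subdivisions sufficiently subdivided for $k$ in the sense that every path in $\graf'\setminus A$ between essential vertices meets at least $k-1$ edges and every loop in $\graf'\setminus A$ meets at least $k+1$ edges, and let $\longends_{A,k}=\longends\cap\subd_{A,k}$. First I would check that $\subd_{A,k}$ is final in $\subd_A$ and that $\longends_{A,k}$ is final in $\longends\cap\subd_A$, since any subdivision can be further refined to become sufficiently subdivided while preserving $A$ as a union of cells and preserving the long-ends condition.

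Next I would invoke Theorem~\ref{thm:abrams} to conclude that every morphism in $\subd_{A,k}$ induces a quasi-isomorphism on $C(B_k^\Box(-\setminus A))$. Abrams' proof is local and its deformation retraction splits along the connected components of $\graf'\setminus A$, each of which is an interval or a star graph (possibly with half-open cells), so the argument applies verbatim in this relative setting.

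The critical use of the hypothesis $A_0\cap\{0,1\}=\varnothing$ is to ensure that the inclusion $\longends_{A,k}\hookrightarrow\subd_{A,k}$ is well-defined and cofinal. Indeed, the hypothesis forces $A_0$ to be bounded away from both $0$ and $1$ inside $[0,1]$, so on every edge of $\graf$ the gap $A$ avoids both the endpoint neighborhoods $[0,1]\cup[4,5]$ (where $\pi$ vanishes) and the central interval $[2,3]$ (where $\pi$ equals $1$). Consequently, any subdivision in $\subd_{A,k}$ can be refined by adjoining the points $t=1$ and $t=4$ on each edge without altering which cells comprise $A$, yielding a refinement in $\longends_{A,k}$.

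Finally, both $\longends_{A,k}$ and $\subd_{A,k}$ are filtered (closed under common refinement within the respective constraints), so their nerves are contractible. Combined with the fact that all transition maps are quasi-isomorphisms, this shows that the induced map on colimits is a quasi-isomorphism, exactly as in Lemma~\ref{lem:long ends comparison}. I expect the main obstacle to be the careful adaptation of Abrams' deformation retract to the components of $\graf'\setminus A$, whose boundary behavior differs slightly from that of a closed graph; once that is checked, the rest of the argument is formal.
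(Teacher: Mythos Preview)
Your approach mirrors the paper's: restrict to sufficiently subdivided subcategories, invoke Abrams to get quasi-isomorphisms on all arrows, and conclude via contractibility of both nerves. The paper's proof is just a two-sentence pointer back to Lemma~\ref{lem:long ends comparison}, and your expansion of that pointer is essentially right.

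There is one genuine mistake. In your third paragraph you assert that $\longends_{A,k}\hookrightarrow\subd_{A,k}$ is cofinal because ``any subdivision in $\subd_{A,k}$ can be refined by adjoining the points $t=1$ and $t=4$ \ldots\ yielding a refinement in $\longends_{A,k}$.'' This misreads the definition of long ends: the condition is that $t_1=1$ and $t_r=4$ are the \emph{first and last} interior subdivision points on each edge, not merely that $1$ and $4$ occur among them. A subdivision in $\subd_{A,k}$ with a point at, say, $t=0.5$ on some edge admits no refinement with long ends whatsoever, so the inclusion is not cofinal. Fortunately your argument does not actually need this: your fourth paragraph already gives the correct replacement (both $\longends_{A,k}$ and $\subd_{A,k}$ are filtered, hence have contractible nerves, so the inclusion induces a weak equivalence on nerves), and this is exactly what the paper uses.

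The hypothesis $A_0\cap\{0,1\}=\varnothing$ is therefore not buying you cofinality between the two columns; its role is in your first paragraph, where you need $\longends_{A,k}$ to be nonempty and final in $\longends\cap\subd_A$. Your geometric observation that $A$ then avoids $[0,1]\cup[4,5]$ on every edge is what guarantees that, starting from a long-ends subdivision with $A$ a union of cells, one can freely refine inside $(1,4)$ to achieve sufficient subdivision while preserving both the long-ends constraint and the cell structure of $A$. Move that reasoning to paragraph one and drop the cofinality claim, and the proof is complete.
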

\begin{proof}
The category of subdivisions of $\graf'\setminus A$ that are sufficiently subdivided for a fixed $k$ is final in the category of subdivisions restricted from $\subd$.
Because $A_0\cap \{0,1\}=\varnothing$, the same is true for $\longends$, so the claim follows in the manner of Lemma \ref{lem:long ends comparison}.
\end{proof}

\begin{lemma}\label{lem:multi-basic compatibility}
Suppose that $\graf\to \graf'$ has long ends, and let $A$ be a gap in $\graf$ that is a union of cells of $\graf'$.
The following diagram of chain maps commutes:
\[\begin{tikzcd}
C(B^\Box(\graf'\setminus A))\ar[d]\ar[r]&C(B^\Box(\graf'))\ar[r,"C(\rho)"]&C(UK(\graf))\ar[d,equal,"\wr"]\\
\starreduced{\graf\setminus A}\ar[rr]&&S(\graf).
\end{tikzcd}\]
\end{lemma}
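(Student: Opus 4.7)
The plan is to verify the diagram commutes by checking equality on a generating cell $\mu$ of $C(B^\Box(\graf'\setminus A))$, since both composites are chain maps. Such a $\mu$ is specified by a $\{0,1\}$-valued function on cells of $\graf'\setminus A$ whose support consists of cells with pairwise disjoint closures.

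First I would trace the upper composite on $\mu$. By Lemma \ref{lem:cellularity}, $C(\rho)$ is cellular, and the image of $\mu$ is either the cell $\lambda_\mu\in A(\graf)$ described there or zero---the latter occurring exactly when some $1$-cell of $\mu$ lies in the open interior $(1,4)\subseteq e$ of an edge of $\graf$, because in that case $\rho$ factors through a cell of strictly lower cubical dimension. When nonzero, the isomorphism of Observation \ref{obs:swiatkowski chains} identifies the image with
\[
\Big(\prod_{e\in E}e^{\lambda_\mu(e)}\Big)\otimes \bigotimes_{v\in V}\lambda_\mu(v)\in S(\graf),
\]
where $\lambda_\mu(v)$ records the occupancy of $v$ and its incident half-edges (present as unit-length $1$-cells of $\graf'$ precisely thanks to the long-ends hypothesis) and $\lambda_\mu(e)$ counts the $0$-cells of $\mu$ lying in $[1,4]\cap e$.

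Next I would unpack the lower composite using the local factorization of $\starreduced{-}$ recalled in Section \ref{section:functorial model}: the map $C(B^\Box(\graf'\setminus A))\to \starreduced{\graf\setminus A}$ is assembled from a star chain map at each vertex together with a chain map at each edge interior. The target $\starreduced{e}\cong\mathbb{Z}[e]$ is concentrated in homological degree zero, so the local map on an edge annihilates any contribution containing a $1$-cell inside $[1,4]\cap e$ and otherwise returns $e^n$, where $n$ is the particle count on that piece. At each vertex, long ends pins down the cells of $\graf'$ inside $\st{1}(v)$ to be exactly $v$ and the half-edges $H(v)$, and the local map sends $\mu$ to $\varnothing$, $v$, or $h$ according to the (at most one) occupied local cell. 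Each local output agrees with the corresponding tensor factor of the displayed element, and the inclusion $\starreduced{\graf\setminus A}\hookrightarrow S(\graf)$ just reassembles these factors; the two composites therefore agree on $\mu$ up to sign.

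The main obstacle is the matching of signs: the cellular orientation on $\mu$ inherited from its constituent $1$-cells of $\graf'$ must be compared with the Koszul sign conventions in $S(\graf)$, where half-edge generators are of odd degree. Under long ends, the $1$-cells of $\mu$ that survive to contribute to cubical dimension correspond bijectively to the half-edges indexing $\lambda_\mu^{-1}(H)$, so fixing one common total order on this set makes the two conventions coincide. With signs matched, commutativity on $\mu$ is immediate, and the lemma follows.
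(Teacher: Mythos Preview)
Your approach is essentially the same as the paper's: verify commutativity by tracing both composites on a generating cell $\mu$ using the explicit formula from Lemma \ref{lem:cellularity} for the upper route and the local description of the left vertical map for the lower route. The paper's own proof is a one-sentence pointer to exactly these two ingredients (Lemma \ref{lem:cellularity} and \cite[Definitions 4.13 and 4.15]{AnDrummond-ColeKnudsen:SSGBG}), so you have simply spelled out what ``immediate'' means; your added discussion of orientations parallels the remark following Observation \ref{obs:swiatkowski chains}.

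One minor imprecision worth tightening: the star component of $\graf\setminus A$ around $v$ need not coincide with $\st{1}(v)$---it is generally a larger star whose rays extend to the nearest boundary point of $A$. Consequently your description of the local vertex map should also record the $\mathbb{Z}[E(\stargraph{d(v)})]$-coefficient coming from $0$-cells of $\mu$ on those rays, and should send to zero any $1$-cell in the star component that is not the one adjacent to $v$. Once you include these, the match with $\lambda_\mu$ is exact, since the ray edges of the star map to the corresponding edges of $\graf$ under $\starreduced{\graf\setminus A}\to S(\graf)$.
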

\begin{proof}
The claim is immediate from the explicit description of the value of $\rho$ on cells given in Lemma \ref{lem:cellularity} and the description of the lefthand vertical map given in \cite[Definitions 4.13 and 4.15]{AnDrummond-ColeKnudsen:SSGBG}.
\end{proof}

\subsection{Proof of Proposition \ref{prop:swiatkowski comparison}}
We wish to compare two isomorphisms identifying $H_*(B(\graf))$ with $H_*(S(\graf))$.
The first is induced on homology by the zig-zag of quasi-isomorphisms in the righthand portion of the diagram of Figure~\ref{figure:too-big-diagram}, in which each of the square subdiagrams commutes.
\begin{figure}[ht]
\[
\begin{tikzcd}[column sep=1.5em, row sep=1.2em]
\displaystyle\colim_{\graf'\in\longends} C^\sing(B^\Box(\graf'))
\ar[r]
&
\displaystyle\colim_{\graf'\in \subd} C^\sing(B^\Box(\graf'))
\ar[r,"\sim"]
&
C^\sing(B(\graf))
\\
\bullet
\ar[u,"(1)"]
\ar[d,"(2)"']
\ar[r]
& 
\bullet
\ar[u,"\wr"']
\ar[d,"\wr"]
\\
\displaystyle\colim_{\graf'\in\longends}C(B^\Box(\graf'))
\ar[r,"(3)"]
&
\displaystyle \colim_{\graf'\in\subd}C(B^\Box(\graf'))
\\
\displaystyle\hocolim_{A\in \gaps^{op}}\colim_{\graf'\in\longends\cap \subd_A}C(B^\Box(\graf'\setminus A))
\ar[r,"(4)"]
\ar[u]
&
\displaystyle\hocolim_{A\in \gaps^{op}}\colim_{\graf'\in\subd_A}C(B^\Box(\graf'\setminus A))
\ar[d,"\wr"]
\ar[u,"\wr"']
\\
&
\displaystyle\hocolim_{A\in \gaps^{op}}\starreduced{\graf\setminus A}
\ar[r,"\sim"]
&
S(\graf)
\end{tikzcd}
\]
\caption{A commutative diagram of quasi-isomorphisms for the proof of Proposition \ref{prop:swiatkowski comparison}}
\label{figure:too-big-diagram}
\end{figure} 
The first step in the proof is to verify that \emph{all} of the maps in the diagram are quasi-isomorphisms, so that we may replace this zig-zag with the outer zig-zag in the diagram. 
\begin{enumerate}
\item[(1)--(2)] These maps are induced by natural quasi-isomorphisms after taking the colimit over $\longends$.
Since $\longends$ is filtered, the claim follows.
\item[(3)] This quasi-isomorphism is supplied by Lemma \ref{lem:long ends comparison}.
\item[(4)] This quasi-isomorphism follows from Lemma \ref{lem:gap comparison} and the observation that the poset of gaps satisfying the hypotheses of that lemma is homotopy initial in $\gaps$ (hence homotopy final in $\gaps^{op}$).
\end{enumerate} The remaining arrows are quasi-isomorphisms by two-out-of-three.

The second isomorphism $H_*(B(\graf))\to H_*(S(\graf))$ is induced by the map $\rho:B(\graf)\to UK(\graf)$ of topological spaces, together with the identification of $S(\graf)$ with cellular chains on $UK(\graf)$.
In order to compare this isomorphism with the previous, we note that Lemma \ref{lem:cellularity} supplies the dashed fillers in the commuting diagram 
\[\begin{tikzcd}[column sep=1.5em, row sep=1.2em]
C^\sing(B^\Box(\graf'))\ar[r]&C^\sing(B(\graf))\ar[r,"\sim"]&C^\sing(UK(\graf))\\
\bullet\ar[rr,dashed]\ar[u,"\wr"]\ar[d,"\wr"']&&\bullet\ar[u,"\wr"']\ar[d,"\wr"]\\
C(B^\Box(\graf'))\ar[rr,dashed]&&C(UK(\graf))
\end{tikzcd}\] whenever $\graf\to \graf'$ has long ends.
Passing to the colimit over $\longends$, we obtain the diagram
\[\begin{tikzcd}[column sep=1.5em, row sep=1.2em]
\displaystyle\colim_{\graf'\in\longends} C^\sing(B^\Box(\graf'))\ar[r,"\sim"]&C^\sing(B(\graf))\ar[r,"\sim"]&C^\sing(UK(\graf))\\
\bullet\ar[u,"\wr"]\ar[d,"\wr"']\ar[rr]&&\bullet\ar[u,"\wr"']\ar[d,"\wr"]\\
\displaystyle\colim_{\graf'\in\longends} C(B^\Box(\graf'))\ar[rr]&&C(UK(\graf))\ar[dd,equal,"\wr"]\\
\displaystyle\hocolim_{A\in \gaps^{op}}\colim_{\graf'\in\longends\cap \subd_A}C(B^\Box(\graf'\setminus A))\ar[d,"\wr"']\ar[u,"\wr"]\\
\displaystyle\hocolim_{A\in \gaps^{op}}\starreduced{\graf\setminus A}\ar[rr,"\sim"]&&S(\graf).
\end{tikzcd}\]
The upper portion of the diagram commutes by what has already been said, and the bottom portion of the diagram commutes by Lemma \ref{lem:multi-basic compatibility} and the universal properties of the colimit and the homotopy colimit.
We established in the first half of the proof that the first of the isomorphisms in question is induced on homology by the counterclockwise zig-zag from $C^\sing(B(\graf))$ to $S(\graf)$.
Since the clockwise zig-zag induces the second of the isomorphisms, the proof is complete.

\bibliographystyle{gtart}
\bibliography{references}
\end{document}